\theoremstyle{plane}
\newtheorem{theorem}{\bf Theorem}[section]
\newtheorem{proposition}[theorem]{\bf Proposition}            
\newtheorem{corollary}[theorem]{\bf Corollary}                
\newtheorem{lemma}[theorem]{\bf Lemma}
\newtheorem{remark}[theorem]{\bf Remark}
\newtheorem{fact}[theorem]{\bf Fact}
\newtheorem{definition}[theorem]{\bf Definition}
\renewcommand{\phi}{\varphi}
\numberwithin{equation}{section}
\title[Classifications of cusps appearing on plane curves]
{Classifications of cusps appearing on plane curves}
\author[Y. Matsushita]{Yoshiki Matsushita}
\address{Graduate school of Mathematics, Kyushu University,  744 Motooka, Nishi-ku,  Fukuoka 819-0395, Japan}
\email{matsushita.yoshiki.297@s.kyushu-u.ac.jp}
\thanks{This work was supported by JST SPRING Grant Number JPMJSP2136.}
\subjclass[2020]{Primary 58K40; Secondary 58K05, 58C27}
\keywords{front, cusp, plane curve, $\mathcal{A}$-equivalent. }
\date{\today}
\begin{document}

\begin{abstract}
In this paper, we deal with plane curves with cusps. It is well known that there are various types of cusps. Among them, we investigate criteria for $(n, n+1)$ cusps with respect to several differential conditions and relations between these singularities and evolutes of fronts. We give complete classifications with respect to $(4, 5)$-cusps.
\end{abstract}
\maketitle

\section{\bf Introduction}\label{sec:Introduction}
One of the most basic singularities that appear in plane curves are cusps. A plane curve $\gamma$ is $(n, m)$-cusp at $t=0$ if $\gamma$ is $\mathcal{A}$-equivalent to $Cusp_{(n,m)}(t) = (t^n, t^m)$ at $t=0$. $(n,m)$-cusps are known to have applications to optical systems (\cite{Pei and Zhang}). In general, it is difficult to examine the types of $(n,m)$-cusps in the sense of $\mathcal{A}$-equivalence. However, simple criteria for $(2, 3)$-cusps, $(2, 5)$-cusps, $(2, 7)$-cusps, $(3, 4)$-cusps and $(3, 5)$-cusps are known (\cite[Theorem 1.3.2]{SUY1}, \cite[Theorem 1.23]{Porteous},  \cite[Theorem A.1]{Honda}, \cite[Theorem 1.3.4]{SUY1} and \cite[Fact 2.1]{MSST02}). Namely,

\begin{fact}\label{fact:(2,3)cusp and (2,5)cusp and (2,7)cusp and (3,4)cusp and (3,5)cusp}
Let $\gamma : I \subset \mathbb{R} \rightarrow \mathbb{R}^2$ be a $C^{\infty}$ curve.  
\begin{itemize}
\item[(1)] $\gamma$ is $\mathcal{A}$-equivalent to $Cusp_{(2, 3)}(t) = (t^{2}, t^{3})$ at $t = 0$ if and only if $\gamma'(0) = \bold{0}$ and $\det{(\gamma''(0), \gamma'''(0))} \neq 0.$

\item[(2)] $\gamma$ is $\mathcal{A}$-$equivalent$ to $Cusp_{(2, 5)}(t) = (t^2, t^5)$ at $t = 0$ if and only if $\gamma'(0) = \bold{0}$, $\det{(\gamma''(0), \gamma'''(0))} = 0$ and 

\begin{equation*}
3 \det{(\gamma''(0), \gamma^{(5)}(0) )} \gamma''(0) - 10 \det{(\gamma''(0), \gamma^{(4)}(0))} \gamma'''(0) \neq \bold{0}. 
\end{equation*}

\item[(3)] $\gamma$ is $\mathcal{A}$-$equivalent$ to $Cusp_{(2, 7)}(t) = (t^2, t^7)$ at $t = 0$ if and only if there exist real numbers $k, l \in \mathbb{R}$ such that $\gamma'(0) = \bold{0}$, $\gamma''(0) \neq \bold{0}$, $\gamma'''(0) = k \gamma''(0)$, $\gamma^{(5)}(0) - \dfrac{10}{3} k \gamma^{(4)} (0) = \ell \gamma''(0)$ and

\begin{align*}
\det{\Bigl( \gamma''(0), \gamma^{(7)}(0)  - 7k \gamma^{(6)} (0) - \Bigl(  7 \ell - \dfrac{70}{3} k^3 \Bigr) \gamma^{(4)}(0) \Bigr) } \neq 0.
\end{align*}

\item[(4)] $\gamma$ is $\mathcal{A}$-equivalent to $Cusp_{(3, 4)}(t) = (t^{3}, t^{4})$ at $t = 0$ if and only if $\gamma'(0) = \gamma''(0)= \bold{0}$ and $\det{(\gamma'''(0), \gamma^{(4)}(0))} \neq 0$.

\item[(5)] $\gamma$ is $\mathcal{A}$-equivalent to $Cusp_{(3, 5)}(t) = (t^{3}, t^{5})$ at $t = 0$ if and only if $\gamma'(0) = \gamma''(0) = \bold{0}$, $\det{(\gamma'''(0), \gamma^{(4)}(0))} = 0$ and $\det{(\gamma'''(0), \gamma^{(5)}(0))} \neq 0$. 

\end{itemize}
\end{fact}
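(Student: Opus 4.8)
The plan is to prove the five equivalences uniformly, in three stages: reduce $\gamma$ to a \emph{prenormal form}, translate the differential hypotheses into a statement about the leading exponent of the reduced second component, and then remove the remaining tail. \emph{Stage 1 (prenormal form).} Recall that $\mathcal{A}$-equivalence is generated by reparametrizations of the source and diffeomorphism germs of the target. In each item the hypotheses, after rewriting the determinant equalities as ``$\gamma'''(0)$ is proportional to $\gamma''(0)$'', etc., force $\gamma$ to have multiplicity exactly $n\in\{2,3\}$, i.e.\ $\gamma^{(j)}(0)=\bm{0}$ for $j<n$ and $\gamma^{(n)}(0)\neq\bm{0}$. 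I would then choose a linear coordinate system on $\mathbb{R}^2$ whose first basis vector is not orthogonal to $\gamma^{(n)}(0)$; the first component of $\gamma$ is then $t^{n}u(t)$ with $u(0)\neq0$, and the source reparametrization $s=t\,u(t)^{1/n}$ (admissible, after a reflection of the target when $n$ is even and $u(0)<0$) brings $\gamma$ into the form $(s^{n},h(s))$. Target changes $(x,y)\mapsto(x,\,y-\psi(x))$ then delete from $h$ every monomial $s^{jn}$, and since $\gamma'(0)=\bm{0}$ is preserved by both kinds of change we are left with $h(s)=\sum_{k>n,\ n\nmid k}a_{k}s^{k}$.

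\emph{Stage 2 (locating the leading exponent).} The candidate value of $m$ is the least $k$ with $a_{k}\neq0$, and the substance of the Fact is to express ``$a_{k}=0$ for $n<k<m$ and $a_{m}\neq0$'' in terms of the derivatives $\gamma^{(j)}(0)$. This is a bookkeeping computation: one propagates the linear change, the $n$-th root reparametrization $s=t\,u(t)^{1/n}$, and the $\psi$-subtractions through the Taylor coefficients. The exponent of the leading monomial surviving in $h$ is governed by the component of $\gamma^{(k)}(0)$ transverse to $\gamma^{(n)}(0)$, and this transverse component is a nonzero multiple of $\det(\gamma^{(n)}(0),\gamma^{(k)}(0))$; this is why $\det(\gamma'',\gamma''')\neq0$ yields $m=3$ in (1) and $\det(\gamma''',\gamma^{(4)})\neq0$ yields $m=4$ in (4). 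When the first determinant vanishes, the corresponding $a_{k}$ is zero and one passes to the next admissible exponent, and the interaction of the reparametrization with the previously-killed monomials is exactly what produces the constants $3$ and $10$ in (2), the vector relation $\gamma^{(5)}(0)-\tfrac{10}{3}k\,\gamma^{(4)}(0)=\ell\,\gamma''(0)$ together with the coefficients $7k$ and $7\ell-\tfrac{70}{3}k^{3}$ in (3), and the analogous (but shorter, because the relevant corrections are parallel to $\gamma'''(0)$ and hence get absorbed) reduction in (5).

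\emph{Stage 3 (removing the tail).} Once $\gamma$ is in the form $(s^{n},\,a_{m}s^{m}+\cdots)$ with $a_{m}\neq0$, one rescales $y$ and applies one further pair of coordinate changes of the type used in Stage~1. For $n=2$ this is completely explicit: the surviving second component is then $s^{m}v(s^{2})$ with $v(0)\neq0$; putting $\sigma=s\,v(s^{2})^{1/m}$ turns it into $\sigma^{m}$, and the first component, now $\sigma^{2}\rho(\sigma^{2})$ with $\rho(0)\neq0$, is straightened by a target change $(x,y)\mapsto(\theta(x),y)$, giving exactly $Cusp_{(n,m)}$. Equivalently, $Cusp_{(2,3)}$, $Cusp_{(2,5)}$, $Cusp_{(2,7)}$, $Cusp_{(3,4)}$, $Cusp_{(3,5)}$ are the simple plane-curve singularities $A_{2}$, $A_{4}$, $A_{6}$, $E_{6}$, $E_{8}$, hence finitely $\mathcal{A}$-determined with no moduli, so the tail can be absorbed. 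The converse directions need no extra work: the chain of equivalences in Stages~1--2 is reversible, and one checks directly that $Cusp_{(n,m)}$ satisfies the conditions listed in its item.

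I expect Stage~2 to be the real obstacle. The $(2,7)$-case is the worst: two successive root reparametrizations and several $\psi$-subtractions must be composed in the right order, and it is precisely there that the awkward constants $\tfrac{10}{3}k$ and $7\ell-\tfrac{70}{3}k^{3}$ arise; verifying that the intermediate vanishing conditions are exactly those stated, with no spurious extra constraint hidden among the higher-order terms, is the delicate point. Stage~1 is routine, and Stage~3 is either a short explicit computation as above or a citation to the classification of simple plane-curve singularities.
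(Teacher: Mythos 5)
First, note that the paper itself offers no proof of this statement: it is quoted as a known Fact with citations (to Saji--Umehara--Yamada, Porteous, Hattori--Honda--Morimoto, and Martins--Saji--dos Santos--Teramoto), so there is no internal argument to compare yours against. Judged on its own terms, your three-stage outline is the standard strategy, and it is in fact the very scheme the paper executes in Sections 3--4 for the $(n,n+1)$ and $(4,5)$ cases (prenormal form via the division lemma, invariance computations via Fa\`a di Bruno, tail absorption via Whitney's lemma and reparametrization). For items (1) and (4) your sketch is essentially a complete proof. But for items (2), (3) and (5) what you call Stage~2 --- converting ``$a_k=0$ for $n<k<m$ and $a_m\neq 0$'' into the displayed determinant conditions, with the specific constants $3$, $10$, $\tfrac{10}{3}k$, $7k$, $7\ell-\tfrac{70}{3}k^3$ --- \emph{is} the content of the statement, and you explicitly do not carry it out. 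Asserting that the constants ``arise from the interaction of the reparametrization with the previously-killed monomials'' identifies where they come from but does not establish that the stated conditions are equivalent to the normal-form conditions; without that computation nothing has been proved.

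Two further points need repair even at the level of the outline. First, ``the converse directions need no extra work'' is not right: to conclude that any curve $\mathcal{A}$-equivalent to, say, $Cusp_{(2,5)}$ satisfies the displayed vector condition, you must show that condition is invariant under arbitrary source reparametrizations and target diffeomorphisms (it is not enough to check it on $Cusp_{(2,5)}$ itself). That invariance is a Fa\`a di Bruno computation of the same order of difficulty as the forward direction --- compare the paper's Lemmas 3.3--3.4 and Proposition 4.2, which do exactly this work for the conditions the paper introduces. Second, in Stage~3 you propose to absorb the tail by citing the Bruce--Gaffney classification of simple germs; that classification is over $\mathbb{C}$, and real and complex $\mathcal{A}$-classification genuinely differ --- the main theorem of this very paper is that $(t^4,t^5+t^7)$ and $(t^4,t^5-t^7)$ are complex- but not real-$\mathcal{A}$-equivalent. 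For $A_{2k}$, $E_6$ and $E_8$ the discrepancy happens not to occur, but that must be argued (or the citation replaced by the explicit computation you give for $n=2$ together with its $n=3$ analogue, which for $(3,4)$ and $(3,5)$ also requires a $\mathbb{Z}/3$-version of Whitney's lemma, analogous to the paper's Lemma 4.8, that you assert but do not justify).
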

It is known that $(2,3)$-cusp appears on cycloids and asteroids and $(3,4)$-cusp appears on parallel curves of parabolas (cf.\cite{SUY1}). Moreover, $(2, 3)$-cusp is the generic singularity of fronts. Looking at the criteria for $(2, 3)$-cusps and $(3, 4)$-cusps stated in Fact \ref{fact:(2,3)cusp and (2,5)cusp and (2,7)cusp and (3,4)cusp and (3,5)cusp}, it seems that they can be naturally extended to criteria for $(n, n+1)$ cusps. However, this does not work in the case of $(4, 5)$-cusps. For example, $(t^4, t^5 + t^7)$ , $(t^4, t^5 - t^7)$ and $(t^4, t^5)$ are not $\mathcal{A}$-equivalent at $t=0$. Namely, situations of $(4, 5)$-cusps are different from the cusps whose criteria have already been obtained, and are more complicated because they require careful treatments of information of larger orders. Moreover, we are also interested in how far $(n,n+1)$-cusps can be classified by the naturally extended conditions just mentioned. 

On the other hand, $(n, n+1)$-cusps are wave fronts. Fukunaga and Takahashi showed that if we consider an evolute of a wave front which does not have inflection points, then a singular point of the initial front corresponds to a singularity of an evolute (\cite[Proposition $5.2$ $(1)$ and $(3)$]{Fukunaga and Takahashi}). 
By using this fact and  invariants of $(n, n+1)$-cusps, if we perform the operation of considering the $(n-1)$-th evolute for a plane curve with a $(n, n+1)$ cusp, then it becomes a regular curve. 
Hence, $(n,n+1)$-cusps can be recognized as singularities which become regular points when we consider the $(n-1)$-th evolutes of the fronts (Theorem \ref{coro:properties of (n,n+1)cusps}). 

In this paper, we newly introduce an invariant $\kappa_{q}$ of $(4, 5)$-cusp other than the $(4, 5)$-cuspidal curvature (Definition \ref{definition:new (4,5) cusipdal}). 
For the {\it $(n, n+1)$-cuspidal curvature}, see (\cite[Appendix A]{MSST01}). This new invariant $\kappa_{q}$ is called the {\it $(4, 5; \pm 7)$-cuspidal curvature}. 
In Section \ref{sec:Classification of $(n, n+1)$ cusp in $C^{1}$ class diffeomorphism}, we give criteria for $(n, n+1)$-cusps in the sense of $C^{1}$-equivalence  (Theorem \ref{thm:C1-diffeo-classification}). 
This corresponds to the classifications which can be done by the natural extension mentioned at the beginning of this paper. 
Moreover, we prove properties of relations between $(n, n+1)$-cusps and evolutes of fronts (Theorem \ref{coro:properties of (n,n+1)cusps}). 
Furthermore, we give negative criteria for $(n,n+1)$-cusps in the sense of $\mathcal{A}$-equivalence by using properties of evolutes of fronts and invariants of $(n,n+1)$-cusps (Corollary \ref{coro:negative criterion for (n,n+1)-cusps}).
In Section \ref{sec:criterion for (4, 5)cusp}, we give criterion for $(4, 5)$-cusps (Theorem \ref{thm:criterion for (4, 5)cusp}). 
Considering the $(4, 5; \pm 7)$-cuspidal curvature $\kappa_{q}$, we obtain the complete classifications of $(t^4, t^5 + t^7)$ , $(t^4, t^5 - t^7)$ and $(4, 5)$-cusps (Theorem \ref{thm: complete classifications of (4, 5)cusps }).

\section{\bf Preliminaries}
We recall some notions which we need in the following sections. 

\subsection{Fronts and singularities}Let $I$ be an open interval containing the origin of $\mathbb{R}$ and $\gamma : I \rightarrow \mathbb{R}^2$ be a $C^{\infty}$ map. A point $p$ is called a \textit{singular point} of $\gamma$ if $\gamma'(p) = \bold{0}$. We say that the curve $\gamma$ is a \textit{front} if there exists a $C^{\infty}$ map $\nu : I \rightarrow \mathbb{R}^2$ such that the pair $(\gamma, \nu) : I \rightarrow \mathbb{R}^2 \times \it{S}^1$ is a \textit{Legendre immersion}, namely, $\gamma'(t) \cdot \nu(t) = 0$ for each $t \in I$ and $(\gamma, \nu)$ is an immersion. For properties of fronts, see \cite{Fukunaga and Takahashi, Fukunaga and Takahashi 2, SUY1, SUY2}.

\begin{definition}\label{def:A-equivalent}
Let $\gamma_{1}, \gamma_{2} : (\mathbb{R}, 0) \rightarrow (\mathbb{R}^2, \bold{0})$ be $C^{\infty}$ map germs. We say that $\gamma_{1}$ and $\gamma_{2}$ are $C^{r}$-$equivalent$ if there exist $C^{r}$ diffeomorphism germs $\psi : (\mathbb{R}, 0) \rightarrow (\mathbb{R}, 0)$ and $\Psi : (\mathbb{R}^2, \bold{0}) \rightarrow (\mathbb{R}^2, \bold{0})$ such that $\Psi \circ \gamma_{1} \circ \psi^{-1} = \gamma_{2}$ holds, where $r \in \mathbb{N}_{> 0} \cup \{ \infty, \omega \}$. In particular, if $r = \infty$, we call it $\mathcal{A}$-$equivalent$.
\end{definition}

In Section\ref{sec:Introduction}, we introduced criteria for several cusps. Moreover, the following facts are known. 

\begin{fact}(\cite{Ishikawa})\label{fact:criterion for (2,n) cusp}
Let $\gamma : I \rightarrow \mathbb{R}^2$ be a $C^{\infty}$ curve. If  $\gamma$ satisfies the conditions $\gamma'(0) = \bold{0},\gamma''(0) \neq \bold{0}, \gamma'''(0) = \cdots = \gamma^{(n-1)}(0) = \bold{0}$ and 

\begin{align}
\det{(\gamma''(0), \gamma^{(n)}(0))} \neq 0, 
\end{align}
then $\gamma$ is $\mathcal{A}$-$equivalent$ to $Cusp_{(2, n)}(t) = (t^2, t^n)$ at $t = 0$, where $n$ is an odd number greater than or equal to $9$.
\end{fact}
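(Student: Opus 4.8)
The plan is to prove Fact~\ref{fact:criterion for (2,n) cusp} by reducing the stated differential conditions to a normal form through successive changes of coordinates in the source and target. First I would use a source reparametrization $\psi$ to simplify $\gamma$. Since $\gamma'(0) = \bm{0}$ and $\gamma''(0) \neq \bm{0}$, after an affine change in the target we may assume $\gamma''(0) = (2,0)$; then writing $\gamma(t) = (x(t), y(t))$ the hypotheses $\gamma'''(0) = \dots = \gamma^{(n-1)}(0) = \bm{0}$ force the Taylor expansions $x(t) = t^2 + O(t^n)$ and $y(t) = a t^n + O(t^{n+1})$ with $a \neq 0$ by the nondegeneracy condition $\det(\gamma''(0), \gamma^{(n)}(0)) \neq 0$. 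The first goal is to arrange $x(t) = t^2$ exactly: since $x(t) = t^2(1 + h(t))$ with $h$ smooth and $h(0) = \dots = h^{(n-2)}(0) = 0$, the substitution $s = t\sqrt{1 + h(t)}$ is a smooth diffeomorphism germ (here the evenness of $t^2$ and smoothness of the square root near $1$ are used), so after reparametrizing we may assume $\gamma(t) = (t^2, y(t))$ with $y(t) = a t^n + (\text{higher order})$ and $a \neq 0$.

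Next I would kill the tail of $y$. Because $\gamma(t) = (t^2, y(t))$ is now even in the first slot, I would split $y(t) = y_{\mathrm{even}}(t) + y_{\mathrm{odd}}(t)$ into its even and odd parts. The even part $y_{\mathrm{even}}(t) = g(t^2)$ for a smooth germ $g$ with $g(0) = 0$, so the target diffeomorphism $\Psi(X, Y) = (X, Y - g(X))$ removes it entirely while fixing the first coordinate, i.e.\ we may assume $y$ is odd: $y(t) = t \cdot f(t^2)$ with $f$ smooth, and the vanishing conditions on derivatives translate into $f(0) = \dots = f^{((n-3)/2 - 1)}(0) = 0$ (using that $n$ is odd) together with $f^{((n-1)/2)}(0) \neq 0$ — so $y(t) = t^{n}\, u(t^2)$ with $u(0) \neq 0$, say $u(0) > 0$ after possibly flipping the sign of the $Y$-axis. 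The hard part is the final step: we want a further change of coordinates turning $(t^2, t^n u(t^2))$ into $(t^2, t^n)$.

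For that last step I would look for a source change $t \mapsto t \cdot \phi(t^2)$ (to preserve oddness of the second component and evenness of the first) together with a target change fixing the $X$-axis. Under $t \mapsto t\,\phi(t^2)$ the curve becomes $(t^2\phi(t^2)^2, t^n\phi(t^2)^n u(t^2\phi(t^2)^2))$; setting $w = t^2$ and composing with $\Psi(X,Y) = (X, Y/\phi(w)^2 \cdot (\dots))$ is awkward because $\Psi$ must be a genuine diffeomorphism of $(\mathbb{R}^2,\bm0)$ depending only on $X$. The cleaner route, which I expect to be the real content cited from Ishikawa, is to write the problem as: given $y(t) = t^n u(t^2)$ with $u(0) = 1$, find smooth $\phi$ with $\phi(0) = 1$ such that after the substitution $x = t\phi(t^2)$ one has, in the new parameter, $x^2 = t^2\phi(t^2)^2 =: \sigma$ and $y = \sigma^{(n)/?}\cdot(\text{unit})$ — here one must solve a functional equation for $\phi$; the point where $n$ odd and $n \geq 9$ enters is that $\gcd$ and divisibility conditions on the exponents guarantee the semigroup generated by $2$ and $n$ contains every integer $\geq n-1$, so every monomial $t^{n+2j}$ in the expansion of $y$ can be absorbed either by a target change that depends only on $X = t^2$ or by the source change. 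I would invoke the Mather–Gaffney type finiteness / the explicit computation in \cite{Ishikawa} to conclude that the residual moduli vanish precisely because the missing intermediate terms $\gamma'''(0), \dots, \gamma^{(n-1)}(0)$ were assumed zero, so no obstructions (of the kind that make $(t^4, t^5 \pm t^7)$ inequivalent) survive. I would then remark that this is exactly why the analogous clean statement fails for $(4,5)$-cusps, motivating the finer invariant $\kappa_q$ introduced later.

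Thus the skeleton is: (i) normalize $\gamma''(0)$ and use a square-root reparametrization to get first coordinate $t^2$; (ii) remove the even part of the second coordinate by a target shear fixing $X$; (iii) reduce to $(t^2, t^n u(t^2))$ with $u(0) = 1$; (iv) solve the functional equation for the remaining source/target change, invoking \cite{Ishikawa} for the fact that the hypothesis $n$ odd, $n \geq 9$, together with the vanishing of the intermediate derivatives, kills all moduli. Step (iv) is the main obstacle and is the part genuinely due to Ishikawa; the earlier steps are the routine reductions sketched above.
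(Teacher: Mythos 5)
The paper does not actually prove this statement; it is quoted as a Fact from \cite{Ishikawa}, so there is no internal proof to compare against. Judged on its own terms, your reduction through step (iii) is correct: normalizing $\gamma''(0)=(2,0)$, writing $x(t)=t^2(1+h(t))$ with $h$ flat to order $n-3$ and reparametrizing by $s=t\sqrt{1+h(t)}$, then shearing away the even part $g(X)$ of the second coordinate, and using the oddness of what remains together with $n$ odd to write the curve as $(s^2, s^n u(s^2))$ with $u(0)\neq 0$. (There is a small indexing slip in the order of vanishing you assign to $f$ --- it should vanish to order $(n-1)/2$, i.e.\ $f(0)=\cdots=f^{((n-3)/2)}(0)=0$ --- but this does not affect the conclusion $y=s^nu(s^2)$.)

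The genuine problem is step (iv), which you single out as ``the main obstacle'' and leave entirely to the citation, gesturing at a functional equation for a source change $t\mapsto t\phi(t^2)$ and at semigroup/determinacy arguments. None of that is needed, and your stated worry that the target change ``must be a genuine diffeomorphism of $(\mathbb{R}^2,\bm{0})$ depending only on $X$'' is a red herring: a target diffeomorphism may of course involve $Y$. Once the curve is $(s^2, s^n u(s^2))$ with $u$ a smooth germ and $u(0)\neq 0$, the map $\Psi(X,Y)=(X,\,Y/u(X))$ is a smooth diffeomorphism germ at the origin (its Jacobian there is $\operatorname{diag}(1,1/u(0))$), and $\Psi(s^2,s^nu(s^2))=(s^2,s^n)$. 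That finishes the proof with no further source change and no appeal to determinacy theory. Note also that this elementary argument works for every odd $n\geq 3$; the restriction $n\geq 9$ in the statement is only because the smaller odd cases are covered elsewhere by genuine if-and-only-if criteria, whereas the hypotheses here (all intermediate derivatives vanishing as vectors) are merely sufficient --- e.g.\ $(t^2+t^3,t^9)$ is a $(2,9)$-cusp violating them. Your closing remark that the absence of moduli here contrasts with the $(4,5)$ case is apt, but the reason is visible already in the normal form $(s^2,s^nu(s^2))$: the whole tail sits inside a unit multiple of $s^n$, so there is nothing left to obstruct.
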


Fact \ref{fact:(2,3)cusp and (2,5)cusp and (2,7)cusp and (3,4)cusp and (3,5)cusp} and Fact \ref{fact:criterion for (2,n) cusp} relate to the following facts which were proved by Bruce and Gaffney (\cite[Theorem 3.8]{Bruce and Gaffney}).

\begin{fact}\label{fact:simple germs}
The following are representatives of the $C^{\omega}$-$equivalent$ simple germs $(\mathbb{C}, 0) \rightarrow (\mathbb{C}^2, \bold{0}) :$
\begin{eqnarray}
(1) &A_{2k}& : t \rightarrow (t^2, t^{2k+1}), \nonumber \\
(2) &E_{6k}& : t \rightarrow (t^3, t^{3k+1} + t^{3k+p+2}); 0 \leq p \leq k-2, \quad t \rightarrow (t^3, t^{3k+1}), \nonumber \\
(3) &E_{6k+2}& : t \rightarrow (t^3, t^{3k+2} + t^{3k+p+4}); 0 \leq p \leq k-2, \quad t \rightarrow (t^3, t^{3k+2}), \nonumber \\
(4) &W_{12}& : t \rightarrow (t^4, t^5 + t^7), \quad t \rightarrow (t^4, t^5), \nonumber \\
&W_{18}& : t \rightarrow (t^4, t^7 + t^9), \quad t \rightarrow (t^4, t^7 + t^{13}), \quad t \rightarrow (t^4, t^7), \nonumber \\
(5) &W_{1, 2q-1} ^{\#}& : t \rightarrow (t^4, t^6 + t^{2q + 5}); q \geq 1. \nonumber
\end{eqnarray}
\end{fact}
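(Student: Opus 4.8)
The plan is to follow the Mather-style classification method of Bruce and Gaffney: reduce to a manageable prenormal form, strip off higher-order terms with tangent-space computations, and separate the simple strata from the first appearance of moduli. Since an $\mathcal{A}$-simple germ has finite $\mathcal{A}_{e}$-codimension it is finitely $\mathcal{A}$-determined, so throughout it suffices to classify jets of a bounded order. First I would put an arbitrary $\mathcal{A}$-finite germ $f\colon(\mathbb{C},0)\to(\mathbb{C}^{2},\mathbf{0})$ into \emph{prenormal form}: after a linear change of target coordinates one component has order $n:=\operatorname{mult}(f)$; a substitution in the source turns it into $t^{n}$; and the target shear $(x,y)\mapsto(x,\,y-P(x))$ deletes every monomial of the other component whose exponent is divisible by $n$. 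Rescaling then gives $f\sim_{\mathcal{A}}(t^{n},\phi)$ with $\phi=t^{m}+(\text{higher terms, none with exponent divisible by }n)$ and $n\nmid m$; the integers $n$ and $m$ are $\mathcal{A}$-invariants, read off the multiplicity and the value semigroup of the curve ring $\mathbb{C}\{t^{n},\phi\}$.

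The engine is the tangent-space / complete-transversal technique. For $g=(t^{n},\psi)$ one computes the extended tangent space $T\mathcal{A}_{e}g=\mathcal{O}_{t}\cdot g'+g^{*}\mathcal{O}_{2}\cdot(1,0)+g^{*}\mathcal{O}_{2}\cdot(0,1)$ inside $\theta(g)=\mathcal{O}_{t}^{2}$, where $g^{*}\mathcal{O}_{2}$ is the image of the target ring $\mathbb{C}\{x,y\}$ under $g^{*}$, i.e.\ the curve ring $\mathbb{C}\{t^{n},\psi\}$. Any tail monomial of $\psi$ lying in $T\mathcal{A}_{e}g$ can be deleted, and a complement to the tangent space inside the degree-$(k{+}1)$ part parametrises the distinct $(k{+}1)$-jets extending a given $k$-jet; reading these complements off term by term produces the normal-form list, and a branch on which an unremovable one-parameter family survives (one not reducible to a constant by the residual scalings) is non-simple. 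With this in hand the multiplicity is bounded: for $n\ge5$ a continuous modulus already appears for $(t^{5},t^{6}+\cdots)$, so no such germ is simple, and it remains to treat $n=2,3,4$.

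For $n=2$ the tail of $\phi$ has only odd exponents, so $\phi=t^{2k+1}w(t^{2})$ with $w(0)=1$; reparametrising the source by $s=t\,u(t^{2})$ with $u^{2k+1}=w$ turns the second component into $s^{2k+1}$, and a change of the $x$-coordinate of the target alone restores the first component to $s^{2}$, so every such germ is $A_{2k}=(t^{2},t^{2k+1})$, and all of these are simple. For $n=3$ the tail exponents are $\equiv1$ or $\equiv2\pmod3$; the tangent-space computation shows all but a bounded number of them can be normalised, yielding exactly the polynomial normal forms on the $E_{6k}$ and $E_{6k+2}$ lists, all simple, the first modulus appearing just past these families. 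For $n=4$ one finds $m\in\{5,6,7\}$ (for $m\ge9$ a modulus appears again): $m=5$ gives $W_{12}$, $m=7$ gives $W_{18}$, and the case $m=6$, where $\gcd(4,6)=2$, gives the infinite series $W^{\#}_{1,2q-1}=(t^{4},t^{6}+t^{2q+5})$, whose members have to be shown pairwise $\mathcal{A}$-inequivalent (distinguished by the exponent $2q+5$, an invariant of the semigroup together with the torsion at the conductor) and each individually simple. One then closes the argument by checking that the listed germs really are simple — the explicit tangent-space data show the orbits adjacent to any given normal form are precisely the earlier ones on the list — and that everything just off the list is non-simple, exhibiting in each case an explicit one-parameter family of pairwise $\mathcal{A}$-inequivalent germs (at multiplicity $5$, at multiplicity $4$ with a second free tail exponent, and at multiplicity $3$ beyond the $E$-range).

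The step I expect to be the main obstacle is the multiplicity-$4$ analysis. There the curve ring $\mathbb{C}\{t^{4},\phi\}$ and the quotient $\theta(g)/T\mathcal{A}_{e}g$ are genuinely intricate: the even-exponent tail terms (starting with $t^{6}$) interact with the odd ones in a way that makes the reduction to the $W$-normal forms delicate, and pinning down the exact boundary between the infinite simple series $W^{\#}_{1,2q-1}$ and the first non-simple germs of multiplicity $4$ requires the most careful bookkeeping in the whole classification.
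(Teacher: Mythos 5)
First, note that the paper offers no proof of this statement: it is quoted verbatim from \cite[Theorem 3.8]{Bruce and Gaffney}, so there is no argument of the author's to compare yours against. Your outline does follow the strategy that Bruce and Gaffney actually use --- reduction to the prenormal form $(t^{n},\phi)$ with no exponent of $\phi$ divisible by $n$, the complete-transversal/tangent-space calculus in $\theta(g)/T\mathcal{A}_{e}g$, exclusion of multiplicity $n\geq 5$ by exhibiting moduli, and the case division $n=2,3,4$ --- and the individual claims along the way (the $n=2$ reparametrisation, the role of the value semigroup, the separation of the $W^{\#}_{1,2q-1}$ series by the odd generator $2q+5$) are correct.

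That said, what you have written is a plan rather than a proof: every substantive assertion is deferred. That a modulus appears for $(t^{5},t^{6}+\cdots)$ and hence that no germ of multiplicity at least $5$ is simple; that for $n=3$ ``all but a bounded number'' of tail terms can be normalised and that the survivors are exactly the $E_{6k}$ and $E_{6k+2}$ forms; that for $n=4$ the admissible $m$ are $5,6,7$ and that the jet-by-jet reduction lands on precisely $W_{12}$, $W_{18}$ and $W^{\#}_{1,2q-1}$ --- including the two inequivalent normal forms inside $W_{12}$ and the three inside $W_{18}$, which is the very subtlety the present paper is built around; that each listed germ is finitely determined to the order needed; and that each is simple, i.e.\ that every orbit adjacent to it is again on the list: all of these are asserted, not established. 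These tangent-space computations are not routine bookkeeping to be waved at --- they are the entire content of the theorem, and the $n=4$ case (which you yourself flag as the main obstacle) is exactly where the distinction between $(t^{4},t^{5})$ and $(t^{4},t^{5}\pm t^{7})$ is decided. One smaller gap: you assert that an $\mathcal{A}$-simple germ has finite $\mathcal{A}_{e}$-codimension; for curve germs this needs an argument (for instance via the Mather--Gaffney criterion together with the observation that non-finite germs are adjacent to moduli), since simplicity is defined by counting nearby orbits in jet space, not by a codimension bound. As it stands the proposal identifies the correct road map but proves none of the milestones on it.
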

For the definition of \textit{simple}, see \cite[Definition 2.6]{Bruce and Gaffney}.  As mentioned in Section \ref{sec:Introduction}, we discuss Fact \ref{fact:simple germs} $(4)$ in Section \ref{sec:criterion for (4, 5)cusp}.


\subsection{Evolutes of fronts}
Let $\gamma : I \rightarrow \mathbb{R}^2$ be a regular curve. We set $\bold{e}(t) = \dot{\gamma}(t)/ \| \dot{\gamma}(t) \|$ and $\bold{n}(t) = M(\bold{e}(t))$, where $\dot{\gamma} (t) = (d \gamma / dt) (t)$, $\| \dot{\gamma}(t) \| = \sqrt{\dot{\gamma}(t) \cdot \dot{\gamma}(t)}$ and $M$ is the anticlockwise rotation by $\pi/2$. Moreover, $\kappa(t) = \det{(\dot{\gamma}(t), \ddot{\gamma}(t))}/\| \dot{\gamma}(t) \|^3 = \dot{\bold{e}}(t) \cdot \bold{n}(t)/\| \dot{\gamma}(t) \|$ is called the \textit{curvature} of $\gamma$. An \textit{evolute} $Ev(\gamma) : I \rightarrow \mathbb{R}^2$ of $\gamma$ is given by 

\begin{align}
Ev(\gamma)(t) = \gamma(t) + \dfrac{1}{\kappa(t)} \bold{n}(t) 
\end{align}
except for the points where $\kappa(t) = 0$ (cf. \cite{Fukunaga and Takahashi, UY}). A point $t$ is called a \textit{inflection point} if $\kappa(t) = 0$. In general, if $\gamma$ has a singular point, then we cannot define the evolute as above, because $\kappa(t)$ may diverge at a singular point. However, Fukunaga and Takahashi defined an evolute of a front and gave the representation formula of it (\cite[Definition 2.10, Theorem  3.3]{Fukunaga and Takahashi}). Let us see this in the following. Let $(\gamma, \nu) : I \rightarrow \mathbb{R}^2 \times \it{S}^1$ be a Legendre immersion. We call the curve defined by $\gamma_{\lambda}(t) = \gamma(t) + \lambda \nu(t)$ the \textit{parallel curve} of $\gamma$, where $\lambda$ is a real number. Moreover, we denote the curvature of the parallel curve $\gamma_{\lambda}(t)$ by $\kappa_{\lambda}(t)$ when $\gamma_{\lambda}$ is a regular curve. 




\begin{definition}
Let $\gamma : I \rightarrow \mathbb{R}^2$ be a front without inflection points, that is, $\kappa(t) \neq 0$. Define an evolute $Ev(\gamma) : I \rightarrow \mathbb{R}^2$ of the front $\gamma$ by

\begin{subnumcases}{Ev(\gamma)(t) = }
\gamma(t) + \dfrac{1}{\kappa(t)} \bold{n}(t) \quad \text{if $t$ is a regular point of $\gamma$,} \nonumber \\
\gamma_{\lambda}(t) + \dfrac{1}{\kappa_{\lambda}(t)} \bold{n}_{\lambda}(t) \quad \text{if $t = t_{0}$ is a singular point of $\gamma$ }, \nonumber
\end{subnumcases}
where $\bold{n}_{\lambda}(t) = \dfrac{1 - \lambda \kappa(t)}{\vert 1 - \lambda \kappa(t) \vert} \bold{n}(t)$ and $\lambda \in \mathbb{R}$ is satisfied the condition $\lambda \neq 1/\kappa(t)$.

\end{definition}

Moreover, they proved that the following fact (\cite[Proposition 5.2 $(1)$ and $(3)$]{Fukunaga and Takahashi}).

\begin{fact}\label{fact:evolute of one singlarity down}
Let $\gamma : I \rightarrow \mathbb{R}^2$ be a front without inflection points. Assume that $t_{0}$ is a singular point of $\gamma$. Then $(d^i \gamma/dt^i )(t_{0}) = \bold{0}$ for $i = 2, \cdots, n+1$ if and only if $t_{0}$ is a singular point of $Ev^i(\gamma)$ for $i = 1, \cdots, n$, where $Ev^i(\gamma)$ is the $i$-th evolute of the front, that is, $Ev^i(\gamma)(t) = Ev(Ev^{i-1}(\gamma))(t)$. 
\end{fact}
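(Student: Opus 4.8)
The plan is to recast both conditions in terms of the frame of the Legendre lift $(\gamma,\nu)$ of the front, following Fukunaga--Takahashi, and to reduce everything to the order of vanishing at $t_{0}$ of the ``$\beta$-component'' of the curvature of the Legendre curve. Put $\mu(t)=M(\nu(t))$, so $\{\nu(t),\mu(t)\}$ is a positively oriented orthonormal frame along $\gamma$; since $(\gamma,\nu)$ is a Legendre immersion there are $C^{\infty}$ functions $\ell,\beta$ with $\gamma'=\beta\mu$, $\nu'=\ell\mu$, hence $\mu'=-\ell\nu$. Thus $t$ is a singular point of $\gamma$ exactly when $\beta(t)=0$, and $\gamma$ has no inflection points means $\ell$ never vanishes. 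By the representation formula \cite[Theorem 3.3]{Fukunaga and Takahashi}, $Ev(\gamma)=\gamma-(\beta/\ell)\,\nu$, and $Ev(\gamma)$ is again a front without inflection points whose curvature of the Legendre curve is $\bigl(\ell,\,\pm(\beta/\ell)'\bigr)$ (the sign being irrelevant below, as is the exact normalisation of the first entry, of which one only needs that it is nonvanishing). Iterating, $Ev^{i}(\gamma)$ carries a curvature $(\ell,\beta_{i})$ with $\beta_{0}=\beta$ and $\beta_{i}=(\beta_{i-1}/\ell)'$, so $t_{0}$ is a singular point of $Ev^{i}(\gamma)$ if and only if $\beta_{i}(t_{0})=0$; moreover $\beta_{0}(t_{0})=\beta(t_{0})=0$ since $t_{0}$ is a singular point of $\gamma$.

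First I would reduce the condition on the derivatives of $\gamma$ to one on $\beta$. Differentiating $\gamma'=\beta\mu$ and applying Leibniz gives $\gamma^{(i)}=\sum_{j=0}^{i-1}\binom{i-1}{j}\beta^{(j)}\mu^{(i-1-j)}$; evaluating at $t_{0}$ and inducting on $j$ (using $\beta(t_{0})=0$), one checks that $\gamma^{(i)}(t_{0})=\bm{0}$ for $i=2,\dots,n+1$ if and only if $\beta^{(j)}(t_{0})=0$ for $j=0,\dots,n$. Indeed, once $\beta(t_{0})=\dots=\beta^{(j-1)}(t_{0})=0$ the Leibniz sum collapses to $\gamma^{(j+1)}(t_{0})=\beta^{(j)}(t_{0})\,\mu(t_{0})$, and $\mu(t_{0})$ is a unit vector, so $\gamma^{(j+1)}(t_{0})=\bm{0}$ forces $\beta^{(j)}(t_{0})=0$; the converse implication is immediate from the same formula. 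In short, the left-hand condition of the statement is exactly ``$\beta$ vanishes to order $\geq n+1$ at $t_{0}$''.

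It then remains to prove that $\beta_{i}(t_{0})=0$ for $i=1,\dots,n$ (equivalently, for $i=0,\dots,n$, since $\beta_{0}(t_{0})=0$) is again equivalent to $\beta$ vanishing to order $\geq n+1$ at $t_{0}$; this is the step that uses the no-inflection-point hypothesis. Since $\ell(t_{0})\neq0$, the operator $T\colon f\mapsto(f/\ell)'$ lowers the order of vanishing at $t_{0}$ by exactly one on functions vanishing at $t_{0}$: writing $f=(t-t_{0})^{m}u$ with $u(t_{0})\neq0$ one gets $f/\ell=(t-t_{0})^{m}v$ with $v(t_{0})\neq0$, hence $Tf=(t-t_{0})^{m-1}\bigl(mv+(t-t_{0})v'\bigr)$, whose value at $t_{0}$ is $m\,v(t_{0})\neq0$. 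Since $\beta_{i}=T^{i}\beta$, if $\operatorname{ord}_{t_{0}}\beta\geq n+1$ then $\operatorname{ord}_{t_{0}}\beta_{i}=\operatorname{ord}_{t_{0}}\beta-i\geq1$ for all $i\leq n$, so $\beta_{i}(t_{0})=0$; conversely, if $\operatorname{ord}_{t_{0}}\beta=m\leq n$ then $\beta_{m}(t_{0})\neq0$ while $m\leq n$, so the $\beta_{i}(t_{0})$, $1\le i\le n$, are not all zero. Chaining the three equivalences --- derivatives of $\gamma$ up to order $n+1$ vanish at $t_{0}$ $\iff$ $\operatorname{ord}_{t_{0}}\beta\geq n+1$ $\iff$ $\beta_{i}(t_{0})=0$ for $i=1,\dots,n$ $\iff$ $t_{0}$ is a singular point of $Ev^{i}(\gamma)$ for $i=1,\dots,n$ --- completes the proof.

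The obstacle I anticipate is organisational rather than conceptual: fixing the precise normalisations in the Legendre frame and in Fukunaga--Takahashi's evolute formula so that the recursion $\beta_{i}=(\beta_{i-1}/\ell)'$ is justified (the sign ambiguities in the unit normal of $Ev(\gamma)$ and in $\beta_{i}$ are harmless), verifying that ``no inflection points'' is genuinely inherited by \emph{every} iterated evolute $Ev^{i}(\gamma)$ at $t_{0}$, and keeping the three index ranges --- derivatives of $\gamma$ up to $n+1$, derivatives of $\beta$ up to $n$, and evolutes $Ev^{i}$ for $i$ up to $n$ --- carefully aligned.
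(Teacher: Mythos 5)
Your argument is correct. Note that the paper does not prove this statement at all: it is quoted as a known result of Fukunaga and Takahashi (their Proposition 5.2), and the only machinery the paper recalls is exactly what you use, namely that $\bigl(Ev^{i}(\gamma),M^{i}(\nu)\bigr)$ is a Legendre immersion with curvature $(\ell,\beta_{i})$, $\beta_{i}=(\beta_{i-1}/\ell)'$. Your reduction of both sides to the order of vanishing of $\beta$ at $t_{0}$, using $\gamma'=\beta\mu$ and the fact that $f\mapsto(f/\ell)'$ drops the vanishing order by exactly one when $\ell(t_{0})\neq0$, is essentially the argument of the cited source, so there is nothing to add beyond confirming that it goes through.
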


By Fact \ref{fact:evolute of one singlarity down}, we immediately have the following result.

\begin{corollary}\label{coro:Evolute of regualr points}
Let $\gamma : I \rightarrow \mathbb{R}^2$ be a front without inflection points. Assume that $t_{0}$ is a singular point of $\gamma$. Then $(d^2 \gamma/dt^2) (t_{0}) = \bold{0}$, $\cdots$, $(d^{(n-1)} \gamma / dt^{(n-1)}) (t_{0}) = \bold{0}$ and $(d^n \gamma / dt^n ) (t_{0}) \neq \bold{0}$ if and only if $t_{0}$ is a singular point of $Ev(\gamma), \cdots Ev^{n-2}(\gamma)$ and a regular point of $Ev^{n-1}(\gamma)$.
\end{corollary}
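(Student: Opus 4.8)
The plan is to deduce this as an immediate specialization of Fact \ref{fact:evolute of one singlarity down}. First I would apply that Fact with $n$ replaced by $n-1$: it says that $(d^i\gamma/dt^i)(t_0)=\bold{0}$ for $i=2,\dots,n$ holds if and only if $t_0$ is a singular point of $Ev^i(\gamma)$ for $i=1,\dots,n-1$. This already handles the ``vanishing up to order $n-1$'' half of the hypothesis together with the conclusion that $t_0$ is singular for $Ev(\gamma),\dots,Ev^{n-2}(\gamma)$. So what remains is to translate the two extra clauses: that $(d^n\gamma/dt^n)(t_0)\neq\bold{0}$ corresponds to $t_0$ being a \emph{regular} point of $Ev^{n-1}(\gamma)$.

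For this remaining equivalence, I would again invoke Fact \ref{fact:evolute of one singlarity down}, this time with $n$ replaced by $n$ (the ``next'' case): assuming we already know $(d^i\gamma/dt^i)(t_0)=\bold{0}$ for $i=2,\dots,n-1$, the Fact with the index $n$ says $(d^i\gamma/dt^i)(t_0)=\bold{0}$ for $i=2,\dots,n+1$ iff $t_0$ is singular for $Ev^i(\gamma)$, $i=1,\dots,n$; contrapositively, under the standing vanishing hypotheses through order $n-1$, the statement ``$(d^n\gamma/dt^n)(t_0)\neq\bold{0}$'' is equivalent to ``$t_0$ is not a singular point of $Ev^{n-1}(\gamma)$'', i.e.\ $t_0$ is a regular point of $Ev^{n-1}(\gamma)$. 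Here one must check that $Ev^{n-1}(\gamma)$ is well-defined and is itself a front without inflection points in a neighborhood of $t_0$, so that ``not singular'' genuinely means ``regular''; this follows because each evolute of a front without inflection points is again a front (by the construction recalled before the Corollary, via the parallel curve $\gamma_\lambda$), so one may iterate Fact \ref{fact:evolute of one singlarity down} legitimately.

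The main obstacle, such as it is, is purely bookkeeping: lining up the index shifts between the ``$i=2,\dots,n+1$'' form of the hypothesis in Fact \ref{fact:evolute of one singlarity down} and the ``$i=2,\dots,n-1$, and $\neq\bold{0}$ at order $n$'' form here, and making sure the iterated evolutes remain inflection-point-free near $t_0$ so that each application of the Fact is valid. No genuine new computation is needed; combining the $n-1$ and $n$ instances of Fact \ref{fact:evolute of one singlarity down} and taking a contrapositive yields the statement directly.
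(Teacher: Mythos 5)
Your overall strategy is the right one and matches what the paper intends (the paper offers no written proof, saying only that the corollary follows ``immediately'' from Fact~\ref{fact:evolute of one singlarity down}): take two consecutive instances of that equivalence and pass to a contrapositive at the top order. Your remark that one must know each iterated evolute is again a front without inflection points near $t_0$ (so that ``not singular'' genuinely means ``regular'') is a legitimate point of care, and it is settled by the representation formula for $Ev^n(\gamma)$, which is a Legendre immersion with curvature $(\ell,\beta_n)$ for the same $\ell$.

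However, both instances you invoke are shifted by one, and the second step does not deliver what you claim. Writing the parameter of Fact~\ref{fact:evolute of one singlarity down} as $m$ (derivatives of orders $2,\dots,m+1$ versus singularity of $Ev^1(\gamma),\dots,Ev^m(\gamma)$), the two instances needed are $m=n-2$ and $m=n-1$: the first gives ``$(d^i\gamma/dt^i)(t_0)=\bm{0}$ for $i=2,\dots,n-1$ iff $t_0$ is singular for $Ev^1(\gamma),\dots,Ev^{n-2}(\gamma)$'', and the contrapositive of the second, under those standing hypotheses, gives ``$(d^n\gamma/dt^n)(t_0)\neq\bm{0}$ iff $t_0$ is regular for $Ev^{n-1}(\gamma)$''. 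You instead quote the instances $m=n-1$ and $m=n$. The first of these relates vanishing through order $n$ to singularity through $Ev^{n-1}(\gamma)$, which is not the ``vanishing up to order $n-1$'' statement you say it handles (the conclusion you draw is true, but it comes from the $m=n-2$ instance, not the one you cite). More seriously, the contrapositive of the $m=n$ instance, even granting vanishing through order $n-1$ and singularity of $Ev^1(\gamma),\dots,Ev^{n-2}(\gamma)$, only yields ``the $n$-th or the $(n+1)$-st derivative is nonzero at $t_0$ iff $Ev^{n-1}(\gamma)$ or $Ev^{n}(\gamma)$ is regular at $t_0$'', from which the desired single-order equivalence does not follow. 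Shifting both indices down by one repairs the argument with no other change.
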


As mentioned in Section \ref{sec:Introduction}, we give properties of $(n,n+1)$ cusps by using Fact \ref{fact:evolute of one singlarity down} and Corollary \ref{coro:Evolute of regualr points} in Section \ref{sec:Classification of $(n, n+1)$ cusp in $C^{1}$ class diffeomorphism}.


In order to give another representation of the evolute of a front, we recall the Frenet formula of a front. Let $(\gamma, \nu) : I \rightarrow \mathbb{R}^2 \times \it{S}^1$ be a Legendre immersion. We set $\bold{\mu}(t) = M(\nu(t))$ and  $\ell(t) = \nu'(t) \cdot \bold{\mu}(t)$. The pair $\{ \nu(t), \bold{\mu}(t) \}$ is called a \textit{moving frame along of $\gamma$}. Then the Frenet formula of $\gamma$ is given by

\begin{equation*}
\begin{pmatrix} \nu'(t) \\ \bold{\mu}'(t)  \end{pmatrix} =
\begin{pmatrix}
0 & \ell(t) \\ -\ell(t) & 0
\end{pmatrix}
\begin{pmatrix} \nu(t) \\ \bold{\mu}(t)  \end{pmatrix}.
\end{equation*}
We set $\beta(t) = \gamma'(t) \cdot \bold{\mu}(t)$ and we call the pair $(\ell, \beta)$ \textit{the curvature of the Legendre immersion.}  Note that $t_{0}$ is an inflection point of $\gamma$ if and only if $\ell(t_{0}) = 0$. By using the moving frame of a front, the evolute of a front can be written as follow (\cite[Theorem 3.3, Theorem 4.1]{Fukunaga and Takahashi}).

\begin{fact}
Let $(\gamma, \nu) : I \rightarrow \mathbb{R}^2 \times \it{S}^1$ be a Legendre immersion without inflection points. 
\begin{enumerate}
\item The evolute of a front $Ev(\gamma)$ can be expressed by
\begin{align*}
Ev(\gamma)(t) = \gamma(t) - \frac{\beta(t)}{\ell(t)} \nu(t).
\end{align*}
Moreover, $(Ev(\gamma), \mu)$ is a Legendre immersion with the curvature 
\begin{align*}
\left( \ell(t),  \frac{d}{dt} \left( \frac{\beta(t)}{\ell(t)} \right) \right).
\end{align*}
\item The evolute of an evolute of a front $Ev(Ev(\gamma))$ can be expressed by 
\begin{align*}
Ev(Ev(\gamma))(t) = Ev(\gamma)(t) - \frac{\beta^{\prime}(t) \ell(t) - \beta(t) \ell^{\prime}(t)}{\ell^3(t)} \mu(t).
\end{align*}
Moreover, $(Ev(Ev(\gamma)), -\nu)$ is a Legendre immersion with the curvature 
\begin{align*}
\left( \ell(t),  \frac{d}{dt} \left( \frac{d}{dt} \left( \frac{\beta(t)}{\ell(t)} \right) \right) \right).
\end{align*}
\end{enumerate}
\end{fact}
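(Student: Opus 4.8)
The plan is to reduce everything to the orthonormal moving frame $\{\nu,\mu\}$ and the Frenet formula $\nu'=\ell\mu$, $\mu'=-\ell\nu$. Since $(\gamma,\nu)$ is a Legendre immersion, $\gamma'(t)\cdot\nu(t)=0$ forces $\gamma'(t)=\beta(t)\mu(t)$, and the no-inflection hypothesis means $\ell(t)\neq 0$ everywhere, so every division by $\ell$ below is legitimate. I would first prove the formula in $(1)$ at the regular points of $\gamma$ by a direct computation, then extend it to singular points via parallel curves; the remaining claims of $(1)$ then come out of differentiating the formula, and $(2)$ is obtained by applying $(1)$ to the Legendre immersion $(Ev(\gamma),\mu)$.

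At a regular point $t$ of $\gamma$ one has $\beta(t)\neq 0$, so $\be=\sgn(\beta)\,\mu$ and $\n=M(\be)=-\sgn(\beta)\,\nu$, using $M(\mu)=M^2(\nu)=-\nu$. Differentiating $\gamma'=\beta\mu$ gives $\gamma''=\beta'\mu-\beta\ell\nu$, whence $\det(\gamma',\gamma'')=-\beta^2\ell\,\det(\mu,\nu)=\beta^2\ell$ and $\kappa=\ell/|\beta|$; substituting into $\gamma+\kappa^{-1}\n$ gives exactly $\gamma-(\beta/\ell)\nu$. For a singular point $t_0$ (so $\beta(t_0)=0$), pick $\lambda\in\mathbb{R}\setminus\{0\}$; then $\gamma_\lambda'=\gamma'+\lambda\nu'=(\beta+\lambda\ell)\mu$ is nonzero near $t_0$, so $(\gamma_\lambda,\nu)$ is a Legendre immersion which is regular near $t_0$ and has curvature $(\ell,\beta+\lambda\ell)$. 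Applying the regular-point formula just proved to $\gamma_\lambda$ yields $Ev(\gamma_\lambda)=\gamma_\lambda-\frac{\beta+\lambda\ell}{\ell}\nu=\gamma-\frac{\beta}{\ell}\nu$, which is independent of $\lambda$; by the definition of $Ev(\gamma)$ at a singular point, this common value is $Ev(\gamma)(t_0)$, so the formula holds on all of $I$.

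For the second half of $(1)$, set $\wtil\gamma:=Ev(\gamma)=\gamma-(\beta/\ell)\nu$ and differentiate: using $\gamma'=\beta\mu$ and $\nu'=\ell\mu$ the $\mu$-terms cancel and $\wtil\gamma'=-(\beta/\ell)'\,\nu$. Since $\nu\cdot\mu=0$, this says $\mu$ is a unit normal field along $\wtil\gamma$, and since $\mu'=-\ell\nu$ with $\ell\neq 0$ the pair $(\wtil\gamma,\mu)$ is an immersion, hence a Legendre immersion. Taking $\{\mu,M(\mu)\}=\{\mu,-\nu\}$ as moving frame, the curvature pair is read off from $\mu'=-\ell\nu=\ell\cdot M(\mu)$ and $\wtil\gamma'\cdot M(\mu)=-(\beta/\ell)'\nu\cdot(-\nu)=(\beta/\ell)'$, giving $\bigl(\ell,\frac{d}{dt}(\beta/\ell)\bigr)$. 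Part $(2)$ is then immediate: apply $(1)$ verbatim to $(Ev(\gamma),\mu)$, whose ``$\beta$'' is $(\beta/\ell)'=\frac{\beta'\ell-\beta\ell'}{\ell^2}$ and whose ``$\nu$'' is $\mu$; the evolute formula of $(1)$ gives $Ev(Ev(\gamma))=Ev(\gamma)-\frac{(\beta/\ell)'}{\ell}\mu=Ev(\gamma)-\frac{\beta'\ell-\beta\ell'}{\ell^3}\mu$, and its curvature statement gives that $(Ev(Ev(\gamma)),M(\mu))=(Ev(Ev(\gamma)),-\nu)$ is a Legendre immersion with curvature $\bigl(\ell,\frac{d}{dt}\bigl(\frac{d}{dt}(\beta/\ell)\bigr)\bigr)$.

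Everything but one step is bilinear bookkeeping with $\{\nu,\mu\}$ together with the Frenet relations. The one point I expect to be genuinely delicate is the singular-point case of $(1)$: one must check that the parallel-curve prescription for $Ev(\gamma)(t_0)$ in the Definition really is independent of the admissible $\lambda$ and reconciles with the regular-point formula, which is where the sign factor $\n_\lambda=\frac{1-\lambda\kappa}{|1-\lambda\kappa|}\n$ in the Definition must be handled carefully (note that $\kappa$ blows up and $\n$ is discontinuous at $t_0$). That sign analysis is the main obstacle; the rest is routine.
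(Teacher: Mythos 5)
The paper itself offers no proof of this Fact: it is quoted verbatim from Fukunaga--Takahashi (Theorems 3.3 and 4.1 of their paper), so your argument is being measured against the source rather than against anything in this article. Your route --- reduce everything to the frame $\{\nu,\mu\}$, use $\gamma'=\beta\mu$, $\nu'=\ell\mu$, $\mu'=-\ell\nu$, verify the evolute formula at regular points via $\kappa=\ell/|\beta|$ and $\bm{n}=-\sgn(\beta)\,\nu$, and pass to singular points through the parallel curves --- is the standard one, and your computations of $Ev(\gamma)=\gamma-(\beta/\ell)\nu$, of $Ev(\gamma)'=-(\beta/\ell)'\nu$, of the curvature $(\ell,(\beta/\ell)')$ of $(Ev(\gamma),\mu)$, and of the displayed formula for $Ev(Ev(\gamma))$ are all correct. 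The point you flag as delicate (reconciling the $\lambda$-independent value $\gamma-(\beta/\ell)\nu$ with the sign factor $\frac{1-\lambda\kappa}{|1-\lambda\kappa|}$ in the Definition) is a real but minor bookkeeping issue of the quoted Definition; your $\lambda$-independence computation is the substance of the well-definedness argument.

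There is, however, one concrete inconsistency in your part (2). Applying part (1) verbatim to the Legendre immersion $(Ev(\gamma),\mu)$, whose curvature is $(\ell,\tilde\beta)$ with $\tilde\beta=(\beta/\ell)'$, produces the second-evolute curvature $\bigl(\ell,\frac{d}{dt}(\tilde\beta/\ell)\bigr)=\bigl(\ell,\frac{d}{dt}\bigl(\frac{1}{\ell}\frac{d}{dt}\bigl(\frac{\beta}{\ell}\bigr)\bigr)\bigr)$, not $\bigl(\ell,\frac{d}{dt}\bigl(\frac{d}{dt}\bigl(\frac{\beta}{\ell}\bigr)\bigr)\bigr)$ as you wrote: the inner division by $\ell$ cannot be dropped unless $\ell$ is constant, so your own method does not deliver the formula you state. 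In fact the version with the $1/\ell$ is the correct one --- it is exactly what the paper's later quotation of Theorem 5.1 gives, namely $\beta_2=\frac{d}{dt}(\beta_1/\ell)$ with $\beta_1=\frac{d}{dt}(\beta/\ell)$ --- so the curvature printed in the Fact as transcribed here is a misprint. You should state and prove the corrected curvature $\bigl(\ell,\frac{d}{dt}\bigl(\frac{1}{\ell}\frac{d}{dt}\bigl(\frac{\beta}{\ell}\bigr)\bigr)\bigr)$ rather than present the misprinted expression as the conclusion of an argument that actually proves something else.
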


Finally, we recall representations of the $n$-th evolutes of fronts. Let $(\gamma, \nu) : I \rightarrow \mathbb{R}^2 \times \it{S}^1$ be a Legendre immersion without inflection points. We denote $Ev^0(t) = \gamma(t)$ and $Ev^1(t) = Ev(\gamma)(t)$. We define the followings inductively.

\begin{align*}
Ev^n(t) = Ev(Ev^{n-1} (\gamma))(t), \quad \beta_{0}(t) = \beta(t), \quad and \quad \beta_{n}(t) = \dfrac{d}{dt} \Bigl( \dfrac{\beta_{n-1}(t)}{\ell(t)} \Bigl).
\end{align*}
Fukunaga and Takahashi gave the representations of the $n$-th evolutes of fronts.

\begin{fact}(\cite[Theorem 5.1]{Fukunaga and Takahashi})
Let $\gamma : I \rightarrow \mathbb{R}$ be a front. Then the $n$-th evolute of $\gamma$ is given by 

\begin{align}
Ev^n(\gamma)(t) = Ev^{n-1}(\gamma)(t) - \dfrac{\beta_{n-1}(t)}{\ell(t)} M^{n-1}(\nu(t)), \label{align:the n-th evolutes of fronts}
\end{align}
where $M^{n}$ is $n$-times operations of $M$. Moreover, $\left( Ev^{n}(\gamma), M^{n}(\nu) \right)$ is a Legendre immersion with the curvature $\left( \ell(t), \beta_n(t) \right)$.

\end{fact}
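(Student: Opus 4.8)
The plan is to argue by induction on $n \ge 0$, using as the single nontrivial input the one-step evolute formula, i.e. part $(1)$ of the preceding Fact: if $(\alpha, \eta)$ is a Legendre immersion without inflection points with curvature $(\ell, \beta)$, then $Ev(\alpha) = \alpha - (\beta/\ell)\eta$ and $(Ev(\alpha), M(\eta))$ is again a Legendre immersion, with curvature $\bigl(\ell, \tfrac{d}{dt}(\beta/\ell)\bigr)$. Throughout one keeps the standing assumption, implicit in all the facts above, that $\gamma$ has no inflection points, i.e. $\ell(t) \ne 0$ on $I$, so that every $\beta_k/\ell$ and every evolute appearing in the tower is defined.

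For the base case $n = 0$ there is nothing to do: $Ev^0(\gamma) = \gamma$, $M^0 = \mathrm{id}$, $\beta_0 = \beta$, so $\bigl(Ev^0(\gamma), M^0(\nu)\bigr) = (\gamma, \nu)$ is by hypothesis a Legendre immersion with curvature $(\ell, \beta_0)$, and \eqref{align:the n-th evolutes of fronts} is vacuous. For the inductive step I would assume the assertion for $n-1 \ge 0$, so that $\bigl(Ev^{n-1}(\gamma), M^{n-1}(\nu)\bigr)$ is a Legendre immersion with curvature $(\ell, \beta_{n-1})$ and, since the first curvature component is still $\ell \ne 0$, has no inflection points. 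Then I would apply the one-step formula above to this Legendre immersion, reading its data through $\alpha \mapsto Ev^{n-1}(\gamma)$, $\eta \mapsto M^{n-1}(\nu)$, $\beta \mapsto \beta_{n-1}$, and using $M(M^{n-1}(\nu)) = M^n(\nu)$. This produces
\[
Ev^n(\gamma)(t) = Ev\bigl(Ev^{n-1}(\gamma)\bigr)(t) = Ev^{n-1}(\gamma)(t) - \frac{\beta_{n-1}(t)}{\ell(t)}\, M^{n-1}(\nu(t)),
\]
which is exactly \eqref{align:the n-th evolutes of fronts}, and it also yields that $\bigl(Ev^n(\gamma), M^n(\nu)\bigr)$ is a Legendre immersion with curvature $\bigl(\ell(t), \tfrac{d}{dt}(\beta_{n-1}(t)/\ell(t))\bigr) = (\ell(t), \beta_n(t))$ by the recursive definition $\beta_n = \tfrac{d}{dt}(\beta_{n-1}/\ell)$. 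That closes the induction.

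I do not expect any real obstacle here: the statement carries no analytic content and the work is entirely bookkeeping about the moving frame. The one thing to watch is the consistency of the frame conventions under iteration — that a single $Ev$ advances the unit normal by one rotation $M$ and sends the curvature pair $(\ell, \beta_k)$ to $\bigl(\ell, \tfrac{d}{dt}(\beta_k/\ell)\bigr) = (\ell, \beta_{k+1})$ with no extra terms (because the fixed isometry $M$ commutes with $d/dt$), and that the absence of inflection points propagates up the whole tower precisely because the first curvature component $\ell$ is never altered. Once these conventions are fixed the induction runs uniformly, and the only real task — tracking which power $M^j$ and which $\beta_j$ show up at each level — is taken care of automatically.
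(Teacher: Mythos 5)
Your induction is correct and is exactly the expected argument: the base case is the definition of the Legendre immersion $(\gamma,\nu)$ with curvature $(\ell,\beta_0)$, and the inductive step is a direct application of the one-step evolute formula (part (1) of the preceding Fact) to $\bigl(Ev^{n-1}(\gamma), M^{n-1}(\nu)\bigr)$, with the curvature update $\beta_{n-1} \mapsto \tfrac{d}{dt}(\beta_{n-1}/\ell) = \beta_n$ and the observation that $\ell$ is unchanged, so the no-inflection-point hypothesis propagates. The paper itself gives no proof here (it cites Theorem 5.1 of Fukunaga--Takahashi), but your argument is the standard one and I see no gap.
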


\section{\bf Classifications of $(n, n+1)$ cusps in the sense of $C^{1}$-equivalence}
\label{sec:Classification of $(n, n+1)$ cusp in $C^{1}$ class diffeomorphism}

In this section, we prove the following.

\begin{theorem}\label{thm:C1-diffeo-classification}
Let $\gamma : I \rightarrow \mathbb{R}^2$ be a $C^{\infty}$ curve. Then $\gamma$ is $C^{1}$-$equivalent$ to $Cusp_{(n, n+1)}(t) = (t^{n}, t^{n+1})$ at $t = 0$ if and only if
\begin{align}
\gamma'(0) = \gamma''(0) = \cdots = \gamma^{(n-1)}(0) = 0, \label{align:C1-diffeo-classification01}  \\
\det{(\gamma^{(n)}(0), \gamma^{(n+1)}(0))} \neq 0. \label{align:C1-diffeo-classification02}
\end{align}
\end{theorem}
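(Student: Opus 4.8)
The plan is to prove both directions by reducing to a normal form via explicit coordinate changes. For the ``only if'' direction, suppose $\gamma$ is $C^1$-equivalent to $Cusp_{(n,n+1)}$. Since a $C^1$ diffeomorphism $\psi$ of $(\mathbb{R},0)$ satisfies $\psi'(0)\neq 0$ and $\Psi$ is a $C^1$ diffeomorphism of $(\mathbb{R}^2,\bm{0})$ with invertible linear part $d\Psi_{\bm{0}}$, the conditions \eqref{align:C1-diffeo-classification01} and \eqref{align:C1-diffeo-classification02} must themselves be $C^1$-invariant. Concretely, $\Psi\circ\gamma_1\circ\psi^{-1}=\gamma_2$ gives, by the chain rule applied $k$ times, that $\gamma_2^{(k)}(0)$ is a linear combination of $d\Psi_{\bm{0}}\bigl(\gamma_1^{(j)}(0)\bigr)$ for $j\le k$ with the leading coefficient $(\psi^{-1})'(0)^k\neq 0$; hence $\gamma_1^{(j)}(0)=\bm 0$ for $j<n$ forces $\gamma_2^{(j)}(0)=\bm 0$ for $j<n$, and then $\gamma_2^{(n)}(0)=(\psi^{-1})'(0)^n\,d\Psi_{\bm{0}}\gamma_1^{(n)}(0)$, $\gamma_2^{(n+1)}(0)=(\psi^{-1})'(0)^{n+1}\,d\Psi_{\bm{0}}\gamma_1^{(n+1)}(0)+c\,d\Psi_{\bm{0}}\gamma_1^{(n)}(0)$ for some constant $c$, so $\det\bigl(\gamma_2^{(n)}(0),\gamma_2^{(n+1)}(0)\bigr)=(\psi^{-1})'(0)^{2n+1}\det(d\Psi_{\bm 0})\,\det\bigl(\gamma_1^{(n)}(0),\gamma_1^{(n+1)}(0)\bigr)$, which is nonzero iff the original one is. Applying this with $\gamma_2=Cusp_{(n,n+1)}$, for which $\gamma_2^{(j)}(0)=\bm 0$ for $j<n$ and $\det\bigl(\gamma_2^{(n)}(0),\gamma_2^{(n+1)}(0)\bigr)=n!\,(n+1)!\neq 0$, yields \eqref{align:C1-diffeo-classification01}--\eqref{align:C1-diffeo-classification02} for $\gamma=\gamma_1$.

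For the ``if'' direction, assume \eqref{align:C1-diffeo-classification01}--\eqref{align:C1-diffeo-classification02}. By Taylor's theorem with integral remainder, write $\gamma(t)=t^n\,\bm a(t)$ where $\bm a(t)=\frac{1}{(n-1)!}\int_0^1(1-s)^{n-1}\gamma^{(n)}(st)\,ds$ is $C^\infty$ with $\bm a(0)=\gamma^{(n)}(0)/n!$ and $\bm a'(0)=\gamma^{(n+1)}(0)/(n+1)!$; condition \eqref{align:C1-diffeo-classification02} says exactly that $\{\bm a(0),\bm a'(0)\}$ is a basis of $\mathbb{R}^2$. Composing with the linear isomorphism sending $\bm a(0)\mapsto(1,0)$ and $\bm a'(0)\mapsto(0,1)$ (and rescaling), we may assume $\gamma(t)=\bigl(t^n(1+t\,p(t)),\ t^{n+1}(1+t\,q(t))\bigr)$ for smooth $p,q$. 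The first coordinate $x(t)=t^n(1+tp(t))$ has $x(t)=t^n u(t)^n$ near $0$ where $u(t)=(1+tp(t))^{1/n}$ is smooth with $u(0)=1$, so setting $\tilde t=tu(t)$ gives a $C^\infty$ diffeomorphism of $(\mathbb{R},0)$ (its derivative at $0$ is $1$) with $x=\tilde t^{\,n}$; in the new parameter the curve is $(\tilde t^{\,n},\ \tilde t^{\,n+1}(1+\tilde t\,\tilde q(\tilde t)))$ for some smooth $\tilde q$. Finally, the target diffeomorphism $(X,Y)\mapsto\bigl(X,\ Y\,(1+X^{1/n}\,\tilde q(X^{1/n}))^{-1}\bigr)$ is a problem, since $X^{1/n}$ is not $C^\infty$ in $X$; this is precisely where only $C^1$-regularity is available. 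Instead I would kill the $Y$-correction by a $C^1$ change: the map $\Phi(X,Y)=\bigl(X,\ Y-X^{(n+1)/n}\tilde q(X^{1/n})\bigr)$, written as $\Phi(\tilde t^{\,n},Y)=(\tilde t^{\,n},\,Y-\tilde t^{\,n+1}\tilde q(\tilde t))$, is $C^1$ near $\bm 0$ because $X^{(n+1)/n}=|X|^{(n+1)/n}\,\mathrm{sgn}(X)^{n+1}$ has a continuous first derivative (its gradient vanishes at the origin since $(n+1)/n>1$), and likewise the composite $X^{(n+1)/n}\tilde q(X^{1/n})$ is $C^1$; it is a local $C^1$ diffeomorphism because $d\Phi_{\bm 0}=\mathrm{id}$. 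Applying $\Phi$ sends the curve to $(\tilde t^{\,n},\tilde t^{\,n+1})=Cusp_{(n,n+1)}(\tilde t)$, completing the reduction.

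The main obstacle, and the reason the theorem gives only $C^1$-equivalence rather than $\mathcal A$-equivalence, is exactly the last step: the normalizing change of coordinates in the target must absorb a term of the form $\tilde t^{\,n+1}\tilde q(\tilde t)$ expressed as a function of $X=\tilde t^{\,n}$, and such functions involve $X^{(n+1)/n}$, which is $C^1$ but not $C^2$ at $X=0$ when $n\ge 2$. For $n=2$ one gets $X^{3/2}$, for $n=3$ one gets $X^{4/3}$, etc.; in all these cases the construction is genuinely only $C^1$, consistent with the remark in the introduction that $(t^4,t^5+t^7)$, $(t^4,t^5-t^7)$ and $(t^4,t^5)$ are pairwise $\mathcal A$-inequivalent although they all satisfy \eqref{align:C1-diffeo-classification01}--\eqref{align:C1-diffeo-classification02}. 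I would therefore be careful to verify the $C^1$-regularity claim for $\Phi$ rigorously — checking that the partial derivatives of $X^{(n+1)/n}\tilde q(X^{1/n})$ extend continuously to $X=0$ with value $0$ — since that estimate is the crux of the argument; everything else is bookkeeping with the chain rule and the smoothness of the reparametrization $\tilde t=tu(t)$.
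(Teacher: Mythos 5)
Your overall strategy coincides with the paper's: establish that \eqref{align:C1-diffeo-classification01} and \eqref{align:C1-diffeo-classification02} are invariants, reduce by \emph{smooth} changes of coordinates to a normal form $(\tilde{t}^{\,n},\ \tilde{t}^{\,n+1}(1+\tilde{t}\,\tilde{q}(\tilde{t})))$, and then remove the residual factor by a target change of coordinates that is only $C^1$. The difficulty is concentrated in that last step, and this is where your construction has a genuine gap: you build $\Phi(X,Y)=(X,\,Y-X^{(n+1)/n}\tilde{q}(X^{1/n}))$ out of the \emph{first} coordinate. When $n$ is even, $X=\tilde{t}^{\,n}$ does not determine $\tilde{t}$ --- the branches $\tilde{t}>0$ and $\tilde{t}<0$ lie over the same $X$ --- so no function of $X$ alone can equal the correction term on both branches. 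Concretely, for $\tilde{t}<0$ one has $X^{1/n}=|\tilde{t}|=-\tilde{t}$ and $X^{(n+1)/n}=|\tilde{t}|^{\,n+1}=-\tilde{t}^{\,n+1}$, so the identity $\Phi(\tilde{t}^{\,n},Y)=(\tilde{t}^{\,n},\,Y-\tilde{t}^{\,n+1}\tilde{q}(\tilde{t}))$ fails on the negative branch; for $n=2$ and the curve $(t^2,\,t^3+t^5)$ your map \emph{adds} $t^5$ on $t<0$ instead of removing it, producing $(t^2,\,t^3+2t^5)$ there. Your argument is fine for $n$ odd but breaks exactly in the even case, which includes the ordinary cusp $n=2$ and the paper's main case $n=4$. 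The paper avoids this by performing the $C^1$ change on the \emph{second} coordinate, solving $y=zY_0(z^{1/(n+1)})$ for $z$: when $n$ is even, $1/(n+1)$ is an odd root, hence single-valued and sign-preserving, so it recovers $\tilde{t}$ on both branches. (The two devices are complementary in parity, so a complete proof must switch between the $n$-th root of $X$ and the $(n+1)$-th root of $Y$ according to the parity of $n$, or use some equivalent trick.) There is also an exponent slip --- the term to be absorbed is $\tilde{t}^{\,n+2}\tilde{q}(\tilde{t})$, not $\tilde{t}^{\,n+1}\tilde{q}(\tilde{t})$ --- but that is cosmetic.

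A secondary issue is the ``only if'' direction: you apply the chain rule $k$ times to the $C^1$ diffeomorphisms $\Psi$ and $\psi$, but a $C^1$ map has no derivatives of order $\geq 2$, so the expansion of $\gamma_2^{(k)}(0)$ you invoke is not available. The paper shares this weakness (its invariance proposition is proved only for $C^{\infty}$ changes of coordinates), so I flag it without holding it against you: condition \eqref{align:C1-diffeo-classification01} can be recovered from order-of-vanishing estimates ($|\Psi(w)|\asymp|w|$ and $|\psi(t)|\asymp|t|$ give $\gamma(t)=O(|t|^n)$, which kills the low-order Taylor coefficients of the smooth curve $\gamma$), but the determinant condition \eqref{align:C1-diffeo-classification02} carries order-$(n+1)$ information and needs a separate argument under mere $C^1$ equivalence.
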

For the proof of this theorem, we divide it into several steps.

\subsection{\bf Invariants of $(n, n+1)$ cusps}

\begin{proposition}\label{prop:coordinate-condition01}
The conditions $(\ref{align:C1-diffeo-classification01})$ and $(\ref{align:C1-diffeo-classification02})$ do not depend on the choices of coordinates on the source nor on the target. 
\end{proposition}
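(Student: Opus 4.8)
The plan is to verify invariance separately under changes of coordinates on the source and on the target, since the general case follows by composing the two. The key technical tool is the chain rule together with Faà di Bruno's formula, which shows that when $\gamma'(0) = \cdots = \gamma^{(n-1)}(0) = \bm{0}$, the first nonvanishing derivative transforms in a particularly simple way: all the "correction terms" involving lower-order derivatives of $\gamma$ vanish.

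\medskip

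\textbf{Source reparametrisation.} Let $\psi : (\R,0) \to (\R,0)$ be a diffeomorphism germ and set $\til\gamma = \gamma \circ \psi$. Writing $a = \psi'(0) \neq 0$, Faà di Bruno's formula gives, for each $k$,
\begin{align*}
\til\gamma^{(k)}(0) = a^k \gamma^{(k)}(0) + (\text{terms involving } \gamma^{(j)}(0) \text{ with } j < k).
\end{align*}
An induction on $k$ from $1$ to $n-1$ then shows $\til\gamma'(0) = \cdots = \til\gamma^{(n-1)}(0) = \bm{0}$: at each stage the correction terms are linear combinations of the already-vanishing lower derivatives. Feeding this back into Faà di Bruno at orders $n$ and $n+1$, the correction terms again vanish, so
\begin{align*}
\til\gamma^{(n)}(0) = a^n \gamma^{(n)}(0), \qquad \til\gamma^{(n+1)}(0) = a^{n+1}\gamma^{(n+1)}(0) + c\,\gamma^{(n)}(0)
\end{align*}
for some constant $c$ depending on $\psi$ (coming from the single term $\binom{n+1}{?}\psi''(0)\cdots$ paired with $\gamma^{(n)}$). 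Hence
\begin{align*}
\det\bigl(\til\gamma^{(n)}(0), \til\gamma^{(n+1)}(0)\bigr) = \det\bigl(a^n\gamma^{(n)}(0),\, a^{n+1}\gamma^{(n+1)}(0) + c\,\gamma^{(n)}(0)\bigr) = a^{2n+1}\det\bigl(\gamma^{(n)}(0),\gamma^{(n+1)}(0)\bigr),
\end{align*}
since the $c\,\gamma^{(n)}(0)$ contribution is killed by the determinant. As $a \neq 0$, condition $(\ref{align:C1-diffeo-classification02})$ is preserved (and so is its negation).

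\medskip

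\textbf{Target coordinate change.} Let $\Psi : (\R^2,\bm{0}) \to (\R^2,\bm{0})$ be a diffeomorphism germ and put $\wtil\gamma = \Psi \circ \gamma$; let $A = d\Psi_{\bm 0} \in GL(2,\R)$. Again by the chain rule / Faà di Bruno, $\wtil\gamma^{(k)}(0) = A\,\gamma^{(k)}(0) + (\text{terms polynomial in } \gamma^{(j)}(0),\ j<k,\text{ with coefficients the higher derivatives of }\Psi)$. Since $\gamma^{(1)}(0) = \cdots = \gamma^{(n-1)}(0) = \bm 0$, an induction shows these lower derivatives of $\wtil\gamma$ all vanish as well, and then at orders $n$ and $n+1$ every correction term contains at least one factor $\gamma^{(j)}(0)$ with $j \le n-1$, hence vanishes; so $\wtil\gamma^{(n)}(0) = A\gamma^{(n)}(0)$ and $\wtil\gamma^{(n+1)}(0) = A\gamma^{(n+1)}(0)$. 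Therefore
\begin{align*}
\det\bigl(\wtil\gamma^{(n)}(0), \wtil\gamma^{(n+1)}(0)\bigr) = \det A \cdot \det\bigl(\gamma^{(n)}(0), \gamma^{(n+1)}(0)\bigr),
\end{align*}
and $\det A \neq 0$ gives the claim. Combining the two cases, both $(\ref{align:C1-diffeo-classification01})$ and $(\ref{align:C1-diffeo-classification02})$ are invariant under arbitrary coordinate changes on source and target.

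\medskip

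\textbf{Main obstacle.} None of the steps is deep; the only point requiring care is the bookkeeping in Faà di Bruno — specifically, confirming that at orders $n$ and $n+1$ every term other than the "leading" one genuinely involves some $\gamma^{(j)}(0)$ with $j \le n-1$ (for the source case, one must additionally track the single surviving cross-term $c\,\gamma^{(n)}(0)$ and note it is annihilated by the determinant). I would state this as a short lemma — "if the first $n-1$ derivatives of $\gamma$ vanish at $0$, then under any reparametrisation the first $n-1$ derivatives of the composite vanish and its $n$-th derivative is multiplied by $\psi'(0)^n$" — and then the proposition is immediate.
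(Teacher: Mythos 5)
Your proposal is correct and follows essentially the same route as the paper: the paper likewise splits the verification into the source reparametrisation (via Fa\`a di Bruno, obtaining $\tilde\gamma^{(n)}(0)=(\psi'(0))^n\gamma^{(n)}(0)$ and a cross term $C\gamma^{(n)}(0)$ annihilated by the determinant, giving the factor $(\psi'(0))^{2n+1}$) and the target change (via the Leibniz expansion of $J(t)\gamma'(t)$, where the vanishing of $\gamma'(0),\dots,\gamma^{(n-1)}(0)$ and hence of $J'(0)$ kills all correction terms, giving the factor $\det J(0)$). The only cosmetic difference is that the paper organises the two cases as separate lemmas and uses the Leibniz rule on the Jacobian rather than the full multivariate Fa\`a di Bruno for the target case.
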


For the proof, it is sufficient to prove the following two lemmas. 

\begin{lemma}\label{lemma:1}

The condition $(\ref{align:C1-diffeo-classification01})$ does not depend on the choices of $C^{\infty}$ local diffeomorphisms $\Phi : (\mathbb{R}^2,0)$ $\rightarrow$ $(\mathbb{R}^2,0)$ nor $\psi : (\mathbb{R},0)$ $\rightarrow$ $(\mathbb{R},0)$. 
\end{lemma}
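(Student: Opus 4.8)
The plan is to show that condition \eqref{align:C1-diffeo-classification01}, namely $\gamma'(0)=\cdots=\gamma^{(n-1)}(0)=\mathbf 0$, is preserved when we replace $\gamma$ by $\Psi\circ\gamma\circ\psi^{-1}$ for $C^\infty$ local diffeomorphism germs $\Psi:(\R^2,0)\to(\R^2,0)$ and $\psi:(\R,0)\to(\R,0)$. Since any such composition can be built up by applying a source change alone and then a target change alone, it suffices to treat the two independently.

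First I would handle the source change. Write $\tilde\gamma=\gamma\circ\psi^{-1}$ and set $\phi=\psi^{-1}$, a $C^\infty$ diffeomorphism germ of $(\R,0)$ with $\phi(0)=0$ and $\phi'(0)\neq 0$. By the Fa\`a di Bruno formula, for each $k\geq 1$ the derivative $\tilde\gamma^{(k)}(0)$ is a linear combination of $\gamma'(0),\gamma''(0),\dots,\gamma^{(k)}(0)$ with coefficients that are polynomials in the derivatives of $\phi$ at $0$; in particular $\tilde\gamma^{(k)}(0)=\sum_{j=1}^{k} c_{k,j}\,\gamma^{(j)}(0)$ where $c_{k,k}=(\phi'(0))^k\neq 0$. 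Hence if $\gamma^{(j)}(0)=\mathbf 0$ for all $1\le j\le n-1$, then $\tilde\gamma^{(k)}(0)=\mathbf 0$ for all $1\le k\le n-1$; and conversely, since $c_{k,k}\neq0$, the triangular system can be inverted to recover $\gamma^{(j)}(0)=\mathbf 0$ for $1\le j\le n-1$. So \eqref{align:C1-diffeo-classification01} is a two-sided invariant under source changes.

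Next I would handle the target change. Write $\hat\gamma=\Psi\circ\gamma$ with $\Psi:(\R^2,0)\to(\R^2,0)$ a $C^\infty$ diffeomorphism germ, so $\Psi(0)=0$ and $d\Psi_0$ invertible. Again by the chain rule / Fa\`a di Bruno, $\hat\gamma^{(k)}(0)$ is $d\Psi_0(\gamma^{(k)}(0))$ plus a sum of terms each of which is a multilinear expression in the higher derivatives of $\Psi$ at $0$ applied to tuples $(\gamma^{(i_1)}(0),\dots,\gamma^{(i_r)}(0))$ with $r\ge 2$ and $i_1+\cdots+i_r=k$, $i_s\ge 1$. I would prove by induction on $k$ from $1$ to $n-1$ that $\gamma'(0)=\cdots=\gamma^{(k)}(0)=\mathbf 0$ implies $\hat\gamma'(0)=\cdots=\hat\gamma^{(k)}(0)=\mathbf 0$: in the base case $\hat\gamma'(0)=d\Psi_0(\gamma'(0))=\mathbf0$; in the inductive step every correction term with $r\ge2$ vanishes because at least one factor $\gamma^{(i_s)}(0)$ with $i_s\le k-1$ is zero, and the leading term $d\Psi_0(\gamma^{(k)}(0))$ is zero by hypothesis. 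The converse follows by applying the same argument to $\Psi^{-1}$, which is again a $C^\infty$ diffeomorphism germ.

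The only real subtlety — the "main obstacle," though it is more bookkeeping than difficulty — is making the structure of the Fa\`a di Bruno expansions explicit enough to see the triangularity (for the source case) and the "higher-order correction terms have a vanishing factor" property (for the target case) without writing out the full formulas. I would state precisely the form $\tilde\gamma^{(k)}(0)=\sum_{j=1}^k c_{k,j}\gamma^{(j)}(0)$ with $c_{k,k}\ne 0$ and the analogous statement for $\hat\gamma^{(k)}(0)$, justify them by the chain rule and induction, and then the conclusion is immediate; I would not grind through the combinatorial coefficients, since only their support and the nonvanishing of the leading one matter. Combining the source and target steps gives Lemma \ref{lemma:1}.
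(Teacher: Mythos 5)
Your proposal is correct and follows essentially the same route as the paper: Fa\`a di Bruno for the source change and the observation that every term in the expansion of $(\Phi\circ\gamma)^{(k)}(0)$ contains a factor $\gamma^{(j)}(0)$ with $j\le k$ for the target change (the paper organizes the latter via the Leibniz rule applied to $\hat\gamma'(t)=J(t)\gamma'(t)$, which is the same computation). Your explicit remarks on triangularity and on obtaining the converse by applying the argument to the inverse diffeomorphisms are left implicit in the paper but add nothing essentially new.
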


\begin{lemma}\label{lemma:2}
The condition $(\ref{align:C1-diffeo-classification02})$ does not depend on the choices of $C^{\infty}$ local diffeomorphisms $\Phi : (\mathbb{R}^2,0)$ $\rightarrow$ $(\mathbb{R}^2,0)$ nor $\psi : (\mathbb{R},0)$ $\rightarrow$ $(\mathbb{R},0)$.
\end{lemma}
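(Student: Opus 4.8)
The plan is to treat the two lemmas together by a single computation using the chain rule and Faà di Bruno's formula, keeping track only of the lowest-order information. For Lemma \ref{lemma:1}: let $\til\gamma = \Phi \circ \gamma \circ \psi^{-1}$, write $s = \psi(t)$, and set $\eta = \gamma \circ \psi^{-1}$ so that $\til\gamma = \Phi \circ \eta$. First I would observe that condition \eqref{align:C1-diffeo-classification01} is equivalent to the statement that the Taylor expansion of $\gamma$ at $0$ has no terms of order $1, \dots, n-1$, i.e. $\gamma(t) = \gamma(0) + \gamma^{(n)}(0) t^n/n! + O(t^{n+1})$. Reparametrising on the source by an orientation-either diffeomorphism $\psi$ with $\psi(0)=0$ and $\psi'(0) = a \neq 0$ replaces $t$ by a power series $a^{-1} t + \cdots$ in the inverse, so $\eta(t) = \gamma(0) + \gamma^{(n)}(0) (a^{-1} t)^n/n! + O(t^{n+1})$; in particular the first nonvanishing derivative is still at order $n$, and $\eta^{(n)}(0) = a^{-n}\gamma^{(n)}(0)$. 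Then applying a local diffeomorphism $\Phi$ of the target with $\Phi(\0) = \0$ and derivative $d\Phi_{\0} = A \in GL_2(\mathbb{R})$: since $\eta(t) - \gamma(0)$ is $O(t^n)$, composing with $\Phi$ gives $\til\gamma(t) = A\bigl(\eta(t) - \gamma(0)\bigr) + (\text{quadratic and higher in }\eta - \gamma(0)) = A\,\eta^{(n)}(0)\, t^n/n! + O(t^{n+1})$, because every nonlinear term of $\Phi$ contributes something of order at least $2n \geq n+1$. Hence $\til\gamma'(0) = \cdots = \til\gamma^{(n-1)}(0) = \0$, proving Lemma \ref{lemma:1}, and along the way I would record the transformation law
\begin{equation*}
\til\gamma^{(n)}(0) = a^{-n}\, A\, \gamma^{(n)}(0).
\end{equation*}

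For Lemma \ref{lemma:2} I would assume \eqref{align:C1-diffeo-classification01} holds (legitimate, since by Lemma \ref{lemma:1} it is preserved) and compute $\til\gamma^{(n+1)}(0)$ to the same order. Writing $\eta(t) - \gamma(0) = \bv_n t^n/n! + \bv_{n+1} t^{n+1}/(n+1)! + O(t^{n+2})$ with $\bv_n = \eta^{(n)}(0)$, $\bv_{n+1} = \eta^{(n+1)}(0)$, the nonlinear terms of $\Phi$ again start at order $2n$, so
\begin{equation*}
\til\gamma(t) = A\bv_n \frac{t^n}{n!} + A\bv_{n+1}\frac{t^{n+1}}{(n+1)!} + O(t^{n+2}),
\end{equation*}
giving $\til\gamma^{(n+1)}(0) = A\,\eta^{(n+1)}(0)$. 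For the reparametrisation part, a short Faà di Bruno computation with $\psi^{-1}(t) = a^{-1}t + b\, t^2 + \cdots$ shows $\eta^{(n)}(0) = a^{-n}\gamma^{(n)}(0)$ and $\eta^{(n+1)}(0) = a^{-(n+1)}\gamma^{(n+1)}(0) + c\,\gamma^{(n)}(0)$ for some constant $c$ depending on $a,b,n$. Therefore
\begin{equation*}
\det\bigl(\til\gamma^{(n)}(0), \til\gamma^{(n+1)}(0)\bigr) = \det\bigl(A\eta^{(n)}(0),\, A\eta^{(n+1)}(0)\bigr) = \det A \cdot \det\bigl(\eta^{(n)}(0), \eta^{(n+1)}(0)\bigr),
\end{equation*}
and the $c\,\gamma^{(n)}(0)$ term drops out of the determinant, leaving $\det A \cdot a^{-(2n+1)} \det(\gamma^{(n)}(0), \gamma^{(n+1)}(0))$. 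Since $\det A \neq 0$ and $a \neq 0$, the nonvanishing of the determinant is preserved, proving Lemma \ref{lemma:2}.

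The only genuinely delicate point is the bookkeeping that guarantees the nonlinear part of $\Phi$ never pollutes orders $\le n+1$: one must verify that if $\eta - \gamma(0)$ vanishes to order $n$ then any monomial $(\eta - \gamma(0))^{\alpha}$ with $|\alpha| \ge 2$ vanishes to order $\ge 2n \ge n+1$, which uses $n \ge 1$ — harmless here since $(n,n+1)$-cusps have $n \ge 2$. The Faà di Bruno expansion of $\eta = \gamma \circ \psi^{-1}$ is the main (routine) calculation; I would carry it out just far enough to extract the two leading coefficients and the crucial fact that the correction to $\eta^{(n+1)}(0)$ is a scalar multiple of $\gamma^{(n)}(0)$, which is exactly what makes the determinant transform by a nonzero factor. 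With both lemmas in hand, Proposition \ref{prop:coordinate-condition01} is immediate, since an arbitrary pair of coordinate changes is a composition of one on the source and one on the target.
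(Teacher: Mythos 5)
Your argument is correct and follows essentially the same route as the paper: both reduce the statement to the transformation laws $\wtil{\gamma}^{(n)}(0)=(\text{nonzero scalar})\,A\,\gamma^{(n)}(0)$ and $\wtil{\gamma}^{(n+1)}(0)=(\text{nonzero scalar})\,A\,\gamma^{(n+1)}(0)+c\,A\,\gamma^{(n)}(0)$, after which the determinant acquires the nonzero factor $\det A\cdot(\psi'(0))^{\pm(2n+1)}$ and the $c\,\gamma^{(n)}(0)$ term drops out --- the paper extracts these from the Leibniz rule (observing $J'(0)=\0$) and Fa\`a di Bruno, while you extract them by jet-order counting, which is only a presentational difference. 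One small precision: to keep the nonlinear part of $\Phi$ out of the order-$(n+1)$ Taylor coefficient you need $2n\ge n+2$, i.e.\ $n\ge2$, not merely $2n\ge n+1$ as written; since you invoke $n\ge2$ at the end, the argument is complete.
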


We will use the following fact to prove Lemma \ref{lemma:1} and Lemma \ref{lemma:2}.

\begin{fact}(Fa\`{a} di Bruno's formula)\label{fact:chain-rule}. Let $f, g : \mathbb{R} \rightarrow \mathbb{R}$ be $C^{\infty}$ functions and $m\in \mathbb{Z}_{>0}$. Then the following holds.

\begin{equation*}
(f \circ g)^{(m)} = {\displaystyle \sum_{k =1}^{m}}  \sum_{\substack{ i_1 + \cdots + i_k = m \\ i_1,\cdots, i_k \neq 0} }\dfrac{m!}{k!} (f^{(k)} \circ g) \prod_{l = 1}^{k} \dfrac{g^{(i_{l})}}{i_{l}!}.
\end{equation*}

\end{fact}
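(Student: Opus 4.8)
The plan is to prove the identity by induction on the order $m$, regarding the right-hand side as a sum indexed by the compositions $(i_1,\dots,i_k)$ of $m$ into positive parts and showing that this expression is stable under one differentiation. For the base case $m=1$ there is a single composition $(i_1)=(1)$ with $k=1$, and the right-hand side collapses to $\frac{1!}{1!}(f'\circ g)\frac{g'}{1!}=(f'\circ g)\,g'$, which is the ordinary chain rule. Hence it suffices to show that if the formula holds at order $m$, then differentiating both sides once reproduces the formula at order $m+1$.

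Write $S_m$ for the right-hand side at order $m$. Differentiating a single summand $\frac{m!}{k!}(f^{(k)}\circ g)\prod_{l=1}^{k}\frac{g^{(i_l)}}{i_l!}$ by the product rule produces two kinds of terms. First, differentiating the factor $f^{(k)}\circ g$ gives $(f^{(k+1)}\circ g)\,g'$ times the same product; since $g'=g^{(1)}/1!$, this is exactly the summand attached to the composition $(i_1,\dots,i_k,1)$ of $m+1$ into $k+1$ parts, so a new part of size $1$ is appended and $k$ rises to $k+1$. Second, differentiating the $p$-th factor $g^{(i_p)}/i_p!$ replaces $i_p$ by $i_p+1$ while keeping the denominator $i_p!$; using the bookkeeping identity $g^{(i_p+1)}/i_p! = (i_p+1)\,g^{(i_p+1)}/(i_p+1)!$, this equals $(i_p+1)$ times the summand attached to the composition $(i_1,\dots,i_p+1,\dots,i_k)$ of $m+1$ into $k$ parts.

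The delicate point, and the main obstacle, is that each summand depends only on the \emph{multiset} of part sizes $\{i_1,\dots,i_k\}$ and not on their order, so the two families above do not match $S_{m+1}$ composition by composition; they agree only after one groups the compositions of $m+1$ by their type. I therefore fix a type having $m_j$ parts equal to $j$ (so $k=\sum_j m_j$ and $\sum_j j\,m_j=m+1$) and sum the contributions over all $k!/\prod_j m_j!$ compositions of that type. The appended-$1$ family feeds only compositions whose last part is $1$, whose fraction among all compositions of the type is $m_1/k$, and after accounting for the increase $k\mapsto k+1$ this yields an effective factor $m_1$; the incremented-part family contributes, per composition, the weight $\sum_{p\,:\,i_p\ge 2} i_p=(m+1)-m_1$. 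The two effective factors add to $m_1+\bigl((m+1)-m_1\bigr)=m+1$, which is precisely what turns $m!/k!$ into $(m+1)!/k!$, and comparing with the common count $k!/\prod_j m_j!$ of compositions of the type shows that the coefficient of $(f^{(k)}\circ g)\prod_j\bigl(g^{(j)}/j!\bigr)^{m_j}$ agrees on both sides. This closes the induction.

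An equivalent and more transparent way to organize the same computation is to first establish the set-partition form $(f\circ g)^{(m)}=\sum_{\pi\in\Pi_m}(f^{(|\pi|)}\circ g)\prod_{B\in\pi}g^{(|B|)}$, where $\Pi_m$ runs over partitions of $\{1,\dots,m\}$ into nonempty blocks; there the inductive step is the clean bijection sending each partition of $\{1,\dots,m+1\}$ either to the partition obtained by deleting the singleton $\{m+1\}$ or to the one obtained by removing $m+1$ from its (non-singleton) block. Converting this form into the stated composition form is then the standard counting identity that the number of partitions of $\{1,\dots,m\}$ of a given type equals $m!/\bigl(\prod_j (j!)^{m_j}\,m_j!\bigr)$, which reconciles the factors $m!/k!$ and $1/i_l!$ with the symmetric products.
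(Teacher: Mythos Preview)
Your induction argument is correct: the base case is the ordinary chain rule, and your bookkeeping for the inductive step---splitting the derivative into ``append a part of size $1$'' and ``increment an existing part,'' then matching coefficients type by type via the count $k!/\prod_j m_j!$ of compositions of a given multiset type---does yield the factor $m_1+\bigl((m+1)-m_1\bigr)=m+1$ that promotes $m!/k!$ to $(m+1)!/k!$. The alternative set-partition formulation you sketch at the end is likewise a standard and clean route.

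However, there is nothing to compare against: the paper does not prove this statement. It is recorded as a \emph{Fact} (Fa\`a di Bruno's formula) and invoked without argument in the proofs of Lemmas~\ref{lemma:1} and~\ref{lemma:2}. So your proposal supplies a proof where the paper simply cites a classical identity; the paper's ``approach'' is to take the formula as known.
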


\textit{Proof of Lemma \ref{lemma:1}}.  First we show that the condition $(\ref{align:C1-diffeo-classification01})$ does not depend on the choice of a $C^{\infty}$ local diffeomorphism $\Phi$. We put $\Phi = ( \Phi_{1}, \Phi_{2})$, $\gamma(t) = ( x(t), y(t) )$. Setting $\hat{\gamma}(t) := \Phi \circ \gamma(t) = \Phi( x(t), y(t) )$, we have $\hat{\gamma}'(t)$ = $J(t) \gamma'(t)$, where $J(t) := J(x(t), y(t)) =
\begin{pmatrix}
(\Phi_{1})_{x} (x(t), y(t))&(\Phi_{1})_{y} (x(t), y(t))\\
(\Phi_{2})_{x} (x(t), y(t))&(\Phi_{2})_{y} (x(t), y(t))\\
\end{pmatrix} $. Thus we see that 

\begin{equation}\label{eqn:1}
\hat{\gamma}^{(n)}(t) = \displaystyle \sum_{k = 0}^{n-1} {}_{n-1} C _{k} J(t)^{(n-k-1)} \gamma^{(k+1)}(t).
\end{equation}
By $\gamma'(0) = \gamma''(0) = \cdots = \gamma^{(n-1)}(0) = \bold{0}$, we get 
\begin{align*}
\hat{\gamma}'(0) = \hat{\gamma}''(0) = \cdots = \hat{\gamma}^{(n-1)}(0) = \bold{0}.
\end{align*}
Next we show that the condition $(\ref{align:C1-diffeo-classification01})$ does not depend on the choice of a $C^{\infty}$ local diffeomorphism $\psi$ . We set $\tilde{\gamma}(s) := \gamma \circ \psi(s) = ( x(\psi(s)), y(\psi(s)) )$. Then, by Fact \ref{fact:chain-rule}, we see that 

\begin{align*}
\tilde{\gamma}^{(n)}(0) = \displaystyle \sum_{k=1}^{n} \sum_{\substack{ i_1 + \cdots + i_k = n \\ i_1,\cdots, i_k \neq 0} } \Bigl( \dfrac{n!}{k!} (x^{(k)}(0)) \prod_{l = 1}^{k} \dfrac{\psi^{(i_{l})} (0) }{i_{l}!},  \dfrac{n!}{k!} (y^{(k)} (0)) \prod_{l = 1}^{k} \dfrac{\psi^{(i_{l})} (0)} {i_{l}!} \Bigl).  
\end{align*}
By $\gamma'(0) = \gamma''(0) = \cdots = \gamma^{(n-1)}(0) = \bold{0}$, we get 
\begin{align*}
\tilde{\gamma}'(0) = \hat{\gamma}''(0) = \cdots = \hat{\gamma}^{(n-1)}(0) = \bold{0}.
\end{align*}
Hence we have the assertion.\qed\\

\textit{Proof of Lemma \ref{lemma:2}}. First we show that the condition $(\ref{align:C1-diffeo-classification02})$ does not depend on the choice of a $C^{\infty}$ local diffeomorphism $\Phi$. By (\ref{eqn:1}), we have

\begin{equation}\label{eqn:2}
\hat{\gamma}^{(n+1)}(t) = \displaystyle \sum_{k = 0}^{n} {}_{n} C _{k} J(t)^{(n-k)} \gamma^{(k+1)}(t).
\end{equation}
By $(\ref{eqn:1})$ and $(\ref{eqn:2})$, we get $\hat{\gamma}^{(n)}(0) = J(0) \gamma^{(n)}(0)$ and $\hat{\gamma}^{(n+1)}(t) = {}_{n} C _{n-1} J'(0) \gamma^{(n)}(0) + J(0) \gamma^{(n+1)}(0)$.  Since $J'(t) = J_{x}(t) x'(t) + J_{y}(t) y'(t)$ and $\gamma'(0) = \bold{0}$, we have $J'(0) = \bold{0}$. Thus we get $\hat{\gamma}^{(n+1)}(t) = J(0) \gamma^{(n+1)}(0)$. 
Therefore we obtain
\begin{align*}
\det{(\hat{\gamma}^{(n)}(0), \hat{\gamma}^{(n+1)}(0) )} = \det{J(0)} \det{(\gamma^{(n)}(0), \gamma^{(n+1)}(0))} \neq 0. 
\end{align*}

Next we show that the condition $(\ref{align:C1-diffeo-classification02})$ does not depend on the choice of a local diffeomorphism $\psi$. By Fact \ref{fact:chain-rule}, we have

\begin{align*}
\tilde{\gamma}^{(n+1)}(0) =\displaystyle \sum_{k=1}^{n+1} \sum_{\substack{ i_1 + \cdots + i_k = n+1 \\ i_1,\cdots, i_k \neq 0} } \biggl( \dfrac{(n+1)!}{k!} (x^{(k)}(0)) \prod_{l = 1}^{k} \dfrac{\psi^{(i_{l})} (0) }{i_{l}!},  \dfrac{(n+1)!}{k!} (y^{(k)} (0) ) \prod_{l = 1}^{k} \dfrac{\psi^{(i_{l})} (0)} {i_{l}!}   \biggl). 
\end{align*}
Thus we get 
\begin{align}\label{align:Faa di chain rule parameter}
\tilde{\gamma}^{(n)}(0) = (\psi'(0))^{n} \gamma^{(n)} (0), \quad \tilde{\gamma}^{(n+1)}(0) = C \gamma^{(n)} (0) + (\psi'(0))^{n+1} \gamma^{(n+1)} (0),
\end{align}
where $C$ is some constant. Hence we obtain 
\begin{align*}
\det{(\tilde{\gamma}^{(n)}(0),\tilde{\gamma}^{(n+1)}(0) )} = (\psi'(0))^{2n+1} \det{(\gamma^{(n)}(0), \gamma^{(n+1)}(0))} \neq 0 
\end{align*}
and have the assertion. \qed

\begin{remark}
Proposition \ref{prop:coordinate-condition01} implies that the conditions $(\ref{align:C1-diffeo-classification01})$ and $(\ref{align:C1-diffeo-classification02})$ are the invariants of $(n, n+1)$ cusps in the sense of $\mathcal{A}$-equivalence. Moreover, by using Fact \ref{fact:evolute of one singlarity down} and Corollary \ref{coro:Evolute of regualr points}, we get the following result.
\end{remark}

\begin{theorem}\label{coro:properties of (n,n+1)cusps}
If $\gamma$ is $\mathcal{A}$-$equivalent$ to $Cusp_{n,n+1}(t) = (t^n, t^{n+1})$ at $t=0$, then $t = 0$ is a singular point of $Ev(\gamma), \cdots Ev^{n-2}(\gamma)$ and a regular point of $Ev^{n-1}(\gamma)$.
\end{theorem}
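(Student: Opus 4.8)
The plan is to deduce Theorem \ref{coro:properties of (n,n+1)cusps} from Corollary \ref{coro:Evolute of regualr points}, which already handles exactly the derivative pattern in question. The bridge we need is that $\mathcal{A}$-equivalence to $Cusp_{(n,n+1)}$ forces the source-derivative conditions $\gamma'(0) = \cdots = \gamma^{(n-1)}(0) = \mathbf 0$ and $\gamma^{(n)}(0) \neq \mathbf 0$, together with $\gamma$ being a front without inflection points near $t=0$. Once these are in hand, Corollary \ref{coro:Evolute of regualr points} (with its hypothesis $d^2\gamma/dt^2(0) = \cdots = d^{n-1}\gamma/dt^{n-1}(0) = \mathbf 0$, $d^n\gamma/dt^n(0) \neq \mathbf 0$ — automatic here since $\gamma'(0)=\mathbf 0$ too) gives precisely the conclusion: $t=0$ is a singular point of $Ev(\gamma), \dots, Ev^{n-2}(\gamma)$ and a regular point of $Ev^{n-1}(\gamma)$.

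First I would observe that $\gamma \sim_{\mathcal A} Cusp_{(n,n+1)}$ in particular implies $\gamma \sim_{C^1} Cusp_{(n,n+1)}$, so by Theorem \ref{thm:C1-diffeo-classification} (the forward direction) we get conditions \eqref{align:C1-diffeo-classification01} and \eqref{align:C1-diffeo-classification02}: $\gamma'(0) = \gamma''(0) = \cdots = \gamma^{(n-1)}(0) = \mathbf 0$ and $\det(\gamma^{(n)}(0), \gamma^{(n+1)}(0)) \neq 0$. The latter in particular forces $\gamma^{(n)}(0) \neq \mathbf 0$, which is the leading-order nondegeneracy needed to invoke Corollary \ref{coro:Evolute of regualr points}.

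Second I would verify the standing hypotheses of Corollary \ref{coro:Evolute of regualr points}, namely that $\gamma$ is a front without inflection points in a neighbourhood of $t=0$. That $Cusp_{(n,n+1)}(t) = (t^n, t^{n+1})$ is a front is standard — one exhibits the unit normal $\nu$ along it, e.g.\ from $\gamma'(t) = (n t^{n-1}, (n+1) t^n)$ one takes the direction of $(-(n+1)t, n)$ suitably normalized, which extends $C^\infty$ across $t=0$ and makes $(\gamma,\nu)$ an immersion; being a front is preserved under $\mathcal A$-equivalence. The absence of inflection points (i.e.\ $\ell(t) \neq 0$, equivalently $\kappa \neq 0$ on regular points) near $t=0$ for $(t^n, t^{n+1})$ is a direct computation: $\det(\gamma'(t), \gamma''(t)) = \det\!\bigl((nt^{n-1}, (n+1)t^n), (n(n-1)t^{n-2}, (n+1)nt^{n-1})\bigr)$ has lowest-order term a nonzero multiple of $t^{2n-3}$, so it is nonzero for $t \neq 0$ small, and one checks the Legendrian curvature $\ell$ of the front does not vanish at $t=0$ either (alternatively, invoke that $(n,n+1)$-cusps are fronts without inflection, as asserted in the introduction). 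This "front without inflection points" invariance under $\mathcal A$-equivalence is the point one must state carefully.

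Finally, having checked that $\gamma$ is a front without inflection points and satisfies $d^2\gamma/dt^2(0) = \cdots = d^{n-1}\gamma/dt^{n-1}(0) = \mathbf 0$, $d^n\gamma/dt^n(0) \neq \mathbf 0$, I apply Corollary \ref{coro:Evolute of regualr points} directly to conclude that $t=0$ is a singular point of $Ev(\gamma), \dots, Ev^{n-2}(\gamma)$ and a regular point of $Ev^{n-1}(\gamma)$, which is the assertion. The only genuine obstacle is the bookkeeping in step two: making sure that "front without inflection points near $t=0$" is an $\mathcal A$-invariant property and that the representative $(t^n,t^{n+1})$ has it — everything else is an immediate chaining of Theorem \ref{thm:C1-diffeo-classification} and Corollary \ref{coro:Evolute of regualr points}.
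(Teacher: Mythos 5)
Your proposal is correct and follows essentially the same route as the paper, which simply invokes Proposition \ref{prop:coordinate-condition01} (invariance of conditions \eqref{align:C1-diffeo-classification01} and \eqref{align:C1-diffeo-classification02} under $\mathcal{A}$-equivalence) together with Fact \ref{fact:evolute of one singlarity down} and Corollary \ref{coro:Evolute of regualr points}. Your extra step of checking that $\gamma$ is a front without inflection points near $t=0$ (so that the evolutes and Corollary \ref{coro:Evolute of regualr points} actually apply) is a hypothesis the paper leaves implicit, and you handle it correctly.
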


As an application of Proposition \ref{prop:coordinate-condition01}, we can get the following result with respect to $(n,n+1)$-cusps.

\begin{corollary}\label{coro:negative criterion for (n,n+1)-cusps}
Let $\gamma : 0 \in I \rightarrow \mathbb{R}^2$ be a front without inflection points. Assume that $t=0$ is a singular point of $\gamma$, that is, $(d \gamma / dt) (0) = \bold{0}$. If $t=0$ is a singular point of $Ev(\gamma)$, $\cdots$, $Ev^{n-1} (\gamma)$, then $\gamma$ is not $\mathcal{A}$-equivalent to $Cusp_{(n,n+1)}$ at $t=0$.
\end{corollary}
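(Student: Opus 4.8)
The plan is to argue by contraposition using Theorem~\ref{coro:properties of (n,n+1)cusps}. Suppose, for the sake of contradiction, that $\gamma$ is $\mathcal{A}$-equivalent to $Cusp_{(n,n+1)}$ at $t=0$ while simultaneously $t=0$ is a singular point of $Ev(\gamma),\dots,Ev^{n-1}(\gamma)$. By Theorem~\ref{coro:properties of (n,n+1)cusps}, the $\mathcal{A}$-equivalence forces $t=0$ to be a \emph{regular} point of $Ev^{n-1}(\gamma)$, which contradicts the hypothesis that it is a singular point of $Ev^{n-1}(\gamma)$. Hence no such $\gamma$ can be $\mathcal{A}$-equivalent to $Cusp_{(n,n+1)}$.

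To make this rigorous I would first record that the hypotheses of Corollary~\ref{coro:negative criterion for (n,n+1)-cusps} match those needed to invoke the earlier machinery: $\gamma$ is a front without inflection points and $t=0$ is a singular point of $\gamma$, so Fact~\ref{fact:evolute of one singlarity down} and Corollary~\ref{coro:Evolute of regualr points} apply verbatim to $\gamma$ at $t_{0}=0$. Next I would note that Theorem~\ref{coro:properties of (n,n+1)cusps} already packages the implication ``$\mathcal{A}$-equivalent to $Cusp_{(n,n+1)}$ $\Rightarrow$ $t=0$ is singular for $Ev^{1},\dots,Ev^{n-2}$ and regular for $Ev^{n-1}$,'' so the proof reduces to a one-line clash of the $Ev^{n-1}$ clause with the standing assumption. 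I would write the argument in the contrapositive form exactly as in the statement: assuming $t=0$ is singular for $Ev^{1}(\gamma),\dots,Ev^{n-1}(\gamma)$, the regularity of $Ev^{n-1}(\gamma)$ guaranteed by Theorem~\ref{coro:properties of (n,n+1)cusps} fails, so $\gamma$ cannot be $\mathcal{A}$-equivalent to $Cusp_{(n,n+1)}$.

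There is essentially no computational obstacle here; the content has been pushed into the preceding results. The one point that deserves a sentence of care is the boundary case $n=1$ (where the claim is vacuous or degenerate) and, more importantly, whether ``singular point of $Ev^{n-1}(\gamma)$'' is even well-defined under the hypotheses — i.e. one should confirm that the absence of inflection points of $\gamma$ ensures all the intermediate evolutes $Ev^{1}(\gamma),\dots,Ev^{n-1}(\gamma)$ are again fronts without inflection points, so that Fact~\ref{fact:evolute of one singlarity down} can be iterated. This is exactly the content of the last displayed Fact (the $n$-th evolute $\left(Ev^{n}(\gamma),M^{n}(\nu)\right)$ is a Legendre immersion with curvature $(\ell(t),\beta_{n}(t))$, and $\ell$ is unchanged, so no new inflection points are created). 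Once that iteration is justified, the proof is immediate: it is a direct corollary of Theorem~\ref{coro:properties of (n,n+1)cusps}, stated in contrapositive form, and I expect the write-up to be only a few lines long.
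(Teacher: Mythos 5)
Your proof is correct and follows essentially the same route as the paper: the paper also argues by contradiction, but instead of citing Theorem~\ref{coro:properties of (n,n+1)cusps} it unpacks one level, using Proposition~\ref{prop:coordinate-condition01} to deduce $\gamma^{(n)}(0)\neq\bold{0}$ from the $\mathcal{A}$-equivalence and Fact~\ref{fact:evolute of one singlarity down} to deduce $\gamma''(0)=\cdots=\gamma^{(n)}(0)=\bold{0}$ from the singularity of the evolutes --- the same clash you obtain via the regularity of $Ev^{n-1}(\gamma)$. Your additional care about the well-definedness of the iterated evolutes is reasonable but already covered by the standing hypothesis that $\gamma$ is a front without inflection points.
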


\begin{proof}
Suppose that $\gamma$ is $\mathcal{A}$-equivalent to $Cusp_{(n,n+1)}$ at $t=0$. By Proposition \ref{prop:coordinate-condition01}, $\gamma$ satisfies the conditions $(\ref{align:C1-diffeo-classification01})$ and $(\ref{align:C1-diffeo-classification02})$. 
On the other hand, since $t=0$ is a singular point of $Ev(\gamma)$, $\cdots$, $Ev^{n-1} (\gamma)$, we see that the following by Fact \ref{fact:evolute of one singlarity down}.
\begin{align*}
\frac{d^2 \gamma}{dt^2}(0) = \cdots  = \frac{d^{n} \gamma}{dt^{n}}(0) = \bold{0}.
\end{align*}
This is a contradiction. Therefore we have the assertion.
\end{proof}

\subsection{\bf $C^{1}$ diffeomorphism forms of $(n, n+1)$ cusps}
The purpose of this step is to prove the following proposition.

\begin{proposition}\label{thm:C1-diffeo-cusp-normal-form}
Let $\gamma : I  \rightarrow \mathbb{R}^2$ be a $C^{\infty}$ curve.
Then $\gamma$ satisfies $(\ref{align:C1-diffeo-classification01})$ and $(\ref{align:C1-diffeo-classification02})$ if and only if $\gamma$ is $\mathcal{A}$-equivalent to $C_{(n, n+1)}(s) = (s^n, s^{n+1} Y_{0}(s))$ at $s=0$, where $Y_{0}$ is a $C^{\infty}$ function and $Y_{0}(0) \neq 0$.
\end{proposition}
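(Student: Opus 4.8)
The plan is to prove Proposition \ref{thm:C1-diffeo-cusp-normal-form} in the standard direction: produce the normal form $C_{(n,n+1)}(s) = (s^n, s^{n+1} Y_0(s))$ from the differential conditions, since the converse (that the normal form satisfies $(\ref{align:C1-diffeo-classification01})$ and $(\ref{align:C1-diffeo-classification02})$) is an immediate computation using Proposition \ref{prop:coordinate-condition01}. So assume $\gamma$ satisfies $\gamma'(0)=\cdots=\gamma^{(n-1)}(0)=\bold{0}$ and $\det(\gamma^{(n)}(0),\gamma^{(n+1)}(0))\neq 0$. First I would apply a linear change of coordinates $\Phi_0$ on the target so that $\gamma^{(n)}(0)$ points in the $x$-direction and $\gamma^{(n+1)}(0)$ has nonzero $y$-component; by the vanishing of the lower derivatives and Taylor's theorem with $C^\infty$ remainder, $\gamma(t) = (t^n a(t),\, t^{n+1} b(t))$ with $a(0)\neq 0$ and $b(0)\neq 0$ — here I use the elementary fact that a $C^\infty$ function vanishing to order $k$ at $0$ can be written as $t^k$ times a $C^\infty$ function (Hadamard's lemma / Taylor division).

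Next I would straighten the first component. Set $s = s(t) := t\, a(t)^{1/n}$; since $a(0)\neq 0$ we may choose a smooth branch of $a^{1/n}$ near $0$, and $s'(0) = a(0)^{1/n}\neq 0$, so $t\mapsto s$ is a $C^\infty$ local diffeomorphism of the source. In the new parameter the first component becomes exactly $s^n$. The second component is then $\big(s/a^{1/n}\big)^{n+1} b = s^{n+1}\cdot\big(a^{-(n+1)/n} b\big)\circ t(s)$, which is $s^{n+1}$ times a $C^\infty$ function whose value at $s=0$ is $a(0)^{-(n+1)/n} b(0)\neq 0$. Calling that function $Y_0(s)$, we have arrived at $\gamma \sim_{\mathcal A} (s^n, s^{n+1} Y_0(s))$ with $Y_0(0)\neq 0$, as required.

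For the converse direction I would simply note that $C_{(n,n+1)}(s)$ differentiated $n$ times at $0$ gives $(n!\,, 0)$ up to the obvious factor, and differentiated $n+1$ times gives a vector whose second coordinate is $(n+1)!\,Y_0(0)\neq 0$, while all derivatives of order $1,\dots,n-1$ vanish; hence $(\ref{align:C1-diffeo-classification01})$ and $(\ref{align:C1-diffeo-classification02})$ hold for $C_{(n,n+1)}$, and by Proposition \ref{prop:coordinate-condition01} they hold for anything $\mathcal A$-equivalent to it. The main obstacle — though it is more of a bookkeeping point than a genuine difficulty — is making sure every reduction uses only $C^\infty$ (indeed real-analytic-friendly) operations and that the diffeomorphisms are genuinely local diffeomorphisms at $0$: the choice of the smooth $n$-th root of $a$ needs $a(0)\neq 0$ (guaranteed), and one should keep $Y_0$'s non-vanishing at $0$ visibly tracked through the substitution. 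I would also remark that the target change $\Phi_0$ used at the start is linear, hence certainly a $C^\infty$ diffeomorphism, so the whole equivalence is in fact realized by a $C^\infty$ (even polynomial-plus-smooth) pair $(\Psi,\psi)$, consistent with the statement being phrased with $\mathcal A$-equivalence.
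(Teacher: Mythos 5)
Your proof is correct and follows essentially the same route as the paper: Hadamard/division to factor out $t^n$, the reparametrization $s=t\,x_1(t)^{1/n}$ to normalize the first component, and an affine target change to isolate the factor $s^{n+1}Y_0(s)$. The only difference is ordering — you perform the linear target normalization (sending $\gamma^{(n)}(0)$, $\gamma^{(n+1)}(0)$ to the coordinate axes) at the start, whereas the paper applies a shear $(u,v)\mapsto(u,v-uY_1(0))$ at the end; your ordering has the small side benefit of forcing $a(0)>0$, so the real $n$-th root exists even for $n$ even, a point the paper handles by the separate case $x_1(t)<0$.
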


To prove this proposition, we need the following result (\cite[Proposition A.1]{SUY1}).

\begin{fact}$($The division lemma$)$\label{thm:innsinohodai}
Let $f : U \rightarrow \mathbb{R}$ be a $C^{\infty}$ function defined on a convex open neighborhood $U$ of the origin of $\mathbb{R}^2$. If there exists a non-negative integer $k$ such that 
\begin{equation*}
f(u, 0) = \dfrac{\partial f}{\partial v}(u, 0) = \dfrac{\partial^2 f}{\partial v^2}(u, 0) = \cdots = \dfrac{\partial^k f}{\partial v^k}(u, 0) = 0, 
\end{equation*}
then there exists a  $C^{\infty}$ function $g : U \rightarrow \mathbb{R}$ such that 
\begin{equation*}
f( u, v ) = v^{k + 1} g(u, v), \quad (u,v) \in U. 
\end{equation*}
\end{fact}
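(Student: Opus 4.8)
The plan is to prove the factorization by Taylor's theorem with integral remainder applied in the $v$-variable alone, treating $u$ as a parameter. For a fixed $u$ with $(u,v)\in U$, set $h(v)=f(u,v)$; since $f$ is $C^{\infty}$, $h$ is $C^{\infty}$ in $v$, and Taylor's theorem with integral remainder to order $k$ gives
\[
h(v)=\sum_{j=0}^{k}\frac{h^{(j)}(0)}{j!}\,v^{j}+\frac{1}{k!}\int_{0}^{v}(v-s)^{k}\,h^{(k+1)}(s)\,ds .
\]
The hypotheses $\frac{\partial^{j}f}{\partial v^{j}}(u,0)=0$ for $0\le j\le k$ say precisely that $h^{(j)}(0)=0$ for $0\le j\le k$, so the entire polynomial part vanishes and only the remainder survives.

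The next step is to rewrite the remainder so as to expose the factor $v^{k+1}$. Substituting $s=tv$ (so $ds=v\,dt$) converts the remainder into
\[
\frac{1}{k!}\int_{0}^{v}(v-s)^{k}\,h^{(k+1)}(s)\,ds=\frac{v^{k+1}}{k!}\int_{0}^{1}(1-t)^{k}\,\frac{\partial^{k+1}f}{\partial v^{k+1}}(u,tv)\,dt .
\]
Hence $f(u,v)=v^{k+1}g(u,v)$ with
\[
g(u,v)=\frac{1}{k!}\int_{0}^{1}(1-t)^{k}\,\frac{\partial^{k+1}f}{\partial v^{k+1}}(u,tv)\,dt ,
\]
which already yields the desired factorization wherever the integral is defined; equivalently $g(u,v)=f(u,v)/v^{k+1}$ for $v\neq0$, with the removable value $g(u,0)=\frac{1}{(k+1)!}\frac{\partial^{k+1}f}{\partial v^{k+1}}(u,0)$ on $\{v=0\}$.

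The real content, and the step I expect to be the main obstacle, is verifying that $g$ is genuinely $C^{\infty}$ on all of $U$, the only difficulty being at the zero locus $\{v=0\}$, where the quotient $f/v^{k+1}$ has the indeterminate form $0/0$. Away from $\{v=0\}$ smoothness is immediate, since there $g=f/v^{k+1}$ is a quotient of $C^{\infty}$ functions with nonvanishing denominator. Near a point $(u_{0},0)\in U$ I would instead use the integral representation: choose a small coordinate box $B\subset U$ about $(u_{0},0)$ (possible since $U$ is open), so that for every $(u,v)\in B$ the vertical segment $\{(u,tv):t\in[0,1]\}$ lies inside $U$ and the integrand $(u,v,t)\mapsto(1-t)^{k}\frac{\partial^{k+1}f}{\partial v^{k+1}}(u,tv)$ is $C^{\infty}$ on $B\times[0,1]$. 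Because $[0,1]$ is compact, the Leibniz rule for differentiation under the integral sign applies to every order, so $g\in C^{\infty}(B)$, and its value at $v=0$ agrees with the removable one above. Since these two cases cover $U$ — the convexity of $U$ being what guarantees, after this local refinement, that the relevant vertical segments stay in the domain — we conclude $g\in C^{\infty}(U)$ and $f(u,v)=v^{k+1}g(u,v)$ on $U$, as required.
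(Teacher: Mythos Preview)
The paper does not prove this statement; it is recorded as a Fact with a reference to \cite[Proposition A.1]{SUY1} and is used as a black box thereafter. Your argument via Taylor's formula with integral remainder in the $v$-variable is the standard proof of this division lemma and is correct.

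One small expository point: your opening derivation implicitly assumes that for every $(u,v)\in U$ the whole vertical segment $\{(u,tv):t\in[0,1]\}$ lies in $U$; convexity of $U$ about the origin does \emph{not} guarantee this (the projection $(u,0)$ need not belong to $U$). You effectively repair this yourself in the second half by defining $g=f/v^{k+1}$ for $v\neq0$ and invoking the integral formula only on a small box $B$ about a point $(u_0,0)\in U$; on such a box the vertical segments lie in $B\subset U$ because $B$ is a product of intervals whose $v$-interval contains $0$, so convexity of $U$ is not actually what does the work there. This is a cosmetic inaccuracy and does not affect the validity of your argument.
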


\textit{Proof of Proposition \ref{thm:C1-diffeo-cusp-normal-form}}. If $\gamma$ is $\mathcal{A}$-equivalent to $C_{(n, n+1)}(s) = (s^n, s^{n+1} Y_{0}(s))$ at $s=0$, then $C_{(n, n+1)}$ satisfies $(\ref{align:C1-diffeo-classification01})$ and $(\ref{align:C1-diffeo-classification02})$. By Proposition \ref{prop:coordinate-condition01}, $\gamma$ also satisfies $(\ref{align:C1-diffeo-classification01})$ and $(\ref{align:C1-diffeo-classification02})$. 

Assume that $\gamma$ satisfies $(\ref{align:C1-diffeo-classification01})$ and $(\ref{align:C1-diffeo-classification02})$. We put $\gamma(t) = (x(t), y(t))$. By the division lemma, there exist $C^{\infty}$ functions $x_{1}$ and $y_{1}$ such that $x(t) = t^n x_{1}(t)$, $y(t) = t^n y_{1}(t)$. Since $\gamma^{(n)}(0) \neq 0$, we may assume that $x_{1}(0) \neq 0$. In the following, we assume that $x_{1}(t) > 0$ in an open interval around $t = 0$. Define a function $s : \mathbb{R} \rightarrow \mathbb{R}$ by  $s(t) := t \sqrt[n]{x_{1}(t)}$, then $s$  is a $C^{\infty}$ local diffeomorphism on an open interval around $t =0$ (If $x_{1}(t) < 0$, we define $s(t) := t \sqrt[n]{-x_{1}(t)}$). Thus $\gamma$ can be written as $\tilde{\gamma}(s) = (s^n, s^n Y_{1}(s) )$ using $s$, where $Y_{1}(s) := \dfrac{y_{1}(t(s))}{x_{1}(t(s))}$. Define a $C^{\infty}$ function $\tilde{Y}(s) = Y_{1}(s) - Y_{1}(0).$ By using the division lemma, there exists a $C^{\infty}$ function $Y_{0}$ such that $\tilde{Y}(s) = s Y_{0}(s)$ and hence we get $Y_{1}(s) = Y_{1}(0) + s Y_{0}(s)$. Since  $\dfrac{dt}{ds} \neq 0$ at $s=0$, $\det{(\gamma^{(n)}(0), \gamma^{(n+1)}(0))} \neq 0$ and 

\begin{align*}
Y'_{1}(s) = \dfrac{ y'_{1}(t(s)) x_{1}(t(s)) - x'_{1}(t(s)) y_{1}(t(s)) }{ x_{1}(t(s))^2 } \dfrac{dt(s)}{ds},  
\end{align*}
we have $Y'_{1}(0) = Y_{0}(0) \neq 0$. Therefore we obtain 
\begin{align*}
\tilde{\gamma}(s) = ( s^n , s^n( Y_{1}(0) + sY_{0}(s) ) ).
\end{align*}
Define a $C^{\infty}$ diffeomorphism $f : \mathbb{R}^2 \rightarrow \mathbb{R}^2$ by 
\begin{align*}
f(u,v) = ( u , v - uY_{1}(0) ).
\end{align*}
Then $f \circ \tilde{\gamma}(s) = (s^n, s^{n+1} Y_{0}(s) )$. This concludes the proof. \qed

\subsection{\bf Proof of Theorem \ref{thm:C1-diffeo-classification}}
If $\gamma = Cusp_{(n,n+1)}$, then it is clear that the conditions $(\ref{align:C1-diffeo-classification01})$ and $(\ref{align:C1-diffeo-classification02})$ hold. 
Hence, by Proposition \ref{prop:coordinate-condition01}, if $\gamma$ is $C^{1}$-equivalent to $Cusp_{(n,n+1)}$ at $t=0$, then the conditions $(\ref{align:C1-diffeo-classification01})$ and $(\ref{align:C1-diffeo-classification02})$ hold.

Conversely, assume that $\gamma$ satisfies$(\ref{align:C1-diffeo-classification01})$ and $(\ref{align:C1-diffeo-classification02})$. By Proposition \ref{thm:C1-diffeo-cusp-normal-form}, $\gamma$ is $\mathcal{A}$-equivalent to $C_{(n, n+1)}(s) = (s^n, s^{n+1} Y_{0}(s))$ at $s=0$, where $Y_{0}$ is a $C^{\infty}$ function and $Y_{0}(0) \neq 0$. Define a map $\phi : \mathbb{R}^2 \rightarrow \mathbb{R}^2$ by $\phi(x, y) := (x, z) $, where $z$ is the uniquely-determined $C^1$ function of $y$ satisfying $\displaystyle y = z Y_0 (z^{\frac{1}{n+1}})$. Then we see that

\begin{equation*}
\dfrac{dy}{dz} = \displaystyle Y_{0}(z^{\frac{1}{n+1}}) + z \frac{dY_{0}}{ds}(z^{\frac{1}{n+1}}) \frac{1}{n+1} z^{\frac{1}{n+1}-1} = \displaystyle Y_{0}(z^{\frac{1}{n+1}}) + \frac{1}{n+1} z^{\frac{1}{n+1}} \frac{dY_{0}}{ds}(z^{\frac{1}{n+1}}).
\end{equation*}
By $\dfrac{dy(0)}{dz}= Y_{0}(0) \neq 0$, $y$ is a $C^{1}$ local diffeomorphism on an open interval around $z=0$. Thus $\phi$ is a $C^{1}$ local diffeomorphism on an open neighborhood of the origin of $\mathbb{R}^2$. In particular, note that the only $z$ corresponding to $y = s^{n+1} Y_{0}(s) $ is $z = s^{n+1}$. Therefore we obtain $\phi(s^n, s^{n+1} Y_{0}(s)) = (s^n, s^{n+1})$ and have the assertion. \qed　

\section{\bf Criterion for $(4, 5)$-cusp}\label{sec:criterion for (4, 5)cusp}

In this section, we give a necessary and sufficient condition for a plane curve to be $\mathcal{A}$-$equivalent$ to $Cusp_{(4,5)}$ at the origin. Namely, we prove the following. 

\begin{theorem}\label{thm:criterion for (4, 5)cusp}
Let $\gamma : I \rightarrow \mathbb{R}^2$ be a $C^{\infty}$ curve. Then $\gamma$ is $\mathcal{A}$-equivalent to $Cusp_{(4, 5)}(t) =(t^4, t^5)$ at $t=0$ if and only if $\gamma'(0) = \gamma''(0) = \gamma'''(0) = \bold{0}$, $A \neq 0$ and 

\begin{align}
-77B^2 + 105AD + 60AC = 0,
\end{align}\label{eqn:(4,5)cusp criterion condition}
where $A = \det{(\gamma^{(5)}(0) , \gamma^{(4)}(0))}$, $B = \det{(\gamma^{(6)}(0), \gamma^{(4)}(0))}$, $C = \det{(\gamma^{(7)}(0), \gamma^{(4)}(0))}$ and $D = \det{(\gamma^{(6)}(0), \gamma^{(5)}(0))}$. 
\end{theorem}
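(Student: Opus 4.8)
The plan is to reduce the $\mathcal{A}$-classification problem to a normal-form computation, following the template of Proposition~\ref{thm:C1-diffeo-cusp-normal-form} but now keeping track of the precise coefficients up to order $7$. First I would observe that the stated conditions $\gamma'(0)=\gamma''(0)=\gamma'''(0)=\bold 0$ and $A=\det(\gamma^{(5)}(0),\gamma^{(4)}(0))\neq 0$ are exactly conditions $(\ref{align:C1-diffeo-classification01})$ and $(\ref{align:C1-diffeo-classification02})$ for $n=4$, so by Proposition~\ref{thm:C1-diffeo-cusp-normal-form} the curve is $\mathcal A$-equivalent to $C_{(4,5)}(s)=(s^4,s^5Y_0(s))$ with $Y_0(0)\neq 0$. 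Rescaling $s$ and the target, I may normalize further so that $\gamma$ is $\mathcal A$-equivalent to $(s^4,\,s^5+a\,s^6+b\,s^7+O(s^8))$ for suitable real $a,b$. The key point is then: $(s^4,s^5+as^6+bs^7+\cdots)$ is $\mathcal A$-equivalent to $(t^4,t^5)$ if and only if the $a$-term can be killed by a coordinate change and, once it is, the residual $s^7$-coefficient also vanishes; by Fact~\ref{fact:simple germs}(4) the only obstruction to reaching $(t^4,t^5)$ inside this jet class is precisely whether that residual coefficient is zero (the nonzero case giving $W_{12}$'s other branch $(t^4,t^5+t^7)$, which is not $\mathcal A$-equivalent to $(t^4,t^5)$).

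Next I would carry out the normal-form reduction explicitly. The source reparametrization $s\mapsto s+c_2s^2+c_3s^3+\cdots$ and a target diffeomorphism of the form $(u,v)\mapsto(u+\alpha v+\cdots,\,v+\beta u^2+\gamma uv+\cdots)$ act on the $2$-jet-truncated curve; I would show that $c_2$ (together with a target shear) can be chosen to remove the $s^6$-term, i.e.\ to put the curve in the form $(s^4,\,s^5+\tilde b\,s^7+O(s^8))$, and then compute $\tilde b$ as an explicit function of $a$ and $b$. The classification then says $\gamma\sim_{\mathcal A}(t^4,t^5)$ iff $\tilde b=0$. It remains to translate $\tilde b=0$ back into an equation among the derivatives $\gamma^{(4)}(0),\dots,\gamma^{(7)}(0)$. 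For this I would use that the quantities $A,B,C,D$ are, up to universal nonzero factors, $\mathcal A$-invariant combinations: $A$ corresponds to the leading area form, while $B,C,D$ measure the $s^6$-, $s^7$-, and mixed coefficients, and the Faà di Bruno bookkeeping in $(\ref{align:Faa di chain rule parameter})$ shows how each transforms under the allowed changes. Matching $\tilde b$ with the appropriate ratio built from $A,B,C,D$ should yield the homogeneous relation $-77B^2+105AD+60AC=0$.

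The main obstacle I expect is the combinatorial control of the coordinate changes: I must verify that the group of admissible $\mathcal A$-transformations preserving $(\ref{align:C1-diffeo-classification01})$--$(\ref{align:C1-diffeo-classification02})$ acts on the relevant jet space so that (i) the $s^6$-coefficient can always be normalized away but (ii) the combination $-77B^2+105AD+60AC$ is, up to a nonzero factor, the unique remaining invariant governing the $s^7$-level — equivalently that no further transformation can alter it. Concretely this means computing the $7$-jet of $\Psi\circ\gamma\circ\psi^{-1}$ for general diffeomorphism germs $\Psi,\psi$, extracting the transformation law of $(A,B,C,D)$, and checking that the vanishing locus of that particular quadratic form is invariant. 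The numerical coefficients $77,105,60$ come out of this jet computation — getting them right (and confirming that $A\neq 0$ is exactly what makes the form $\tilde b$ well-defined) is the delicate part; once that identity is established, the ``only if'' direction is immediate from Fact~\ref{fact:simple germs}(4) and Proposition~\ref{prop:coordinate-condition01}, and the ``if'' direction follows by exhibiting the explicit normalizing diffeomorphisms.
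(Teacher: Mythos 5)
Your reduction to the normal form $(s^4,\,s^5+as^6+bs^7+O(s^8))$, the elimination of the $s^6$-term by a reparametrization plus a shear, and the identification of the residual $s^7$-coefficient with the quantity $-77B^2+105AD+60AC$ (up to a nonzero universal constant) all match what the paper does in Lemma~\ref{lem:normal form of smooth (n,n+1)cusp}, Corollary~\ref{lem:normal form of smooth (n,n+1)cusp part2} and Lemma~\ref{lem:coeffient of 7}, and the invariance computation you describe is Proposition~\ref{prop:coordinate-condition10}. So the ``only if'' direction and the order-$6$/order-$7$ normalization are sound.

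The genuine gap is in the ``if'' direction, at the step where you pass from ``$\tilde b=0$, i.e.\ the curve is $(s^4,s^5+O(s^8))$'' to ``the curve is $\mathcal A$-equivalent to $(t^4,t^5)$.'' You justify this by appealing to Fact~\ref{fact:simple germs}(4), but that is a classification of \emph{complex-analytic} simple germs up to $C^{\omega}$-equivalence; it does not by itself tell you that every real $C^{\infty}$ germ whose $7$-jet is $(t^4,t^5)$ lies in the $\mathcal A$-class of $(t^4,t^5)$, and proving exactly that statement is the substantive content of the theorem. Concretely, two obstructions remain after the $7$-jet is normalized, because the gaps of the semigroup $\langle 4,5\rangle$ above $5$ are $6$, $7$ \emph{and} $11$: (i) the order-$11$ term cannot be absorbed by a target diffeomorphism in $(X,Y)=(s^4,s^5)$ and must be removed by a further source reparametrization (the paper's Lemma~\ref{lemma:coeffient of 11}, using $\phi(s)=s-\tfrac{\tilde b_{11}}{5}s^7-\tfrac{\tilde a_{11}}{4}s^8$); and (ii) one must show that an arbitrary $C^{\infty}$ tail vanishing to order $7$ with no order-$11$ term can actually be written as $\psi(s^4,s^5)$ for a $C^{\infty}$ map $\psi$ and hence absorbed into a target diffeomorphism $\Psi=\mathrm{id}+\psi$ --- this requires the iterated Whitney/division-lemma decomposition $M(s)=M_1(s^4)+sM_2(s^4)+s^2M_3(s^4)+s^3M_4(s^4)$ of Proposition~\ref{prop:(4,5)cusp form}, not just a jet-level argument. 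Your proposal addresses neither (i) nor (ii), so as written it establishes only that the $7$-jet can be normalized, not $\mathcal A$-equivalence. Supplying these two steps (or an equivalent finite-determinacy argument valid in the real $C^{\infty}$ category) is what is needed to close the proof.
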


Before we prove this theorem, we check the following.

\begin{proposition}\label{prop:coordinate-condition10}
Under the conditions $\gamma'(0) = \gamma''(0) = \gamma'''(0) = \bold{0}$, the condition $(\ref{eqn:(4,5)cusp criterion condition})$ does not depend on the choices of coordinates on the source nor on the target. 
\end{proposition}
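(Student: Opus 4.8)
The plan is to reduce to the two kinds of coordinate changes separately, exactly as was done for Proposition~\ref{prop:coordinate-condition01}, and to track how the four determinants $A,B,C,D$ transform under each. Throughout, we work under the standing hypothesis $\gamma'(0)=\gamma''(0)=\gamma'''(0)=\bm{0}$, which by Lemma~\ref{lemma:1} is itself coordinate-independent, so after any admissible change the new curve $\hat\gamma$ (or $\tilde\gamma$) again kills its first three derivatives at the origin.

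First I would treat a $C^\infty$ local diffeomorphism $\Phi=(\Phi_1,\Phi_2)$ of the target. As in the proof of Lemma~\ref{lemma:2}, write $\hat\gamma=\Phi\circ\gamma$, $J(t)$ for the Jacobian matrix of $\Phi$ along $\gamma$, and use formula $(\ref{eqn:1})$ together with $\gamma'(0)=\gamma''(0)=\gamma'''(0)=\bm 0$ to see that the derivatives $J^{(j)}(0)$ that survive are only those hitting $\gamma^{(k)}(0)$ with $k\ge 4$; moreover $J'(0)=J_x(0)x'(0)+J_y(0)y'(0)=\bm 0$ because $\gamma'(0)=\bm 0$, and similarly $J''(0)=\bm 0$, $J'''(0)=\bm 0$ since $x^{(i)}(0)=y^{(i)}(0)=0$ for $i\le 3$. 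Consequently $\hat\gamma^{(4)}(0)=J(0)\gamma^{(4)}(0)$, $\hat\gamma^{(5)}(0)=J(0)\gamma^{(5)}(0)$, $\hat\gamma^{(6)}(0)=J(0)\gamma^{(6)}(0)$, and $\hat\gamma^{(7)}(0)=J(0)\gamma^{(7)}(0)+\binom{6}{3}J^{(4)}(0)\gamma^{(4)}(0)$, i.e. the $7$th derivative picks up a multiple of $\gamma^{(4)}(0)$. For any two vectors $v,w$ one has $\det(J(0)v,J(0)w)=\det J(0)\cdot\det(v,w)$ and $\det(J(0)v,\gamma^{(4)}(0))$ is not directly comparable — but since the extra term in $\hat\gamma^{(7)}(0)$ is parallel to $\gamma^{(4)}(0)$, it drops out of $\hat C=\det(\hat\gamma^{(7)}(0),\hat\gamma^{(4)}(0))=\det J(0)\cdot C$. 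Thus each of $A,B,C,D$ is multiplied by the single nonzero scalar $\det J(0)$, so $A\neq 0$ is preserved and the left side of $(\ref{eqn:(4,5)cusp criterion condition})$ scales homogeneously by $\det J(0)^2$; hence the vanishing condition is preserved.

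Next I would treat a $C^\infty$ local diffeomorphism $\psi$ of the source, writing $\tilde\gamma=\gamma\circ\psi$ and applying Fa\`a di Bruno (Fact~\ref{fact:chain-rule}) as in $(\ref{align:Faa di chain rule parameter})$. Under $\gamma^{(i)}(0)=\bm 0$ for $i=1,2,3$, the $k$th derivative $\tilde\gamma^{(k)}(0)$ is, modulo the span of $\{\gamma^{(4)}(0),\gamma^{(5)}(0),\gamma^{(6)}(0)\}$, equal to $(\psi'(0))^{k}\gamma^{(k)}(0)$: concretely $\tilde\gamma^{(4)}(0)=a^4\gamma^{(4)}$, $\tilde\gamma^{(5)}(0)=a^5\gamma^{(5)}+(\ast)\gamma^{(4)}$, $\tilde\gamma^{(6)}(0)=a^6\gamma^{(6)}+(\ast)\gamma^{(5)}+(\ast)\gamma^{(4)}$, $\tilde\gamma^{(7)}(0)=a^7\gamma^{(7)}+(\ast)\gamma^{(6)}+(\ast)\gamma^{(5)}+(\ast)\gamma^{(4)}$, where $a=\psi'(0)\neq0$ and the $(\ast)$ are universal polynomials in the higher derivatives of $\psi$ at $0$. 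Expanding the four determinants by multilinearity and alternation, every cross term is a determinant of two vectors drawn from $\{\gamma^{(4)},\gamma^{(5)},\gamma^{(6)},\gamma^{(7)}\}$, and I would need to check that the coefficients conspire so that $\tilde A=a^9 A$, $\tilde B=a^{10}B+\mu a^{?}A$, $\tilde C=a^{11}C+\cdots$, $\tilde D=a^{11}D+\cdots$ in such a way that the combination $-77B^2+105AD+60AC$ rescales by a common power of $a$ up to a multiple of $A$ times terms that vanish when the original expression does — this is the computational heart of the argument. The cleanest route is to note that when $A=\det(\gamma^{(5)},\gamma^{(4)})\neq 0$ the pair $\{\gamma^{(4)},\gamma^{(5)}\}$ is a basis of $\R^2$, express $\gamma^{(6)}$ and $\gamma^{(7)}$ in that basis, rewrite $(\ref{eqn:(4,5)cusp criterion condition})$ purely in terms of those coordinates, and observe that it is exactly the algebraic condition that (after the $C^1$ normalization of Section~\ref{sec:Classification of $(n, n+1)$ cusp in $C^{1}$ class diffeomorphism}) pins down the $\mathcal{A}$-type $Cusp_{(4,5)}$ rather than $W_{12}$ — which is coordinate-free by construction.

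The main obstacle I anticipate is precisely the source-reparametrization bookkeeping: unlike the $(n,n+1)$ case, here four determinants of orders $4$ through $7$ mix under $\psi$, and one must verify that the quadratic expression $-77B^2+105AD+60AC$ — not merely each of $A,B,C,D$ — transforms as a weight-homogeneous quantity modulo the ideal generated by $A$ and the lower-weight relations, so that its vanishing is invariant. I would organize this by assigning the weight $k$ to $\gamma^{(k)}(0)$ and $1$ to each derivative of $\psi$, checking that $-77B^2+105AD+60AC$ is isobaric of weight $2\cdot(5+4)=18$ under the target scaling and consistently of weight $11+9=20$... — the arithmetic of matching $105$, $60$ and $-77$ is where the real work lies, and I would present it as a short lemma computing $\tilde B$, $\tilde C$, $\tilde D$ explicitly in terms of $a$, $\psi''(0)$, $\psi'''(0)$ and the four original determinants, then substitute and simplify.
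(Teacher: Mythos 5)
Your treatment of the target coordinate change matches the paper's and reaches the right conclusion, but note one slip: in the expansion of $\hat\gamma^{(7)}(0)$ the term paired with $\gamma^{(4)}(0)$ is $\binom{6}{3}J^{(3)}(0)\gamma^{(4)}(0)$, not $\binom{6}{3}J^{(4)}(0)\gamma^{(4)}(0)$, and it vanishes outright because $J'''(0)=\bm{0}$ (which you had already established); no $J^{(4)}(0)$ term occurs in that formula. Your fallback justification --- that the extra term is ``parallel to $\gamma^{(4)}(0)$'' and hence drops out of $\hat C$ --- would not be valid even if such a term were present: $J^{(4)}(0)\gamma^{(4)}(0)$ is a matrix applied to a vector and need not be proportional to $\gamma^{(4)}(0)$, let alone to $\hat\gamma^{(4)}(0)=J(0)\gamma^{(4)}(0)$. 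So the conclusion $\hat A=\det J(0)\,A,\dots,\hat C=\det J(0)\,C$ is correct, but for the first reason you gave, not the one you fell back on.

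The genuine gap is in the source reparametrization. The entire content of the proposition on that side is that, although $B$, $C$, $D$ individually acquire cross terms under $t=\psi(s)$, the specific combination $-77B^2+105AD+60AC$ transforms by the pure factor $(\psi'(0))^{20}$ --- and you explicitly defer exactly this verification (``the arithmetic of matching $105$, $60$ and $-77$ is where the real work lies''), so the proof is not complete; the muddled weight count ($2\cdot(5+4)=18$ versus $20$) in your sketch underscores that the computation was not carried out. The paper does it by writing $\gamma(t)=(t^4z(t),t^4w(t))$ via the division lemma, expressing $A,B,C,D$ in terms of $z,w$ and their derivatives at $0$, and checking that the combination depends only on the brackets $z(0)w^{(i)}(0)-w(0)z^{(i)}(0)$ and $z'(0)w''(0)-w'(0)z''(0)$, whence the same computation applied to $\gamma\circ\psi$ produces the factor $(\psi'(0))^{20}$. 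Moreover, your proposed ``cleanest route'' --- rewriting the condition in the basis $\{\gamma^{(4)}(0),\gamma^{(5)}(0)\}$ and observing that it ``pins down the $\mathcal{A}$-type $Cusp_{(4,5)}$, which is coordinate-free by construction'' --- is circular: that this condition characterizes the $\mathcal{A}$-type is Theorem \ref{thm:criterion for (4, 5)cusp}, which is proved later and relies on the present proposition. The invariance must be established by direct computation (or some argument independent of the criterion) before the condition may be identified with an $\mathcal{A}$-invariant.
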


\begin{proof}
First we show that the condition $(\ref{eqn:(4,5)cusp criterion condition})$ does not depend on the choice of a $C^{\infty}$ local diffeomorphism $\Phi : (\mathbb{R}^2,0)$ $\rightarrow$ $(\mathbb{R}^2,0)$. We put $\Phi = (\Phi_{1}, \Phi_{2})$ and $\gamma(t) = (x(t), y(t))$. Moreover, we set $\hat{\gamma}(t) = \Phi \circ \gamma(t) = \Phi( x(t), y(t) )$. By (\ref{eqn:1}), we have the followings. 
\begin{align*}
\hat{\gamma}^{(4)}(t) = \displaystyle \sum_{k = 0}^{3} {}_{3} C _{k} J(t)^{(3-k)} \gamma^{(k+1)}(t), \quad \hat{\gamma}^{(5)}(t) = \displaystyle \sum_{k = 0}^{4} {}_{4} C _{k} J(t)^{(4-k)} \gamma^{(k+1)}(t),   \\
\hat{\gamma}^{(6)}(t) = \displaystyle \sum_{k = 0}^{5} {}_{5} C _{k} J(t)^{(5-k)} \gamma^{(k+1)}(t), \quad \hat{\gamma}^{(7)}(t) = \displaystyle \sum_{k = 0}^{6} {}_{6} C _{k} J(t)^{(6-k)} \gamma^{(k+1)}(t). 
\end{align*}
Note that we see that $J'(0) = J''(0) = J'''(0) = \bold{0}$ by $\gamma'(0) = \gamma''(0) = \gamma'''(0) = \bold{0}$. Thus we have the followings.

\begin{align*}
\hat{\gamma}^{(4)}(0) = J(0) \gamma^{(4)}(0), \quad \hat{\gamma}^{(5)}(0) = J(0)\gamma^{(5)}(0),  \\
\hat{\gamma}^{(6)}(0) = J(0)\gamma^{(6)}(0), \quad \hat{\gamma}^{(7)}(0) = J(0)\gamma^{(7)}(0). 
\end{align*}
Setting $\hat{A} = \det{(\hat{\gamma}^{(5)}(0) , \hat{\gamma}^{(4)}(0))}$, $\hat{B} = \det{(\hat{\gamma}^{(6)}(0), \hat{\gamma}^{(4)}(0))}$, $\hat{C} = \det{(\hat{\gamma}^{(7)}(0), \hat{\gamma}^{(4)}(0))}$ and $\hat{D} = \det{(\hat{\gamma}^{(6)}(0), \hat{\gamma}^{(5)}(0))}$, we have 

\begin{align}\label{align:invariant of (4,5)cusp part1 }
-77 \hat{B}^2 + 105 \hat{A} \hat{D} + 60 \hat{A} \hat{C} = (\det{J(0)})^2 (-77B^2 + 105AD + 60AC) = 0.
\end{align}

Next we show that the condition $(\ref{eqn:(4,5)cusp criterion condition})$ does not depend on the choice of a $C^{\infty}$ local diffeomorphism $\psi : (\mathbb{R},0)$ $\rightarrow$ $(\mathbb{R},0)$. We set $\tilde{\gamma}(t) = \gamma(\psi(t))$. By $\gamma'(0) = \gamma''(0) = \gamma'''(0) = \bold{0}$, $\gamma(t)$ can be written as

\begin{align}
\gamma(t) = (t^4 z(t) , t^4 w(t)),
\end{align}
where $z(t)$ and $w(t)$ are $C^{\infty}$ functions. Then we have the followings.

\begin{align*}
A &= -2880 z(0) w'(0) + 2880 w(0) z'(0),  \\
B &= -8640 z(0) w''(0) + 8640 w(0)z''(0),  \\
C &= -20160 z(0) w'''(0) + 20160 w(0) z'''(0),  \\
D &= -43200 z'(0) w''(0) + 43200 w'(0) z''(0).  
\end{align*}
Thus we see that 

\begin{align*}
-77B^2 + 105AD + 60AC &= -5748019200 ( -z(0) w''(0) + w(0)z''(0))^2 \\
&- 13063680000 (z(0) w'(0) - w(0) z'(0)) ( -z'(0) w''(0) + w'(0) z''(0) )  \\
&+ 3483648000 ( z(0) w'(0) - w(0) z'(0) ) ( z(0) w'''(0) - w(0) z'''(0) ). 
\end{align*}
Setting $\tilde{A} = \det{(\tilde{\gamma}^{(5)}(0) , \tilde{\gamma}^{(4)}(0))}$, $\tilde{B} = \det{(\tilde{\gamma}^{(6)}(0), \tilde{\gamma}^{(4)}(0))}$, $\tilde{C} = \det{(\tilde{\gamma}^{(7)}(0), \tilde{\gamma}^{(4)}(0))}$ and $\tilde{D} = \det{(\tilde{\gamma}^{(6)}(0), \tilde{\gamma}^{(5)}(0))}$, we obtain the following by the same calculations.

\begin{align}\label{eqn:parameter change part001}
-77\tilde{B}^2 + 105\tilde{A} \tilde{D} + 60 \tilde{A} \tilde{C} = (\psi'(0))^{20} ( -77B^2 + 105AD + 60AC ) = 0. 
\end{align}
Therefore we have the assertion. 
\end{proof}

\begin{remark}\label{remark:invariant of sign}
By (\ref{align:invariant of (4,5)cusp part1 }) and (\ref{eqn:parameter change part001}), we see that the expression $-77B^2 + 105AD + 60AC$ does not change sign by coordinate transformations on the source nor on the target. 
\end{remark}

\begin{definition}\label{definition:new (4,5) cusipdal}
Under the conditions $\gamma'(0) = \gamma''(0) = \gamma'''(0) = \bold{0}$ and $\gamma^{(4)}(0) \neq \bold{0}$, we define the \it{$(4, 5; \pm 7)$-cuspidal curvature} as follow:
\begin{equation}
\kappa_{q} = \dfrac{-77B^2 + 105AD + 60AC}{\| \gamma^{(4)}(0) \|^5 }, 
\end{equation}
where $A = \det{(\gamma^{(5)}(0) , \gamma^{(4)}(0))}$, $B = \det{(\gamma^{(6)}(0), \gamma^{(4)}(0))}$, $C = \det{(\gamma^{(7)}(0), \gamma^{(4)}(0))}$ and $D = \det{(\gamma^{(6)}(0), \gamma^{(5)}(0))}$.
\end{definition}

\begin{proposition}
The number $\kappa_{q}$ does not depend on the choices of parameter transformations on the source nor of congruent transformations on the target. 
\end{proposition}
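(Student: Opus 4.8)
The plan is to reuse the two weight computations already carried out in the proof of Proposition \ref{prop:coordinate-condition10} and to track how the denominator $\|\gamma^{(4)}(0)\|^5$ transforms, so that the numerator and denominator factors cancel. Recall first that $\kappa_q$ is defined only under $\gamma'(0)=\gamma''(0)=\gamma'''(0)=\bold{0}$ and $\gamma^{(4)}(0)\neq\bold{0}$; these conditions are themselves preserved by parameter changes $\psi$ (since $\tilde\gamma^{(4)}(0)=(\psi'(0))^4\gamma^{(4)}(0)$ with $\psi'(0)\neq 0$) and by target diffeomorphisms $\Phi$ (since $\hat\gamma^{(k)}(0)=J(0)\gamma^{(k)}(0)$ for $k=4,\dots,7$ with $J(0)$ invertible), so the two identities behind $(\ref{align:invariant of (4,5)cusp part1 })$ and $(\ref{eqn:parameter change part001})$ apply. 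They state that $-77B^2+105AD+60AC$ is multiplied by $(\det J(0))^2$ under $\Phi$ and by $(\psi'(0))^{20}$ under $\psi$, as polynomial identities in the derivatives involved; the vanishing of this quantity plays no role.

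First I would treat a parameter transformation $\tilde\gamma(t)=\gamma(\psi(t))$. From $\tilde\gamma^{(4)}(0)=(\psi'(0))^4\gamma^{(4)}(0)$ (Fa\`a di Bruno, equivalently $(\ref{align:Faa di chain rule parameter})$ with $n=4$) we get $\|\tilde\gamma^{(4)}(0)\|^5=|\psi'(0)|^{20}\,\|\gamma^{(4)}(0)\|^5=(\psi'(0))^{20}\,\|\gamma^{(4)}(0)\|^5$, the last equality because $20$ is even. Dividing the transformation law of the numerator by this denominator, the factor $(\psi'(0))^{20}$ cancels and $\tilde\kappa_q=\kappa_q$.

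Next I would treat a congruent transformation of the target, $\Phi(\bx)=R\bx+\bb$ with $R\in O(2)$ and $\bb\in\R^2$. Then the Jacobian matrix $J(t)\equiv R$ is constant, hence $\hat\gamma^{(4)}(0)=R\gamma^{(4)}(0)$; orthogonality of $R$ gives $\|\hat\gamma^{(4)}(0)\|=\|\gamma^{(4)}(0)\|$, and $(\det J(0))^2=(\det R)^2=1$. Hence both numerator and denominator are unchanged, so $\hat\kappa_q=\kappa_q$.

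I do not expect a real obstacle: the substantive computations were done in Proposition \ref{prop:coordinate-condition10}, and what remains is merely matching the weight $5$ of the denominator against the weight $20$ (respectively the squared Jacobian) picked up by the numerator. The one point worth stressing is that this is precisely why the statement restricts to \emph{congruent} target transformations: an isometry preserves $\|\gamma^{(4)}(0)\|$ exactly and contributes only the trivial factor $(\det R)^2=1$ to the numerator, whereas a general linear change of the target would multiply the numerator by $(\det J(0))^2$ without rescaling $\|\gamma^{(4)}(0)\|$ by a matching power of $|\det J(0)|$, so $\kappa_q$ would fail to be an invariant.
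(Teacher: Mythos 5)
Your proof is correct and follows essentially the same route as the paper's (very terse) proof: the paper likewise invokes the transformation laws $\tilde{\gamma}^{(4)}(0)=(\psi'(0))^{4}\gamma^{(4)}(0)$ and the $(\psi'(0))^{20}$, $(\det J(0))^{2}$ factors for the numerator, leaving the cancellation against $\|\gamma^{(4)}(0)\|^{5}$ implicit. You simply make explicit the evenness of the exponent $20$ and the orthogonality argument for congruent target maps, which is a faithful elaboration rather than a different method.
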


\begin{proof}
By (\ref{align:Faa di chain rule parameter}) and (\ref{eqn:parameter change part001}), we see that the number $\kappa_{q}$ is an invariant with respect to parameter transformations. It is clear that the number $\kappa_{q}$ is independent of congruent transformations by (\ref{align:invariant of (4,5)cusp part1 }). 
\end{proof}

The $(4, 5; \pm 7)$-cuspidal curvature may depend on a coordinate transformation on the target. However, this never changes the sign by coordinate transformations.

\subsection{\bf $(4, 5)$-cusps and Whitney's lemma}

The purpose of this step is to prove the following proposition.

\begin{proposition}\label{prop:(4,5)cusp form}
$\alpha(s) = (s^{4}, s^{5}) + M(s)$ is $\mathcal{A}$-equivalent to $Cusp_{(4, 5)}(s) = (s^4, s^5)$ at $s=0$, where $M(s)$ is a $\mathbb{R}^2$-valued $C^{\infty}$ function satisfying  $M(0) = M'(0) \cdots = M^{(7)}(0) = \bold{0}$ and $M^{(11)}(0) = \bold{0}$. 
\end{proposition}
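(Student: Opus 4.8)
The plan is to exhibit explicit source and target diffeomorphisms that absorb the perturbation $M(s)$, using the fact that $M$ vanishes to order $8$ at the origin together with the vanishing of its $11$th derivative. First I would write $M(s)=(M_1(s),M_2(s))$ and, by the division lemma (Fact~\ref{thm:innsinohodai}), factor $M_1(s)=s^8 P(s)$ and $M_2(s)=s^8 Q(s)$ for $C^\infty$ functions $P,Q$. The strategy is then a two-stage reduction: first change the source parameter to kill the perturbation in the first component, then change the target coordinates to kill what remains in the second component. For the source change, set $u = u(s) := s\,\sqrt[4]{1+s^4 P(s)}$, which is a $C^\infty$ local diffeomorphism near $s=0$ because $1+s^4P(s)>0$ there; it satisfies $u(s)^4 = s^4 + s^8 P(s) = s^4 + M_1(s)$, so in the parameter $u$ the curve becomes $(u^4,\ s(u)^5 + s(u)^8 Q(s(u)))$. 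Expanding $s(u) = u(1 + O(u^4))$ one finds $s(u)^5 = u^5 + O(u^9)$ and $s(u)^8 Q(s(u)) = O(u^8)$, so the second component has the form $u^5 + u^8 R(u)$ for some $C^\infty$ function $R$; here I would track carefully that the coefficient of $u^{6}$ and $u^{7}$ in the perturbed second component, as well as the $u^{10}$ term, are controlled — the hypothesis $M^{(11)}(0)=\bold 0$ is exactly what guarantees the absence (or rather removability) of the obstructing $u^{11}$ term that would otherwise survive the normalization.

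Next, for the target change, I would apply the classification philosophy behind Fact~\ref{fact:simple germs}(4): since $(u^4,u^5)$ is the $W_{12}$ normal form, a curve of the shape $(u^4,\ u^5 + u^8 R(u))$ should be reducible to $(u^4,u^5)$ by a target diffeomorphism provided the "first modular coefficient" — the coefficient of $u^7$ in the second component after killing $u^6$ — vanishes. Concretely, one uses a polynomial target diffeomorphism $\Psi(x,y) = (x,\ y + a_1 x y + a_2 x^2 + \cdots)$ and Whitney-type substitutions (writing monomials $u^k$ with $k\ge 8$ in terms of $x=u^4$ and $y=u^5$: e.g. $u^8 = x^2$, $u^9 = xy$, $u^{10} = y^2$, $u^{12} = x^3$, $u^{13} = x^2 y$, and so on, while $u^{11}$ is *not* a monomial in $x,y$) to successively remove each term $u^8, u^9, u^{10}, u^{12}, u^{13},\dots$ from the second component. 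The term $u^{11}$ cannot be removed this way — it is the modulus of $W_{12}$ distinguishing $(t^4,t^5+t^7)$ from $(t^4,t^5)$ — and this is precisely why the hypothesis $M^{(11)}(0)=\bold 0$ was imposed: one must check that after the source change above, the coefficient of $u^{11}$ in the second component is $0$.

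The main obstacle I anticipate is the bookkeeping of exactly which low-order coefficients ($u^6,u^7,u^{10},u^{11}$ in the second component) are produced by the source reparametrization and whether they can all be cleared. The terms $u^6,u^7$ come with factors of $x=u^4$ only after multiplication, so $u^6$ is handled by a term $a x^{?}\cdots$—actually $u^6$ and $u^7$ are genuinely low and must be absent from the start: one checks that since $M$ vanishes to order $8$, the reparametrized second component is $u^5 + (\text{terms of order} \ge 8)$, so no $u^6,u^7$ appear. Then the only dangerous surviving term in the relevant range is $u^{11}$, killed by hypothesis; everything of order $\ge 8$ other than $u^{11}$ lies in the ideal generated by the Whitney monomials and is removed by a finite sequence of target diffeomorphisms, with a standard argument (finite determinacy of $W_{12}$, or a direct convergence/formal-to-$C^\infty$ argument via Borel's lemma and a further division-lemma step as in the proof of Proposition~\ref{thm:C1-diffeo-cusp-normal-form}) handling the infinite tail. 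Assembling the composite of the source diffeomorphism $s\mapsto u$ and the finite product of target diffeomorphisms yields the required $\mathcal A$-equivalence between $\alpha$ and $Cusp_{(4,5)}$.
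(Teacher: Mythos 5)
Your overall strategy (normalize the first component by a source reparametrization, then clear the second component by target changes) is different from the paper's, which uses no source change at all: the paper applies the iterated Whitney decomposition (Lemma \ref{lem:General Whintney lemma}) to write $M(s)=M_1(s^4)+sM_2(s^4)+s^2M_3(s^4)+s^3M_4(s^4)$, observes that the vanishing hypotheses force $M_i(X)=X^2\tilde M_i(X)$ for $i=1,2,3$ and $M_4(X)=X^3\tilde M_4(X)$, hence $M(s)=\psi(s^4,s^5)$ for the single smooth map $\psi(X,Y)=X^2\tilde M_1(X)+XY\tilde M_2(X)+Y^2\tilde M_3(X)+Y^3\tilde M_4(X)$, and concludes with the one target diffeomorphism $\Psi(X,Y)=(X,Y)+\psi(X,Y)$. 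Your route could in principle work, but as written it has a genuine gap exactly where the paper's trick is needed: the step ``successively remove each term $u^8,u^9,u^{10},u^{12},u^{13},\dots$ by a finite sequence of target diffeomorphisms, with finite determinacy of $W_{12}$ or Borel's lemma handling the infinite tail'' is not a proof. The perturbation is a $C^\infty$ remainder, not a polynomial, so term-by-term removal is an infinite process; Borel's lemma only realizes a formal series by some smooth function and does not yield an $\mathcal A$-equivalence; and finite determinacy of $W_{12}$ is neither proved in your argument nor available as a citable fact in the paper --- indeed it is essentially what the whole section is establishing. The repair is precisely to package the entire smooth remainder at once: apply Lemma \ref{lem:General Whintney lemma} (iterated Whitney, Fact \ref{thm:Whitney lemma}) to the residual perturbation $u^8R(u)$ of the second component, use the order-$8$ vanishing together with the vanishing of the $u^{11}$ coefficient to see that every surviving residue class lands in the numerical semigroup $\langle 4,5\rangle$, and absorb it by one target diffeomorphism of the form $(x,y)\mapsto(x,\,y-g(x,y))$.

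Two smaller points. First, your claim that the $u^{11}$ term ``is the modulus of $W_{12}$ distinguishing $(t^4,t^5+t^7)$ from $(t^4,t^5)$'' is not correct: the distinguishing coefficient is that of $t^7$ (this is what $\kappa_q$ detects), while a $t^{11}$ term in either component can always be removed by a reparametrization $s\mapsto s-\tfrac{\tilde b_{11}}{5}s^7-\tfrac{\tilde a_{11}}{4}s^8$ (this is Lemma \ref{lemma:coeffient of 11}); the hypothesis $M^{(11)}(0)=\bold 0$ in the proposition is there only because $11\notin\langle4,5\rangle$ obstructs the direct substitution $u^{11}=x^ay^b$, not because $u^{11}$ is a modulus. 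Since the hypothesis is assumed, this does not invalidate your argument, but it misidentifies the role of the hypothesis. Second, your verification that the source change $u=s\sqrt[4]{1+s^4P(s)}$ preserves the vanishing of the $u^{11}$ coefficient of the second component is correct in outline (exponents shift by multiples of $4$, so only the original $s^{11}$ coefficient contributes), but it should be written out, since it is the one place where the source change could in principle spoil the hypothesis you need later.
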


To prove this proposition, we need the following result (\cite[Theorem 3.1.7]{SUY1}, \cite{Whitney} ).  

\begin{fact}(Whitney's lemma)\label{thm:Whitney lemma}
Let $f : U \subset \mathbb{R}^2 \rightarrow \mathbb{R}$ be a $C^{\infty}$ function defined on an open neighborhood of the origin of $\mathbb{R}^2$. If $f$ satisfies $f(u, v) = f(u, -v)$, then there exists a $C^{\infty}$ function $g$ defined on an open neighborhood of the origin of $\mathbb{R}^2$ such that $f(u, v) = g(u, v^2)$ holds near the origin. 
\end{fact}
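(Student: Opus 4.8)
The plan is to construct $g$ explicitly as the sum of a smooth bulk term carrying the correct Taylor jet in the second variable and a singular-looking correction term that turns out to be smooth precisely because of flatness. First I would exploit the hypothesis $f(u,v)=f(u,-v)$ only through its infinitesimal consequence: differentiating the even symmetry shows that every odd $v$-derivative $\partial_v^{2j+1} f(u,0)$ vanishes identically in $u$. Hence the smooth functions $a_j(u) := \frac{1}{(2j)!}\partial_v^{2j} f(u,0)$ encode the full $v$-jet of $f$ along $\{v=0\}$, and this jet involves only even powers of $v$, i.e. only integer powers of $w=v^2$.

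Next I would realize these coefficients as an honest function. By Borel's theorem (carrying the parameter $u$ along), there is a $C^\infty$ function $\tilde g(u,w)$ near the origin with $\frac{1}{j!}\partial_w^j \tilde g(u,0) = a_j(u)$ for every $j$. Setting $D(u,v) := f(u,v) - \tilde g(u,v^2)$, the function $D$ is $C^\infty$, even in $v$ (both summands are), and by construction its entire $v$-jet at $v=0$ vanishes; that is, $D$ is flat along $\{v=0\}$. If I can show that the square-root substitution of such a flat even function extends smoothly across $w=0$, then $g := \tilde g + E$, with $E$ that extension, will be the desired function, since $g(u,v^2) = \tilde g(u,v^2) + D(u,v) = f(u,v)$ for $v \ge 0$, and hence for all $v$ by evenness.

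The heart of the argument, and the step I expect to be the main obstacle, is therefore the claim that if $D(u,v)$ is $C^\infty$, even in $v$, and flat along $\{v=0\}$, then the function $E$ defined by $E(u,w)=D(u,\sqrt w)$ for $w>0$ and $E(u,w)=0$ for $w\le 0$ is $C^\infty$. The difficulty is that each $w$-differentiation produces chain-rule factors $(\sqrt w)^{-1}$ that a priori blow up as $w\to 0^+$. I would control this using the division lemma (Fact \ref{thm:innsinohodai}): since $\partial_v D$ is odd we have $\partial_v D(u,0)=0$, so $\partial_v D(u,v)=v\,D_1(u,v)$ with $D_1$ smooth, even, and again flat; consequently $\partial_w E(u,w) = \tfrac12 D_1(u,\sqrt w)$ for $w>0$, which has exactly the same shape as $E$ but with the flat even function $D_1$ in place of $D$. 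Iterating, every partial derivative $\partial_u^\alpha \partial_w^\beta E$ on $\{w>0\}$ equals a constant times $(\text{flat even smooth})(u,\sqrt w)$, and flatness forces each such expression to tend to $0$ as $w\to 0^+$, matching the identically zero derivatives coming from $\{w<0\}$. A routine one-sided mean-value estimate then upgrades these agreeing limits to genuine continuity of all derivatives across $w=0$, giving $E\in C^\infty$ with every derivative vanishing on $\{w\le 0\}$.

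Finally I would assemble the pieces: $g=\tilde g+E$ is $C^\infty$ near the origin and satisfies $f(u,v)=g(u,v^2)$ as checked above. The parameter $u$ requires no separate treatment throughout, since $\partial_u$ commutes with the substitution $v\mapsto\sqrt w$ and preserves both evenness and flatness, so all the estimates are uniform and joint in $(u,w)$. The only genuinely analytic inputs are Borel's theorem and the division lemma already recorded as Fact \ref{thm:innsinohodai}; the remaining work is the bookkeeping showing that flatness defeats the $(\sqrt w)^{-\beta}$ singularities produced by differentiation.
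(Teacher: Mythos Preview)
The paper does not supply a proof of Whitney's lemma; it is quoted as a known Fact with references to \cite{SUY1} and \cite{Whitney}, so there is no in-paper argument to compare yours against.

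Your argument is sound. The core mechanism---each $w$-derivative of a composition with $\sqrt w$ produces a factor $(2\sqrt w)^{-1}$, which is absorbed by writing the odd function $\partial_v D$ as $v$ times an even smooth function via the division lemma (Fact~\ref{thm:innsinohodai}), and iterating---is precisely Whitney's original idea. The verifications you sketch (that the quotient $D_1$ is again even and flat, that the one-sided mean-value theorem promotes agreeing limits of derivatives to genuine differentiability across $w=0$) all go through. The Borel step is a convenience rather than a necessity: one can run the same division-and-iterate scheme directly on $f$ instead of on the flat remainder $D$, obtaining $\partial_w^\beta[f(u,\sqrt w)]=c_\beta f_\beta(u,\sqrt w)$ with each $f_\beta$ smooth and even, whose one-sided limits at $w=0$ are the finite values $c_\beta f_\beta(u,0)$; one then extends from the half-space $\{w\ge 0\}$ to a full neighbourhood by Seeley's extension theorem. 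Your route trades Seeley for Borel-with-parameter, which is a reasonable exchange and arguably tidier, since extending a flat function by zero needs no extra tool.
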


\begin{lemma}\label{lem:General Whintney lemma}
Let $k$ be an integer greater than or equal to $1$. Then for any $C^{\infty}$ function $f : \mathbb{R} \rightarrow \mathbb{R}$, there exist $C^{\infty}$ functions $g_{l} : I \subset \mathbb{R} \rightarrow \mathbb{R}$ $(l = 1,2,\cdots, 2^{k})$ such that 

\begin{equation}\label{eq:General Whintney formula}
\displaystyle f(t) = \sum_{l=1}^{2^k} { t^{l-1} g_{l}(t^{2^k}) }, 
\end{equation}
where $I$ is an open interval around $t=0$.
\end{lemma}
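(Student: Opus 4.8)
The plan is to prove Lemma \ref{lem:General Whintney lemma} by induction on $k$. The base case $k=1$ is essentially Whitney's lemma (Fact \ref{thm:Whitney lemma}) applied after separating $f$ into its even and odd parts: writing $f(t) = \tfrac{1}{2}(f(t)+f(-t)) + \tfrac{1}{2}(f(t)-f(-t))$, the first summand is an even $C^\infty$ function of $t$, hence of the form $g_1(t^2)$ for some $C^\infty$ function $g_1$ by Fact \ref{thm:Whitney lemma} (viewing it as a function on $\mathbb{R}^2$ that is constant in the second variable, or directly the one-variable version), while the second summand is odd, so $\tfrac{1}{t}\cdot\tfrac{1}{2}(f(t)-f(-t))$ extends to an even $C^\infty$ function and equals $g_2(t^2)$ for some $C^\infty$ function $g_2$. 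This gives $f(t) = g_1(t^2) + t\,g_2(t^2)$, which is \eqref{eq:General Whintney formula} for $k=1$.

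For the inductive step, suppose the statement holds for $k$, so $f(t) = \sum_{l=1}^{2^k} t^{l-1} g_l(t^{2^k})$ with each $g_l$ a $C^\infty$ function. I would then apply the $k=1$ case to each $g_l$: there are $C^\infty$ functions $h_{l,1}, h_{l,2}$ with $g_l(u) = h_{l,1}(u^2) + u\,h_{l,2}(u^2)$. Substituting $u = t^{2^k}$ gives $g_l(t^{2^k}) = h_{l,1}(t^{2^{k+1}}) + t^{2^k} h_{l,2}(t^{2^{k+1}})$, hence
\[
f(t) = \sum_{l=1}^{2^k} t^{l-1}\bigl(h_{l,1}(t^{2^{k+1}}) + t^{2^k} h_{l,2}(t^{2^{k+1}})\bigr) = \sum_{l=1}^{2^k} t^{l-1} h_{l,1}(t^{2^{k+1}}) + \sum_{l=1}^{2^k} t^{2^k + l - 1} h_{l,2}(t^{2^{k+1}}).
\]
The exponents $l-1$ in the first sum run over $0,\dots,2^k-1$ and the exponents $2^k+l-1$ in the second sum run over $2^k,\dots,2^{k+1}-1$, so together they cover each residue $0,1,\dots,2^{k+1}-1$ exactly once. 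Relabelling, define $g_m^{\mathrm{new}} := h_{m,1}$ for $1 \le m \le 2^k$ and $g_m^{\mathrm{new}} := h_{m-2^k,2}$ for $2^k+1 \le m \le 2^{k+1}$; then $f(t) = \sum_{m=1}^{2^{k+1}} t^{m-1} g_m^{\mathrm{new}}(t^{2^{k+1}})$, completing the induction.

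I do not anticipate a genuine obstacle here: the only subtlety is making sure the one-variable form of Whitney's lemma is available. Fact \ref{thm:Whitney lemma} is stated for functions on $\mathbb{R}^2$, but it applies to a one-variable function $\phi(t)$ by regarding $F(u,v) := \phi(v)$ (which satisfies $F(u,v) = F(u,-v)$ when $\phi$ is even), giving $\phi(v) = g(u, v^2)$; since the left side is independent of $u$, so is $g$, yielding $\phi(v) = \tilde g(v^2)$. The even/odd decomposition and the fact that an odd $C^\infty$ function divided by $t$ extends smoothly (again a consequence of Whitney's lemma, or of Taylor's theorem with integral remainder) are standard. The bookkeeping of exponents in the inductive step is the one place to be careful, but it is purely combinatorial and works out cleanly because $\{0,\dots,2^k-1\} \sqcup \{2^k,\dots,2^{k+1}-1\} = \{0,\dots,2^{k+1}-1\}$.
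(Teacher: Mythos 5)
Your proof is correct and follows essentially the same route as the paper: an even/odd decomposition combined with Whitney's lemma (and the division lemma for the odd part) to get the case $k=1$, followed by induction, re-applying the decomposition to each $g_l$. The paper states the inductive step only as ``by the same argument for $g_1$ and $g_2$,'' so your explicit bookkeeping of the exponents $\{0,\dots,2^k-1\}\sqcup\{2^k,\dots,2^{k+1}-1\}=\{0,\dots,2^{k+1}-1\}$ simply fills in detail the paper leaves implicit.
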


\begin{proof}
We use the mathematical induction and Whitney's lemma. In fact, we difine functions 
\begin{equation*}
u(t) := \dfrac{f(t) + f(-t)}{2},\quad  v(t) := \dfrac{f(t) - f(-t)}{2}. 
\end{equation*}
Since $u(t)$ is an even function, there exists a $C^{\infty}$ function $g_{1}$ such that $u(t) = g_{1}(t^2)$ by Whitney's lemma. Moreover, since $v(t)$ is an odd function, there exists a $C^{\infty}$ function $g_{2}$ such that $u(t) = tg_{2}(t^2)$ by the division lemma and  Whitney's lemma. Note that $f(t) = u(t) + v(t)$. Therefore we see that

\begin{equation*}
f(t) = g_{1}(t^2) + t g_{2}(t^2) 
\end{equation*}
holds on an open interval containing $t=0$. By the same argument as above for $g_{1}$ and $g_{2}$, we have the assertion.
\end{proof}

\textit{Proof of Proposition \ref{prop:(4,5)cusp form}}. By Lemma \ref{lem:General Whintney lemma}, there exist $\mathbb{R}^2$-valued $C^{\infty}$ functions $M_{1}(s)$, $M_{2}(s)$, $M_{3}(s)$ and $M_{4}(s)$ defined on an open interval containing $s=0$ such that  

\begin{align}\label{align:Whitney form 4,5cusp}
M(s) = M_{1}(s^4) + sM_{2}(s^4) + s^2 M_{3}(s^4) + s^3 M_{4}(s^4).
\end{align}
Since $M(s)$ satisfies $M(0) = M'(0) = \cdots = M^{(7)}(0) = \bold{0}$ and $M^{(11)}(0) = \bold{0}$, there exist $\mathbb{R}^2$-valued $C^{\infty}$ functions $\tilde{M}_{1}(s)$, $\tilde{M}_{2}(s)$, $\tilde{M}_{3}(s)$ and $\tilde{M}_{4}(s)$ defined on an open interval containing  $s=0$ such that 

\begin{align*}
M_{i}(s) = s^2 \tilde{M}_{i}(s), M_{4}(s) = s^3 \tilde{M}_{4}(s), \quad (i = 1, 2, 3).
\end{align*}
Substituting these results into (\ref{align:Whitney form 4,5cusp}), we get the following.

\begin{align*}\label{align: Whintney form 4,5cusp No2}
M(s) = s^8 \tilde{M}_{1}(s^4) + s^9 \tilde{M}_{2}(s^4) + s^{10} \tilde{M}_{3}(s^4) + s^{15} \tilde{M}_{4}(s^4). 
\end{align*}
Define a $C^{\infty}$ map $\psi : \mathbb{R}^2 \rightarrow \mathbb{R}^2$ by 
\begin{align*}
\psi(X, Y) = X^2 \tilde{M}_{1}(X) +  XY \tilde{M}_{2}(X) +  Y^2 \tilde{M}_{3}(X) +  Y^3 \tilde{M}_{4}(X).
\end{align*}
Then we see that $\psi(s^4, s^5) = M(s)$. Define a $C^{\infty}$ diffeomorphism $\Psi : \mathbb{R}^2 \rightarrow \mathbb{R}^2$ by 
\begin{align*}
\Psi(X, Y) = (X, Y) + \psi(X, Y). 
\end{align*}
Therefore we get  
\begin{align}
\Psi(s^4, s^5) = (s^4, s^5) + M(s) = \alpha(s). 
\end{align}
This concludes the proof. 
\qed

\subsection{\bf Eliminations of the $7$-th order and the $11$-th order terms}

The essential problems of a criterion for $(4, 5)$-cusps are the treatments of the $7$-th order and $11$-th order terms. It is due to the fact that $7$, $11$ $\notin \{ 4p + 5q$ ; $p, q \in \mathbb{Z}_{>0} \}$. In this step, we discuss the method of them. 

\begin{lemma}\label{lem:normal form of smooth (n,n+1)cusp}
Let $\gamma : I \rightarrow \mathbb{R}^2$ be a $C^{\infty}$ curve. Then $\gamma$ satisfies $(\ref{align:C1-diffeo-classification01})$ and $(\ref{align:C1-diffeo-classification02})$ if and only if $\gamma$ is $\mathcal{A}$-equivalent to $\gamma_{0}(t) = (t^n , t^{n+1}) + h_{n+3}(t)$ at $t=0$, where $h_{n+3}$ is a $\mathbb{R}^2$-valued $C^{\infty}$ function satisfying $h_{n+3}(0) = h_{n+3}'(0) \cdots = h_{n+3}^{(n+2)}(0) = \bold{0}$. 
\end{lemma}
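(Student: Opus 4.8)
The plan is to reduce the statement to Proposition \ref{thm:C1-diffeo-cusp-normal-form}, which already gives that $\gamma$ satisfies $(\ref{align:C1-diffeo-classification01})$ and $(\ref{align:C1-diffeo-classification02})$ if and only if $\gamma$ is $\mathcal{A}$-equivalent to $C_{(n,n+1)}(s) = (s^n, s^{n+1}Y_0(s))$ with $Y_0$ smooth and $Y_0(0)\neq 0$. The ``only if'' direction is then immediate: if $\gamma$ is $\mathcal{A}$-equivalent to $\gamma_0(t) = (t^n, t^{n+1}) + h_{n+3}(t)$ with the stated flatness of $h_{n+3}$, then a direct Leibniz/Faà di Bruno computation (or simply Proposition \ref{prop:coordinate-condition01}) shows $\gamma_0$ satisfies $(\ref{align:C1-diffeo-classification01})$ and $(\ref{align:C1-diffeo-classification02})$, since the first $n-1$ derivatives of $\gamma_0$ vanish and $\gamma_0^{(n)}(0) = (n!,0)$, $\gamma_0^{(n+1)}(0) = (0,(n+1)!)$ give a nonzero determinant; hence $\gamma$ does too.

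For the ``if'' direction, I would start from the normal form $C_{(n,n+1)}(s) = (s^n, s^{n+1}Y_0(s))$ provided by Proposition \ref{thm:C1-diffeo-cusp-normal-form} and perform a target diffeomorphism to normalize the leading coefficient: since $Y_0(0)\neq 0$, after the linear rescaling $(X,Y)\mapsto (c_1 X, c_2 Y)$ for suitable constants (and, if needed, reflections) one may assume $Y_0(0)=1$. Writing $Y_0(s) = 1 + s\,\widetilde{Y}(s)$ by the division lemma (Fact \ref{thm:innsinohodai}), we get
\begin{align*}
C_{(n,n+1)}(s) = (s^n,\ s^{n+1} + s^{n+2}\widetilde{Y}(s)).
\end{align*}
Now I want to absorb the tail $s^{n+2}\widetilde{Y}(s)$ into an $\mathbb{R}^2$-valued remainder that is flat to order $n+2$. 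The key point is that $s^{n+2}\widetilde{Y}(s)$, regarded as a function of $X=s^n$ and $Y=s^{n+1}$, can be written as $\Phi(X,Y)$ for a smooth germ $\Phi$ with $\Phi$ vanishing to second order — indeed $s^{n+2} = X\cdot Y \cdot s^{1-n}\cdot s^{?}$ is not quite monomial in $X,Y$, so here I would instead invoke a Whitney-type expansion (as in Lemma \ref{lem:General Whintney lemma}) to write $\widetilde{Y}(s)$ as a sum $\sum_{l=0}^{n-1} s^l g_l(s^n)$, substitute, and check that each resulting term $s^{n+2+l}g_l(s^n)$ is of the form (monomial in $X,Y$ of degree $\geq 2$)$\,\times\,$(smooth function of $X$); the exponents $n+2,\dots,2n+1$ are all representable as $ap + b q$ with $p=n$, $q=n+1$ and $a+b\geq 2$ precisely because they lie strictly between $n+1$ and $2(n+1)$. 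Defining $\Psi(X,Y) = (X,Y) + (0,\Phi(X,Y))$ gives a diffeomorphism germ with $\Psi(s^n,s^{n+1}) = C_{(n,n+1)}(s)$, so $C_{(n,n+1)}$ — and hence $\gamma$ — is $\mathcal{A}$-equivalent to $(t^n,t^{n+1})$, which is trivially of the desired form $(t^n,t^{n+1}) + h_{n+3}(t)$ with $h_{n+3}\equiv \bold{0}$.

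I expect the main obstacle to be the bookkeeping in the Whitney-type step: one must verify carefully that every exponent appearing in the tail after the expansion $\widetilde{Y}(s) = \sum_{l} s^l g_l(s^n)$ is genuinely realizable as a monomial in $X=s^n,\ Y=s^{n+1}$ of total degree at least two with a smooth coefficient, so that the correction can legitimately be packaged as a single target diffeomorphism rather than merely a formal power series move. Once that is in place, the flatness condition on $h_{n+3}$ is automatic since we end up at the exact monomial $(t^n,t^{n+1})$; conversely, if one prefers to keep a genuine remainder (for later use in Section \ref{sec:criterion for (4, 5)cusp}), one stops the normalization one step earlier and simply records that the leftover $\mathbb{R}^2$-valued term is flat to order $n+2$ by construction.
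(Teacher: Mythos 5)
Your ``only if'' direction is fine, but the ``if'' direction contains a fatal error. Starting from $C_{(n,n+1)}(s)=(s^n,\,s^{n+1}+s^{n+2}\widetilde Y(s))$ and expanding $\widetilde Y(s)=\sum_{l=0}^{n-1}s^l g_l(s^n)$, you claim every exponent $n+2+l$, $0\le l\le n-1$, is of the form $an+b(n+1)$ with $a+b\ge 2$ ``because it lies strictly between $n+1$ and $2(n+1)$.'' This is false for every $n\ge 3$: the only representable exponents in that range are $2n$ and $2n+1$, so $n+2,\dots,2n-1$ are gaps of the numerical semigroup $\langle n,n+1\rangle$, and the corresponding terms cannot be absorbed by any target diffeomorphism $\Psi(X,Y)=(X,Y)+\psi(X,Y)$ with $\psi$ vanishing to second order along $(s^n,s^{n+1})$. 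For $n=4$ the unabsorbable exponents are $6$ and $7$; in particular your argument would show that $(t^4,t^5+t^7)$ is $\mathcal{A}$-equivalent to $(t^4,t^5)$, contradicting the non-equivalence asserted in the introduction and proved in Theorem \ref{thm:criterion of other type}. So the conclusion that $\gamma$ is $\mathcal{A}$-equivalent to $(t^n,t^{n+1})$ is not merely unproven --- it is false, and this obstruction is exactly the subject of Section \ref{sec:criterion for (4, 5)cusp}.

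Even the weaker statement the lemma actually asserts --- killing only the $t^{n+2}$ coefficient --- cannot be achieved by target diffeomorphisms alone, since $n+2<2n$ for $n\ge 3$, so $s^{n+2}$ is not expressible through $X=s^n$, $Y=s^{n+1}$. The missing ingredient is a source reparametrization. The paper's proof Taylor-expands $\gamma$, normalizes the degree-$n$ and degree-$(n+1)$ vector coefficients to $(1,0)$ and $(0,1)$ by a linear target map, and then composes with $\tau(\tilde t)=\tilde t+c_1\tilde t^2+c_2\tilde t^3$ for suitable $c_1,c_2$; this removes both $\tilde t^{n+2}$ coefficients at the cost of creating a $\tilde t^{n+1}$ term in the first component, which is then eliminated by the shear $\tilde f(x,y)=\bigl(x+\tfrac{n}{n+1}\hat b_{n+2}y,\,y\bigr)$. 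Your proposal needs this reparametrization step (applied either to the Taylor expansion or to $C_{(n,n+1)}$ from Proposition \ref{thm:C1-diffeo-cusp-normal-form}); your closing remark about ``stopping the normalization one step earlier'' does not rescue the argument, because the step you would stop before is precisely the one that never worked.
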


\begin{proof}
If $\gamma$ is $\mathcal{A}$-equivalent to $\gamma_{0}(t) = (t^n , t^{n+1}) + h_{n+3}(t)$ at $t=0$, then $\gamma$ satisfies $(\ref{align:C1-diffeo-classification01})$ and $(\ref{align:C1-diffeo-classification02})$,  by Proposition \ref{prop:coordinate-condition01}. 

Conversely, assume that $\gamma$ satisfies $(\ref{align:C1-diffeo-classification01})$ and $(\ref{align:C1-diffeo-classification02})$. By $\gamma'(0) = \gamma''(0) = \cdots = \gamma^{(n-1)}(0) = \bold{0}$, $\gamma$ can be written as 

\begin{align*}
\gamma(t) = (a_{n} t^n + a_{n+1} t^{n+1} + a_{n+2} t^{n+2}, b_{n} t^n + b_{n+1} t^{n+1} + b_{n+2} t^{n+2}) + H_{n + 3} (t), 
\end{align*}
where $H_{n+3}$ is a $\mathbb{R}^2$-valued $C^{\infty}$ function satisfying $H_{n+3}(0) = H_{n+3}'(0) \cdots = H_{n+3}^{(n+2)}(0) = \bold{0}$. Since $\det{(\gamma^{(n)}(0), \gamma^{(n+1)}(0))} \neq 0$, there exists a linear coordinate transformation of $\mathbb{R}^2$ such that $\gamma$ is $\mathcal{A}$-equivalent to

\begin{align}\label{align:General (n,n+1)cusp formation}
\gamma_{L}(t) = (t^n + \hat{a}_{n+2} t^{n+2} , t^{n+1} + \hat{b}_{n+2} t^{n+2} ) + \hat{H}_{n+3}(t),
\end{align}
where $\hat{H}_{n+3}$ is a $\mathbb{R}^2$-valued $C^{\infty}$ function satisfying $\hat{H}_{n+3}(0) = \hat{H}_{n+3}'(0) \cdots = \hat{H}_{n+3}^{(n+2)}(0) = \bold{0}$. Define a parameter transformation $\tau : \mathbb{R} \rightarrow \mathbb{R}$ by

\begin{align}\label{align:General transformation}
\tau(\tilde{t}) = \tilde{t} + c_{1}\tilde{t}^2 + c_{2} \tilde{t}^3,
\end{align}
where $c_{1} := - \dfrac{\hat{b}_{n+2}}{n+1} $, $c_{2} := - \dfrac{1}{n} \Bigl( \hat{a}_{n+2} +  \dfrac{n(n-1)}{2(n+1)^2} \hat{b}^2 _{n+2} \Bigl)$. By (\ref{align:General (n,n+1)cusp formation}) and (\ref{align:General transformation}), we obtain 

\begin{equation*}\label{equation:(n, n+1)cusp parameter change}
\gamma_{L} \circ \tau(\tilde{t}) = \Bigl( \tilde{t}^n - \dfrac{n}{n+1}\hat{b}_{n+2} \tilde{t}^{n+1}, \tilde{t}^{n+1} \Bigl) + \tilde{H}_{n+3}(\tilde{t}),
\end{equation*}
where $\tilde{H}_{n+3}$ is a $\mathbb{R}^2$-valued $C^{\infty}$ function satisfying $\tilde{H}_{n+3}(0) = \tilde{H}_{n+3}'(0) \cdots =\tilde{H}_{n+3}^{(n+2)}(0) = \bold{0}$. Define a  $C^{\infty}$ diffeomorphism $\tilde{f} : \mathbb{R}^2 \rightarrow \mathbb{R}^2$ by 

\begin{align*}
\tilde{f}(x,y) = \Bigl( x + \dfrac{n}{n+1} \hat{b}_{n+2} y , y \Bigl). 
\end{align*}
Composing $\gamma_{L} \circ \tau$ and $\tilde{f}$, we have

\begin{align}\label{cuspidal:2}
\tilde{f} \circ \gamma_{L} \circ \tau(\tilde{t}) = (\tilde{t}^n , \tilde{t}^{n+1}) + h_{n+3}(\tilde{t}), 
\end{align}
where $h_{n+3}$ is a $\mathbb{R}^2$-valued $C^{\infty}$ function satisfying $h_{n+3}(0) = h_{n+3}'(0) \cdots = h_{n+3}^{(n+2)}(0) = \bold{0}$. Therefore we have the assertion.
\end{proof}

\begin{corollary}\label{lem:normal form of smooth (n,n+1)cusp part2}
Let $\gamma : I \rightarrow \mathbb{R}^2$ be a $C^{\infty}$ curve. Then $\gamma$ satisfies  $(\ref{align:C1-diffeo-classification01})$ and $(\ref{align:C1-diffeo-classification02})$ if and only if there exists a real number $T$ such that $\gamma$ is $\mathcal{A}$-equivalent to $\tilde{\gamma}(s) = (s^n , s^{n+1} + Ts^{n + 3}) + h_{n+4}(s)$ at $s=0$, where $h_{n+4}$ is a $\mathbb{R}^2$-valued $C^{\infty}$ function satisfying $h_{n+4}(0) = h_{n+4}'(0) \cdots = h_{n+4}^{(n+3)}(0) = \bold{0}$. 
\end{corollary}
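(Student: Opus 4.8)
The plan is to obtain this refinement of Lemma \ref{lem:normal form of smooth (n,n+1)cusp} by absorbing one further jet level by means of a single quartic-type parameter change on the source, in the same spirit as the proof of that lemma. The ``if'' direction is immediate: if $\gamma$ is $\mathcal A$-equivalent to $\tilde\gamma(s) = (s^n, s^{n+1} + Ts^{n+3}) + h_{n+4}(s)$ with $h_{n+4}(0) = h_{n+4}'(0) = \cdots = h_{n+4}^{(n+3)}(0) = \bold{0}$, then differentiating at the origin gives $\tilde\gamma'(0) = \cdots = \tilde\gamma^{(n-1)}(0) = \bold{0}$, $\tilde\gamma^{(n)}(0) = n!\,(1,0)$ and $\tilde\gamma^{(n+1)}(0) = (n+1)!\,(0,1)$, so $\tilde\gamma$ satisfies $(\ref{align:C1-diffeo-classification01})$ and $(\ref{align:C1-diffeo-classification02})$, and hence so does $\gamma$ by Proposition \ref{prop:coordinate-condition01}.

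For the converse, suppose $\gamma$ satisfies $(\ref{align:C1-diffeo-classification01})$ and $(\ref{align:C1-diffeo-classification02})$. By Lemma \ref{lem:normal form of smooth (n,n+1)cusp} we may assume $\gamma(t) = (t^n, t^{n+1}) + h_{n+3}(t)$, where $h_{n+3}(0) = h_{n+3}'(0) = \cdots = h_{n+3}^{(n+2)}(0) = \bold{0}$. Using Taylor's theorem (or the division lemma, Fact \ref{thm:innsinohodai}) I would split off the lowest-order term of $h_{n+3}$ and write $h_{n+3}(t) = (a,b)\,t^{n+3} + r(t)$, where $(a,b) = h_{n+3}^{(n+3)}(0)/(n+3)!$ and $r$ is a $\mathbb{R}^2$-valued $C^{\infty}$ function with $r(0) = r'(0) = \cdots = r^{(n+3)}(0) = \bold{0}$. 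Then I would apply the parameter transformation $\tau(s) = s - \tfrac{a}{n}s^4$, which is a local $C^{\infty}$ diffeomorphism fixing the origin since $\tau'(0) = 1$, and compute $\gamma \circ \tau$.

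The point of this choice of $\tau$ is the cancellation in the first coordinate: since $\tau(s)^n = s^n - a s^{n+3} + O(s^{n+6})$ and $a\,\tau(s)^{n+3} = a s^{n+3} + O(s^{n+6})$, the two $s^{n+3}$-terms cancel, while $\tau(s)^{n+1} = s^{n+1} + O(s^{n+4})$ contributes no $s^{n+3}$-term to the second coordinate and $b\,\tau(s)^{n+3} = b s^{n+3} + O(s^{n+6})$ survives there; moreover, by the chain rule (Fact \ref{fact:chain-rule}), $r \circ \tau$ again vanishes together with its first $n+3$ derivatives at $s=0$ because $\tau$ is a diffeomorphism fixing $0$, so it contributes nothing of degree $\le n+3$. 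Collecting all terms of order $\ge n+4$ coming from the three binomial expansions and from $r \circ \tau$ into a single $\mathbb{R}^2$-valued $C^{\infty}$ function $h_{n+4}$ (which then satisfies $h_{n+4}(0) = \cdots = h_{n+4}^{(n+3)}(0) = \bold{0}$), and setting $T := b$, I obtain $\gamma \circ \tau(s) = (s^n, s^{n+1} + Ts^{n+3}) + h_{n+4}(s)$, which is $\mathcal A$-equivalent to $\gamma$, as desired.

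The work here is purely at the level of bookkeeping, and the only points to check carefully are that $\tau$ introduces no spurious $s^{n+3}$-term beyond the one arranged to cancel (the first genuine correction from $\tau$ beyond the identity lands at degree $n+3$ in the first coordinate, where it kills the term coming from $h_{n+3}$, and only at degree $n+4$ in the second coordinate), and that precomposition with the local diffeomorphism $\tau$ preserves the relevant vanishing orders. Neither is a real obstacle, so this corollary amounts to one more turn of the machinery of Lemma \ref{lem:normal form of smooth (n,n+1)cusp}.
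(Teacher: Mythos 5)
Your proposal is correct and follows essentially the same route as the paper: after reducing to the normal form of Lemma \ref{lem:normal form of smooth (n,n+1)cusp}, you apply the same quartic reparametrization $s \mapsto s - \tfrac{a}{n}s^{4}$ to cancel the $s^{n+3}$-term in the first coordinate and set $T$ equal to the surviving coefficient in the second. The only difference is that you spell out the bookkeeping (the binomial expansions and the preservation of vanishing orders under the diffeomorphism) that the paper leaves implicit.
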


\begin{proof}
If $\gamma$ is $\mathcal{A}$-equivalent to $\tilde{\gamma}(s) = (s^n , s^{n+1} + Ts^{n + 3}) + h_{n+4}(s)$ at $s=0$, then $\gamma$ satisfies $(\ref{align:C1-diffeo-classification01})$ and $(\ref{align:C1-diffeo-classification02})$, by Proposition \ref{prop:coordinate-condition01}.

We assume that $\gamma$ satisfies $(\ref{align:C1-diffeo-classification01})$ and $(\ref{align:C1-diffeo-classification02})$. By Lemma \ref{lem:normal form of smooth (n,n+1)cusp}, $\gamma$ is $\mathcal{A}$-equivalent to $\gamma_{0}(t) = (t^n , t^{n+1}) + h_{n+3}(t)$ at $t=0$, where $h_{n+3}$ is a $\mathbb{R}^2$-valued $C^{\infty}$ function satisfying $h_{n+3}(0) = h_{n+3}'(0) \cdots = h_{n+3}^{(n+2)}(0) = \bold{0}$. Thus we can write 

\begin{align*}
h_{n+3}(t) = ( \tilde{a}_{n+3} t^{n+3} + \cdots, \tilde{b}_{n+3} t^{n+3} + \cdots).
\end{align*}
Define a parameter transformation $\phi : \mathbb{R} \rightarrow \mathbb{R}$ by 

\begin{align*}
\phi(s) = s - \dfrac{\tilde{a}_{n+3}}{n}s^4. 
\end{align*}
Composing $\gamma_{0}$ and $\phi$, we obtain

\begin{align}
\gamma_{0} \circ \phi(s) = (s^n, s^{n+1} + \tilde{b}_{n+3}s^{n+3}) + \tilde{h}_{n+4}(s), 
\end{align}
where $\tilde{h}_{n+4}$ is a $\mathbb{R}^2$-valued $C^{\infty}$ function satisfying $\tilde{h}_{n+4}(0) = \tilde{h}_{n+4}'(0) \cdots = \tilde{h}_{n+4}^{(n+3)}(0) = \bold{0}$. Setting $T = \tilde{b}_{n+3}$ and $h_{n+4}(s) = \tilde{h}_{n+4}(s)$, we have the assertion.
\end{proof}

Now, we are ready to discuss the elimination of the $7$-th order and the $11$-th order terms. The following result plays an essential role in the proof of a criterion for $(4, 5)$-cusps given by Lemma \ref{lemma:coeffient of 11}.  

\begin{lemma}\label{lem:coeffient of 7}
Let $\gamma : I \rightarrow \mathbb{R}^2$ be a $C^{\infty}$ curve. Then $\gamma$ satisfies $\gamma'(0) = \gamma''(0) = \gamma'''(0) = \bold{0}$, $A \neq 0$ and $(\ref{eqn:(4,5)cusp criterion condition})$ if and only if $\gamma$ is $\mathcal{A}$-equivalent to a $\gamma_{1}(s) = (s^4, s^5) + h_{8}(s)$ at $s=0$, where $h_{8}$ is a $\mathbb{R}^2$-valued $C^{\infty}$ function satisfying $h_{8}(0) = h_{8}'(0) = \cdots = h_{8}^{(7)}(0) = \bold{0}$.
\end{lemma}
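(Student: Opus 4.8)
The plan is to start from the normal form already available: by Corollary \ref{lem:normal form of smooth (n,n+1)cusp part2} with $n=4$, the hypotheses $\gamma'(0)=\gamma''(0)=\gamma'''(0)=\bold{0}$ and $A\neq 0$ (which together are equivalent to conditions $(\ref{align:C1-diffeo-classification01})$ and $(\ref{align:C1-diffeo-classification02})$ for $n=4$) imply that $\gamma$ is $\mathcal{A}$-equivalent to $\tilde\gamma(s)=(s^4,\,s^5+Ts^7)+h_8(s)$ for some real $T$, where $h_8$ vanishes to order $\geq 8$ at $s=0$. (Here $n+3=7$ and $n+4=8$.) So the $7$-th order term survives as the coefficient $T$, and the content of the lemma is precisely that the differential condition $(\ref{eqn:(4,5)cusp criterion condition})$, namely $-77B^2+105AD+60AC=0$, is equivalent to being able to remove this $T s^7$ term by a further $\mathcal{A}$-equivalence — i.e. to $T$ being "effectively zero". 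The strategy is therefore: (i) express $\kappa_q$ (equivalently $-77B^2+105AD+60AC$) for the normal form $(s^4,\,s^5+Ts^7)+h_8(s)$ in terms of $T$ and the low-order data of $h_8$; (ii) show this expression reduces to a nonzero constant times $T$ (so the vanishing of the invariant is equivalent to $T=0$); and then (iii) construct the explicit coordinate changes eliminating the term when $T=0$.

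For step (i), since $A,B,C,D$ are invariants up to nonzero factors under the allowed coordinate changes (Proposition \ref{prop:coordinate-condition10} and Remark \ref{remark:invariant of sign}), I would compute them directly for $(s^4,\,s^5+Ts^7)$, ignoring $h_8$ whose contribution to the relevant derivatives at $0$ either vanishes or enters only through higher-order terms that wash out. A direct computation of $\gamma^{(4)}(0),\dots,\gamma^{(7)}(0)$ for $(s^4,s^5+Ts^7)$ gives $\gamma^{(4)}(0)=(4!,0)$, $\gamma^{(5)}(0)=(0,5!)$, $\gamma^{(6)}(0)=(0,0)$ and $\gamma^{(7)}(0)=(0,\,7!\,T)$; hence $A=\det(\gamma^{(5)},\gamma^{(4)})=-4!\,5!$, $B=\det(\gamma^{(6)},\gamma^{(4)})=0$, $C=\det(\gamma^{(7)},\gamma^{(4)})=-4!\,7!\,T$, and $D=\det(\gamma^{(6)},\gamma^{(5)})=0$. (The contributions of $h_8$ to $\gamma^{(6)}(0)$ and to the relevant entries are zero since $h_8^{(6)}(0)=h_8^{(7)}(0)=\bold 0$, except $\gamma^{(7)}$ picks up $h_8^{(7)}(0)=\bold0$ too — so in fact $h_8$ contributes nothing up to order $7$.) Therefore $-77B^2+105AD+60AC = 60AC = 60\cdot(-4!\,5!)\cdot(-4!\,7!)\,T$, which is a strictly negative constant times $T$. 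Consequently, under the assumed low-order vanishing, condition $(\ref{eqn:(4,5)cusp criterion condition})$ holds if and only if $T=0$.

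For step (iii) — the "if" direction — once $T=0$ we have $\tilde\gamma(s)=(s^4,s^5)+h_8(s)$ with $h_8$ vanishing to order $\geq 8$, which is exactly the claimed form $\gamma_1$. For the "only if" direction I would argue that the displayed expression $-77B^2+105AD+60AC$ is, up to a nonzero scalar, an $\mathcal A$-invariant (this is Proposition \ref{prop:coordinate-condition10}); hence if $\gamma$ is $\mathcal{A}$-equivalent to some $(s^4,s^5)+h_8(s)$, applying the step-(i) computation to *that* representative (for which again $C$, and hence the whole expression, vanishes because $\gamma^{(7)}(0)$ has zero second coordinate when $T=0$) shows $(\ref{eqn:(4,5)cusp criterion condition})$ holds; and $A\neq0$, $\gamma'(0)=\gamma''(0)=\gamma'''(0)=\bold0$ are invariant as well.

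The main obstacle is bookkeeping precision in step (i)/step (ii): one must be careful that the "error" function $h_8$ genuinely does not perturb the value of $-77B^2+105AD+60AC$ — a priori $h_8^{(8)}(0),\dots$ could feed into $\gamma^{(8)},\dots$ but those derivatives do not appear in $A,B,C,D$, so the claim is that only $\gamma^{(4)},\dots,\gamma^{(7)}$ at $0$ matter and $h_8$ contributes nothing there. One should also double-check that the normal-form reduction of Corollary \ref{lem:normal form of smooth (n,n+1)cusp part2} is compatible with preserving (or at worst rescaling by a nonzero factor) the invariant, so that "$\gamma$ satisfies $(\ref{eqn:(4,5)cusp criterion condition})$" transports correctly to "$T=0$". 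Neither of these is deep, but they are exactly where a careless argument would go wrong; everything else is the two explicit finite computations above plus assembling the coordinate changes.
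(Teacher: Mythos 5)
Your proposal is correct and follows essentially the same route as the paper: reduce to the normal form $(s^4,\,s^5+Ts^7)+h_8(s)$ via Corollary \ref{lem:normal form of smooth (n,n+1)cusp part2}, compute that $-77B^2+105AD+60AC$ equals a nonzero constant times $T$ (the $h_8$ term contributing nothing to derivatives of order $\leq 7$), and invoke the invariance propositions for the other direction. One harmless slip: $60AC=60\cdot(-4!\,5!)\cdot(-4!\,7!\,T)$ is a \emph{positive} constant ($20901888000$, matching the paper) times $T$, not a negative one --- irrelevant for this lemma, where only nonvanishing matters, but the sign does matter in Theorem \ref{thm:criterion of other type}.
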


\begin{proof}
If $\gamma$ is $\mathcal{A}$-equivalent to $\gamma_{1}(s) = (s^4, s^5) + h_{8}(s)$ at $s=0$, then $\gamma$ satisfies $\gamma'(0) = \gamma''(0) = \gamma'''(0) = \bold{0}$, $A \neq 0$ and (\ref{eqn:(4,5)cusp criterion condition}) by Propositions \ref{prop:coordinate-condition01} and \ref{prop:coordinate-condition10}. 

Conversely, assume that $\gamma$ satisfies $\gamma'(0) = \gamma''(0) = \gamma'''(0) = \bold{0}$, $A \neq 0$ and (\ref{eqn:(4,5)cusp criterion condition}). By Corollary \ref{lem:normal form of smooth (n,n+1)cusp part2}, there exists a real number $T$ such that $\gamma$ is $\mathcal{A}$-equivalent to $\tilde{\gamma}(s) = (s^4, s^5 + T s^7) + h_{8}(s)$ at $s=0$. Calculating the expression $-77B^2 + 105AD + 60AC$ for $\gamma_{1}$, we have 

\begin{align}\label{align; condition wtr T}
-77B^2 + 105AD + 60AC = 20901888000 T. 
\end{align}
Thus we get T = 0 and see that $\gamma$ is $\mathcal{A}$-equivalent to a $\gamma_{1}$ at $s=0$. This completes the proof.
\end{proof}

\begin{lemma}\label{lemma:coeffient of 11}
Let $\gamma : I \rightarrow \mathbb{R}^2$ be a $C^{\infty}$ curve. Then $\gamma$ satisfies $\gamma'(0) = \gamma''(0) = \gamma'''(0) = \bold{0}$, $A \neq 0$ and $(\ref{eqn:(4,5)cusp criterion condition})$ if and only if $\gamma$ is $\mathcal{A}$-equivalent to $\gamma_{2}(s) = (s^4, s^5) + K_{8}(s)$ at $s=0$, where $K_{8}$ is a $\mathbb{R}^2$-valued $C^{\infty}$ function satisfying $K_{8}(0) = K_{8}'(0) = \cdots = K_{8}^{(7)}(0) = \bold{0}$ and $K_{8}^{(11)}(0) = \bold{0}$.
\end{lemma}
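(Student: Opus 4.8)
The plan is to start from the conclusion of Lemma \ref{lem:coeffient of 7}: since the three differential conditions ($\gamma'(0)=\gamma''(0)=\gamma'''(0)=\bm{0}$, $A\neq 0$, and $(\ref{eqn:(4,5)cusp criterion condition})$) hold, $\gamma$ is $\mathcal{A}$-equivalent to some $\gamma_{1}(s) = (s^{4}, s^{5}) + h_{8}(s)$ with $h_{8}$ flat to order $7$. The point left to prove is that the $11$-th order coefficient of the $y$-component can be killed without disturbing anything of order $\leq 10$; the $x$-component coefficient at order $11$ (and indeed all orders $\neq 4p+5q$) can be removed by a target diffeomorphism of the form used in Proposition \ref{prop:(4,5)cusp form}, since $11\notin\{4p+5q : p,q\in\mathbb{Z}_{>0}\}$ but every $x$-coefficient $a_{k}s^{k}$ with $k\geq 8$ can be absorbed. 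So first I would write $\gamma_{1}(s) = (s^{4} + (\text{higher }x\text{-terms}),\, s^{5} + \sum_{k\geq 8} c_{k} s^{k})$ and reduce, using source reparametrizations $s\mapsto s + (\text{cubic and higher corrections})$ and the target shears $X\mapsto X + (\text{polynomial in }X,Y)$ analogous to those in Lemmas \ref{lem:normal form of smooth (n,n+1)cusp}--\ref{lem:coeffient of 7}, to the situation where the only surviving low-order ``bad'' coefficient is the one at $s^{11}$ in the $y$-slot.

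The key step is then to show this remaining $s^{11}$ term is forced to vanish by the hypothesis $(\ref{eqn:(4,5)cusp criterion condition})$. Following the template of Lemma \ref{lem:coeffient of 7}, I would compute the invariant combination $-77B^{2}+105AD+60AC$ for the normal form $\gamma_{2}(s)=(s^{4}, s^{5}+U s^{11}) + (\text{terms that do not affect }A,B,C,D)$ — note that $A,B,C,D$ depend only on derivatives up to order $7$, so in this computation one must track how an $s^{11}$ term can influence those low-order determinants after the earlier reductions, or, more cleanly, re-do the reduction keeping the coefficient of $s^{11}$ as a free parameter $U$ alongside $T$ and verify that the invariant is, up to a nonzero numerical constant, a nonzero linear (or affine) function of the $s^{11}$-coefficient once $T=0$ has been imposed. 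Since $(\ref{eqn:(4,5)cusp criterion condition})$ says this invariant is zero, we conclude $U=0$, exactly as $T=0$ was concluded in (\ref{align; condition wtr T}).

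There is a subtlety I expect to be the main obstacle: the determinants $A,B,C,D$ only see derivatives of order $\leq 7$, so an $s^{11}$ coefficient cannot appear in them directly. The resolution is that after the normalizing source reparametrization the $s^{11}$ freedom and lower-order freedoms are coupled — concretely, a parameter change $s\mapsto \phi(s)$ that is used to normalize orders $8$, $9$, $10$ will reintroduce, or shift, the effective coefficient appearing at order $7$ or below in the non-monomial remainder; alternatively one works with the full jet and shows the orbit of $(s^{4},s^{5})+h_{8}$ under $\mathcal{A}$ meets the model $(s^{4},s^{5})+K_{8}$ with $K_{8}$ flat to order $7$ and having vanishing $11$-jet precisely when the invariant vanishes. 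So the careful bookkeeping is: after applying Lemma \ref{lem:coeffient of 7} we may assume $h_{8}$ is flat to order $7$; then use target maps $\Psi(X,Y)=(X,Y)+\psi(X,Y)$ with $\psi$ built from monomials $X^{i}Y^{j}$ of weight (in the $s$-grading, $X\sim s^4$, $Y\sim s^5$) at least $8$ to clean up the $y$-component at all orders of the form $4p+5q$, leaving only $s^{7}$ and $s^{11}$ unreachable; Lemma \ref{lem:coeffient of 7} already handled $s^{7}$, and the computation of $-77B^{2}+105AD+60AC$ on the resulting form — carried out as in the proof of Lemma \ref{lem:coeffient of 7} but now with the $s^{11}$-coefficient retained — yields a nonzero multiple of that coefficient, forcing it to zero. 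The converse direction is immediate from Propositions \ref{prop:coordinate-condition01} and \ref{prop:coordinate-condition10}, exactly as in the earlier lemmas.
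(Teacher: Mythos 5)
There is a genuine gap, and it sits precisely at what you identify as ``the key step.'' You propose to kill the residual $s^{11}$ coefficient in the $y$-component by showing that $-77B^{2}+105AD+60AC$, recomputed with that coefficient retained, is a nonzero multiple of it, so that hypothesis $(\ref{eqn:(4,5)cusp criterion condition})$ forces it to vanish. This cannot work: $A,B,C,D$ are functions of the $7$-jet of $\gamma$ at $0$ only, and for any curve of the form $(s^{4},s^{5})+h_{8}(s)$ with $h_{8}$ flat to order $7$ one has $B=C=D=0$, hence $-77B^{2}+105AD+60AC=0$ identically, no matter what the $s^{11}$ coefficient is. The ``coupling'' you hope for does not occur: a reparametrization $s\mapsto s+cs^{k}$ with $k\geq 2$ and a target shear supported in order $\geq 8$ both preserve flatness of the error term to order $7$, so they cannot feed order-$11$ data back into the $7$-jet. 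Concretely, $(s^{4},s^{5}+s^{11})$ satisfies $(\ref{eqn:(4,5)cusp criterion condition})$, so if your mechanism were valid its $s^{11}$ coefficient would have to be zero. A secondary error: the $x$-component term $\tilde a_{11}s^{11}$ cannot be absorbed by a target diffeomorphism as in Proposition \ref{prop:(4,5)cusp form} either, since monomials $X^{i}Y^{j}$ only produce orders $4i+5j$, and $11$ is not of that form.

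The correct mechanism --- and the one the paper uses --- is that the order-$11$ term is not an obstruction at all and requires no hypothesis beyond the conclusion of Lemma \ref{lem:coeffient of 7}: it is removed by a \emph{source} reparametrization. Writing $\gamma_{1}(s)=(s^{4},s^{5})+h_{8}(s)$ with $h_{8}(s)=(\tilde a_{8}s^{8}+\cdots,\,\tilde b_{8}s^{8}+\cdots)$ and setting $\phi(s)=s-\tfrac{\tilde b_{11}}{5}s^{7}-\tfrac{\tilde a_{11}}{4}s^{8}$, one has $(\phi(s))^{4}=s^{4}-\tfrac{4\tilde b_{11}}{5}s^{10}-\tilde a_{11}s^{11}+O(s^{12})$ and $(\phi(s))^{5}=s^{5}-\tilde b_{11}s^{11}+O(s^{12})$, while $h_{8}\circ\phi-h_{8}=O(s^{14})$; so $\gamma_{1}\circ\phi$ has vanishing $s^{11}$ coefficient in both slots (the new $s^{10}$ term in the $x$-slot is harmless, since the lemma only demands vanishing of orders $\leq 7$ and of order $11$). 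The invariant condition $(\ref{eqn:(4,5)cusp criterion condition})$ is entirely consumed in Lemma \ref{lem:coeffient of 7}, where via $(\ref{align; condition wtr T})$ it forces the genuine modulus $T$ at order $7$ to vanish; order $11$ is free. Your converse direction is fine.
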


\begin{proof}
If $\gamma$ is $\mathcal{A}$-equivalent to $\gamma_{2}(s) = (s^4, s^5) + K_{8}(s)$ at $s=0$, then clearly $\gamma$ satisfies  $\gamma'(0) = \gamma''(0) = \gamma'''(0) = \bold{0}$, $A \neq 0$ and $(\ref{eqn:(4,5)cusp criterion condition})$ by Lemma \ref{lem:coeffient of 7}. 

Assume that $\gamma$ satisfies $\gamma'(0) = \gamma''(0) = \gamma'''(0) = \bold{0}$, $A \neq 0$ and $(\ref{eqn:(4,5)cusp criterion condition})$. By Lemma \ref{lem:coeffient of 7}, $\gamma$ is $\mathcal{A}$-equivalent to a $\gamma_{1}(s) = (s^4, s^5) + h_{8}(s)$, where $h_{8}(t) = ( \tilde{a}_{8} t^{8} + \cdots, \tilde{b}_{8} t^{8} + \cdots)$. Define a coordinate transformation $\phi : \mathbb{R} \rightarrow \mathbb{R}$ by 

\begin{align}
\phi(s) =  s  - \dfrac{\tilde{b}_{11}}{5} s^7 - \dfrac{\tilde{a}_{11}}{4} s^8,
\end{align}
Composing $\gamma_{1}$ and $\phi$, the coefficients of the $11$-th order term are $(0, 0)$. Therefore we have the assertion.
\end{proof}

\subsection{\bf Proof of Theorem \ref{thm:criterion for (4, 5)cusp}}

If $\gamma$ satisfies $\gamma'(0) = \gamma''(0) = \gamma'''(0) = \bold{0}$, $A \neq 0$ and $(\ref{eqn:(4,5)cusp criterion condition})$, then $\gamma$ is $\mathcal{A}$-equivalent to $\gamma_{2}(s) = (s^4, s^5) + K_{8}(s)$ at $s=0$ by Lemma \ref{lemma:coeffient of 11}. By Proposition \ref{prop:(4,5)cusp form}, we see that $\gamma$ is also $\mathcal{A}$-equivalent to $Cusp_{(4,5)}$ at $s=0$. 

Assume that $\gamma$ is $\mathcal{A}$-equivalent to $Cusp_{(4,5)}$ at $s=0$. Using Proposition \ref{prop:coordinate-condition01} and Proposition \ref{prop:coordinate-condition10}, we get the conclusion. \qed

\begin{theorem}\label{thm:criterion of other type}
Let $\gamma : I \rightarrow \mathbb{R}^2$ be a $C^{\infty}$ curve.

\begin{itemize}
\item[(1)] $\gamma$ is $\mathcal{A}$-equivalent to $Cusp_{(4, 5; +7)}(t) = (t^4, t^5 + t^7)$ at $t=0$ if and only if $\gamma'(0) = \gamma''(0) = \gamma'''(0) = \bold{0}$, $A \neq 0$ and $-77B^2 + 105AD + 60AC > 0$,
\item[(2)]$\gamma$ is $\mathcal{A}$-equivalent to $Cusp_{(4, 5; -7)}(t) =(t^4, t^5 - t^7)$ at $t=0$ if and only if $\gamma'(0) = \gamma''(0) = \gamma'''(0) = \bold{0}$, $A \neq 0$ and $-77B^2 + 105AD + 60AC < 0$,
\item[(3)] $Cusp_{(4, 5; +7)}$ and $Cusp_{(4, 5; -7)}$ are not $\mathcal{A}$-equivalent at $t=0$,
\end{itemize}
where $A = \det{(\gamma^{(5)}(0) , \gamma^{(4)}(0))}$, $B = \det{(\gamma^{(6)}(0), \gamma^{(4)}(0))}$, $C = \det{(\gamma^{(7)}(0), \gamma^{(4)}(0))}$ and $D = \det{(\gamma^{(6)}(0), \gamma^{(5)}(0))}$. 
\end{theorem}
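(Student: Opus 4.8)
The plan is to mirror the strategy used for Theorem \ref{thm:criterion for (4, 5)cusp}, tracking the sign of the quantity $-77B^2 + 105AD + 60AC$ rather than just its vanishing. First I would reduce to a normal form: assuming $\gamma'(0)=\gamma''(0)=\gamma'''(0)=\bm{0}$, $A\neq 0$, apply Corollary \ref{lem:normal form of smooth (n,n+1)cusp part2} with $n=4$ to see that $\gamma$ is $\mathcal{A}$-equivalent to $\tilde{\gamma}(s) = (s^4, s^5 + Ts^7) + h_8(s)$ for some $T\in\mathbb{R}$, with $h_8$ flat to order $7$. The computation already recorded in (\ref{align; condition wtr T}) gives $-77B^2+105AD+60AC = 20901888000\,T$ for this curve, and by Remark \ref{remark:invariant of sign} the sign of $-77B^2+105AD+60AC$ is a coordinate-independent invariant. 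Hence $-77B^2+105AD+60AC > 0$ (resp. $<0$) forces $T>0$ (resp. $T<0$).

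Next I would normalize the positive (resp. negative) value of $T$ to $+1$ (resp. $-1$). If $T>0$, set $s = c\,u$ with $c = T^{1/2}$: then $(s^4,s^5+Ts^7) = (c^4 u^4, c^5 u^5 + T c^7 u^7) = (c^4 u^4, c^5(u^5 + u^7))$ since $Tc^7 = c^5\cdot Tc^2 = c^5 T^2 = c^5$ — wait, I must be careful: $Tc^2 = T\cdot T = T^2$, not $1$; instead take $c$ so that $Tc^2 = 1$, i.e. $c = T^{-1/2}$, giving $(c^4u^4, c^5(u^5+u^7))$, and then rescale each coordinate of the target independently by the linear map $(X,Y)\mapsto(c^{-4}X, c^{-5}Y)$ to reach $(u^4, u^5 + u^7) + (\text{flat-order-}7\text{ remainder})$. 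The case $T<0$ is identical with $c = |T|^{-1/2}$, landing on $(u^4, u^5 - u^7) + (\text{remainder})$. So it remains only to kill the higher-order remainder.

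To eliminate the remaining terms I would prove the analogue of Propositions \ref{prop:(4,5)cusp form} and Lemma \ref{lemma:coeffient of 11} with $(s^4, s^5)$ replaced by $(s^4, s^5 \pm s^7)$: namely that $(s^4, s^5\pm s^7) + N(s)$ is $\mathcal{A}$-equivalent to $(s^4, s^5\pm s^7)$ whenever $N$ is flat to order $7$ and $N^{(11)}(0)=\bm{0}$. The $11$-th order coefficient is again removable by a source reparametrization exactly as in Lemma \ref{lemma:coeffient of 11} (the reparametrization $\phi(s) = s - (\tilde b_{11}/5)s^7 - (\tilde a_{11}/4)s^8$ interacts with the new $s^7$ term only in orders $>11$, so the argument goes through unchanged after re-reading the leading term as $s^5\pm s^7$). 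Then Lemma \ref{lem:General Whintney lemma} (the iterated Whitney decomposition) writes $N(s) = \sum_{l=1}^{4} s^{l-1}N_l(s^4)$, and since $N$ is flat to order $7$ with vanishing $11$-th jet the $N_l$ satisfy $N_i(s) = s^2\tilde N_i(s)$ $(i=1,2,3)$, $N_4(s) = s^3\tilde N_4(s)$, so $N$ is absorbed by the target diffeomorphism $\Psi(X,Y) = (X,Y) + X^2\tilde N_1(X) + XY\tilde N_2(X) + Y^2\tilde N_3(X) + Y^3\tilde N_4(X)$ applied along $(s^4, s^5\pm s^7)$, since $\psi(s^4, s^5\pm s^7) = N(s) + (\text{higher flat terms already absorbed})$ — here one uses that $Y = s^5\pm s^7$ still has leading order $5$, so $X^aY^b$ evaluated at the curve has order $4a+5b$, the same monomial orders as before, and the $\pm s^7$ corrections only push things to strictly higher order. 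Part (3) is then immediate: $Cusp_{(4,5;+7)}$ satisfies $-77B^2+105AD+60AC>0$ while $Cusp_{(4,5;-7)}$ satisfies $-77B^2+105AD+60AC<0$, and by Remark \ref{remark:invariant of sign} an $\mathcal{A}$-equivalence would preserve this sign, a contradiction.

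The main obstacle I anticipate is verifying the claim that the target diffeomorphism $\Psi$ genuinely absorbs $N$ when the reference curve is $(s^4, s^5\pm s^7)$ rather than $(s^4, s^5)$: one must check that the substitution $Y = s^5\pm s^7$ does not create new low-order monomials that fail to match the ansatz for $\psi$, i.e. that the orders $4a+5b$ governing the elimination are unaffected and the parity/divisibility structure from Whitney's lemma still applies. This is a bookkeeping argument on monomial orders and should work because $7 = 5 + 2$ keeps everything within the lattice generated once we allow the flat remainder to grow, but it is the step that needs genuine care rather than a reference.
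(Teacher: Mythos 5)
Your overall route coincides with the paper's: reduce to $(s^4,s^5+Ts^7)+h_8(s)$ via Corollary~\ref{lem:normal form of smooth (n,n+1)cusp part2}, read off $\operatorname{sgn}T$ from $(\ref{align; condition wtr T})$, normalize $T$ to $\pm1$ by the rescaling $s\mapsto s/p$, $(X,Y)\mapsto(p^4X,p^5Y)$ with $p=|T|^{1/2}$, and deduce (3) from Remark~\ref{remark:invariant of sign}. The one place you go beyond the paper is the elimination of the flat remainder sitting on top of $(s^4,s^5\pm s^7)$: the paper's proof simply writes $L\circ\tilde\gamma(s/p)=(s^4,s^5+s^7)$ and discards $h_8(s/p)$, so your instinct that an analogue of Proposition~\ref{prop:(4,5)cusp form} and Lemma~\ref{lemma:coeffient of 11} is still needed here is correct.

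However, the absorption argument you sketch for that step breaks down at a concrete point, exactly the one you flagged. Along the curve one has $XY=s^4(s^5\pm s^7)=s^9\pm s^{11}$, so with $\psi(X,Y)=X^2\tilde N_1(X)+XY\tilde N_2(X)+Y^2\tilde N_3(X)+Y^3\tilde N_4(X)$ one gets
\begin{equation*}
\psi(s^4,s^5\pm s^7)=N(s)\pm s^{11}\tilde N_2(s^4)+(\text{terms of order}\ \geq 12),
\end{equation*}
and $\tilde N_2(0)$ is the coefficient of $s^9$ in $N$, which is generically nonzero (for $\gamma(s)=(s^4,s^5+s^7+s^9)$ one has $N(s)=(0,s^9)$ and $\Psi(s^4,s^5+s^7)=(s^4,s^5+s^7+s^9+s^{11})\neq\gamma(s)$). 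Since $11\notin\{4p+5q:\,p,q\geq0\}$, this leftover $s^{11}$ term cannot be removed by any further target diffeomorphism; ``the $\pm s^7$ corrections only push things to strictly higher order'' is true but insufficient, because order $11$ is higher yet is precisely the obstructed order. The repair requires interleaving a source reparametrization $s\mapsto s+cs^7$ (which contributes $5cs^{11}$ to the second component at the cost of a harmless $4cs^{10}$ in the first and terms of order $\geq13$), after which all remaining orders lie in the semigroup generated by $4$ and $5$; but the resulting alternation of source and target changes is an infinite process, so you must also close it off, e.g.\ by invoking finite $\mathcal{A}$-determinacy of $(t^4,t^5\pm t^7)$ (a simple germ of type $W_{12}$ in Fact~\ref{fact:simple germs}) or by an explicit jet-by-jet induction ending in a determinacy estimate. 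As written, neither your proposal nor the paper supplies this step.
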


\begin{proof}
By Corollary \ref{lem:normal form of smooth (n,n+1)cusp part2}, $\gamma$ satisfies $\gamma'(0) = \gamma''(0) = \gamma'''(0) = \bold{0}$ and $A \neq 0$ if and only if there exists a real number $T$ such that $\gamma$ is $\mathcal{A}$-equivalent to $\tilde{\gamma}(s) = (s^4 , s^{5} + Ts^{7}) + h_{8}(s)$ at $s=0$, where $h_{8}$ is a $\mathbb{R}^2$-valued $C^{\infty}$ function satisfying $h_{8}(0) = h_{8}'(0) \cdots = h_{8}^{(7)}(0) = \bold{0}$. We note that $T > 0$ (resp. $T < 0$) if and only if $-77B^2 + 105AD + 60AC > 0$ (resp. $-77B^2 + 105AD + 60AC < 0$) by (\ref{align; condition wtr T}).
\begin{itemize}
\item[(1)] By the above discussion, $\gamma$ satisfies $\gamma'(0) = \gamma''(0) = \gamma'''(0) = \bold{0}$, $A \neq 0$ and $-77B^2 + 105AD + 60AC > 0$ if and only if there exists a positive real number $T$ such that $\gamma$ is $\mathcal{A}$-equivalent to a $\tilde{\gamma}(s) = (s^4 , s^{5} + Ts^{7}) + h_{8}(s)$ at $s=0$. We set $p = \sqrt{T}$ and 
\begin{equation*}
L = \begin{pmatrix}{}
p^4 &0\\
 0 & p^5 \\
\end{pmatrix}.
\end{equation*}
Then we see that 

\begin{equation*}
L \circ \tilde{\gamma} \Bigl( \dfrac{s}{p} \Bigr)  =  (s^4, s^5 + s^7) = Cusp_{(4, 5; +7)}(s). 
\end{equation*}

If $\gamma$ is $\mathcal{A}$-equivalent to $Cusp_{(4, 5, +7)}$ at $t=0$, then $\gamma$ satisfies $\gamma'(0) = \gamma''(0) = \gamma'''(0) = \bold{0}$, $A \neq 0$ and $-77B^2 + 105AD + 60AC > 0$ by Proposition \ref{prop:coordinate-condition01} and Remark \ref{remark:invariant of sign}. 

\item[(2)]By a similar argument to (1), we get the assertion.

\item[(3)] Using Remark \ref{remark:invariant of sign}, we immediately have the conclusion. 
\end{itemize}
\end{proof}

Summarizing Theorem \ref{thm:C1-diffeo-classification},  Theorem \ref{thm:criterion for (4, 5)cusp} and Theorem \ref{thm:criterion of other type} by using $\kappa_{q}$, we obtain the following. 

\begin{theorem}\label{thm: complete classifications of (4, 5)cusps }
Let $\gamma : I \rightarrow \mathbb{R}^2$ be a $C^{\infty}$ curve.

\begin{itemize}
\item[(1)] $\gamma$ is $\mathcal{A}$-equivalent to $Cusp_{(4, 5)}(t) = (t^4, t^5)$ at $t=0$ if and only if $\gamma'(0) = \gamma''(0) = \gamma'''(0) = \bold{0}$, $A \neq 0$ and $\kappa_{q} = 0$.
\item[(2)] $\gamma$ is $\mathcal{A}$-equivalent to $Cusp_{(4, 5; +7)}(t) = (t^4, t^5 + t^7)$ at $t=0$ if and only if $\gamma'(0) = \gamma''(0) = \gamma'''(0) = \bold{0}$, $A \neq 0$ and $\kappa_{q} > 0$.
\item[(3)]$\gamma$ is $\mathcal{A}$-equivalent to $Cusp_{(4, 5; -7)}(t) =(t^4, t^5 - t^7)$ at $t=0$ if and only if $\gamma'(0) = \gamma''(0) = \gamma'''(0) = \bold{0}$, $A \neq 0$ and $\kappa_{q} < 0$.
\item[(4)] $Cusp_{(4, 5)}$, $Cusp_{(4, 5; +7)}$ and $Cusp_{(4, 5; -7)}$ are not $\mathcal{A}$-equivalent at $t=0$. However, they are $C^{1}$-equivalent at $t=0$.
\end{itemize}
\end{theorem}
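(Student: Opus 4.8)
The plan is to derive all four assertions from Theorem~\ref{thm:C1-diffeo-classification}, Theorem~\ref{thm:criterion for (4, 5)cusp} and Theorem~\ref{thm:criterion of other type}; the only genuinely new ingredient is a short dictionary between the polynomial $-77B^{2}+105AD+60AC$ and the $(4,5;\pm7)$-cuspidal curvature $\kappa_{q}$. First I would record the elementary remark that, under the standing hypotheses $\gamma'(0)=\gamma''(0)=\gamma'''(0)=\0$ and $A\neq0$, the vector $\gamma^{(4)}(0)$ is automatically nonzero, because $A=\det(\gamma^{(5)}(0),\gamma^{(4)}(0))$. Hence $\|\gamma^{(4)}(0)\|^{5}>0$, the number $\kappa_{q}$ of Definition~\ref{definition:new (4,5) cusipdal} is well defined, and dividing by this positive quantity affects neither vanishing nor sign, so
\[
\kappa_{q}=0 \iff -77B^{2}+105AD+60AC=0 ,
\]
and, more generally, $\sgn(\kappa_{q})=\sgn\!\left(-77B^{2}+105AD+60AC\right)$.

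With this dictionary, parts (1)--(3) become immediate reformulations: (1) is Theorem~\ref{thm:criterion for (4, 5)cusp} with the equation $(\ref{eqn:(4,5)cusp criterion condition})$ rewritten as $\kappa_{q}=0$; (2) and (3) are parts (1) and (2) of Theorem~\ref{thm:criterion of other type} with the inequalities $-77B^{2}+105AD+60AC>0$ and $<0$ rewritten as $\kappa_{q}>0$ and $\kappa_{q}<0$.

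For part (4) I would argue by a trichotomy on $\kappa_{q}$. Each of the three model curves satisfies the derivative conditions and $A\neq0$ (a short computation gives $\gamma^{(4)}(0)=(24,0)$ and $\gamma^{(5)}(0)=(0,120)$ for all three, so $\det(\gamma^{(4)}(0),\gamma^{(5)}(0))\neq0$), and each satisfies exactly one of the mutually exclusive conditions $\kappa_{q}=0$, $\kappa_{q}>0$, $\kappa_{q}<0$: indeed $Cusp_{(4,5)}$ is $\mathcal{A}$-equivalent to itself, hence by (1) has $\kappa_{q}=0$, while $Cusp_{(4,5;+7)}$ and $Cusp_{(4,5;-7)}$ have $\kappa_{q}>0$ and $\kappa_{q}<0$ by (2) and (3). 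If two of the three were $\mathcal{A}$-equivalent, then transitivity of $\mathcal{A}$-equivalence together with (1)--(3) would place both of them in the same case of this trichotomy, which is impossible; thus the three normal forms are pairwise non-$\mathcal{A}$-equivalent. (The inequivalence of the two $(4,5;\pm7)$-models is also exactly Theorem~\ref{thm:criterion of other type}(3).) On the other hand, since each of the three satisfies $(\ref{align:C1-diffeo-classification01})$ and $(\ref{align:C1-diffeo-classification02})$, Theorem~\ref{thm:C1-diffeo-classification} shows that each is $C^{1}$-equivalent to $Cusp_{(4,5)}$; as $C^{1}$-equivalence is an equivalence relation, the three are $C^{1}$-equivalent to one another.

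The proof is essentially bookkeeping on top of the earlier theorems, so I do not anticipate a serious obstacle. The only point deserving a line of care is the reduction ``$A\neq0$ forces $\gamma^{(4)}(0)\neq\0$'': this is what makes $\kappa_{q}$ well defined and its sign tied to that of $-77B^{2}+105AD+60AC$ under the standing hypotheses, and hence what guarantees that the three cuspidal-curvature conditions in (1)--(3) are genuinely exhaustive and mutually exclusive.
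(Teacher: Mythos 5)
Your proposal is correct and follows exactly the route the paper intends: the paper states this theorem as a direct summary of Theorems \ref{thm:C1-diffeo-classification}, \ref{thm:criterion for (4, 5)cusp} and \ref{thm:criterion of other type}, and your only additions (that $A\neq 0$ forces $\gamma^{(4)}(0)\neq\bold{0}$ so that $\kappa_{q}$ is well defined with the same sign as $-77B^2+105AD+60AC$, and the trichotomy argument for part (4)) are precisely the bookkeeping the paper leaves implicit. No gaps.
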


\begin{remark}
$Cusp_{(4, 5; +7)}$ and $Cusp_{(4, 5; -7)}$ are complex-analytically equivalent at $t = 0$. In fact, we set a matrix

\begin{equation*}
\mathcal{L} = \begin{pmatrix}{}
1 &0\\
 0 & - i \\
\end{pmatrix}, \quad  ( i = \sqrt{-1} ). 
\end{equation*}
Then we see that 

\begin{equation*}
\mathcal{L} \circ Cusp_{(4, 5; -7)}(i s) = (s^4, s^5 + s^7) = Cusp_{(4, 5; +7)}(s). 
\end{equation*}

\end{remark}

\section{\bf Examples}\label{sec: example of (4,5)-cusp}

\subsection{Classifications for $(4,5)$-cusps}

\begin{itemize}
\item[(1)] $Cusp_{(4, 5)}(t)$, $p_{1}(t) = (t^4 + t^7, t^5 )$ and $p_{2}(t) = (t^4 - t^7, t^5 )$ are $\mathcal{A}$-$equivalent$ at $t=0$ by Theorem \ref{thm: complete classifications of (4, 5)cusps } $(1)$.

\item[(2)] $Cusp_{(4, 5; +7)}(t)$, $q_{1}(t) =  (t^4 + t^7, t^5 + t^7)$, $q_{2}(t) =  (t^4 - t^7, t^5 + t^7)$ and $q_{3}(t) =  (t^4 - t^6, t^5)$ are $\mathcal{A}$-$equivalent$ at $t=0$ by Theorem \ref{thm: complete classifications of (4, 5)cusps } $(2)$.

\item[(3)] $Cusp_{(4, 5; -7)}(t)$, $c_{1}(t) = (t^4 + t^7, t^5 - t^7)$, $c_{2}(t) = (t^4 - t^7, t^5 - t^7)$, $c_{3}(t) = (t^4, t^5 + t^6)$, $c_{4}(t) = (t^4, t^5 - t^6)$, and $c_{5}(t) = (t^4 + t^6, t^5)$
are $\mathcal{A}$-$equivalent$ at $t=0$ by Theorem \ref{thm: complete classifications of (4, 5)cusps } $(3)$.
\end{itemize}

\subsection{Evolutes of the $(4,5)$-cusp}

Let $\gamma(t) = (t^4, t^5)$ be the $(4,5)$-cusp. Since $\nu(t) = \dfrac{1}{\sqrt{16 + 25t^2}} (-5t, 4)$ and $\mu(t) = \dfrac{-1}{\sqrt{16 + 25t^2}} (4, 5t)$, we have 

\begin{align}
l(t) = \dfrac{20}{16 + 25t^2} \quad \beta(t) = -t^3 \sqrt{16 + 25t^2}. \label{align:curvarture001}
\end{align}
By using (\ref{align:the n-th evolutes of fronts}) and (\ref{align:curvarture001}), we get the representations of the evolute, the second evolute and the third evolute of $\gamma$ as follows.

\begin{align*}
Ev(\gamma)(t) &= \Bigl( -3 t^4 - \dfrac{25}{4} t^6, \dfrac{16}{5}t^3 + 6 t^5 \Bigl), \\
Ev(Ev(\gamma))(t) &= \Bigl( - \dfrac{192}{25}t^2 - 39 t^4 - \dfrac{175}{4}t^6, -\dfrac{32}{5}t^3 - 39 t^5 - \dfrac{375}{8}t^7 \Bigl), \\
Ev(Ev(Ev(\gamma)))(t) &= \Bigl(  \dfrac{192}{25}t^2 + 141 t^4 + \dfrac{925}{2}t^6 + \dfrac{13125}{32}t^8, -\dfrac{1536}{125}t -\dfrac{752}{5}t^3 -444 t^5 - 375 t^7 \Bigl). 
\end{align*}

\begin{figure}[htbp]
\begin{center}
\begin{tabular}{cc}
\begin{minipage}{0.3\hsize}
\begin{center}
\includegraphics[clip,scale=0.35,bb=0 0 467 465]{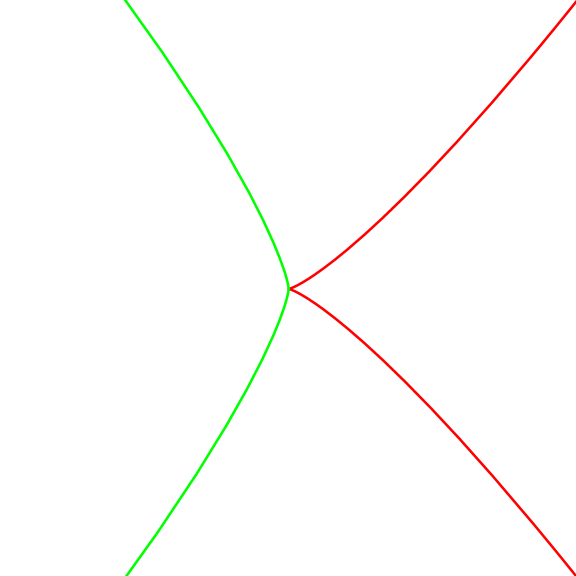}
\caption{}
The red curve is the $(4,5)$ cusp and the green curve is $Ev(\gamma)(t)$.
\end{center}
\end{minipage}

\begin{minipage}{0.3\hsize}
\begin{center}
\includegraphics[clip,scale=0.35,bb=0 0 323 459]{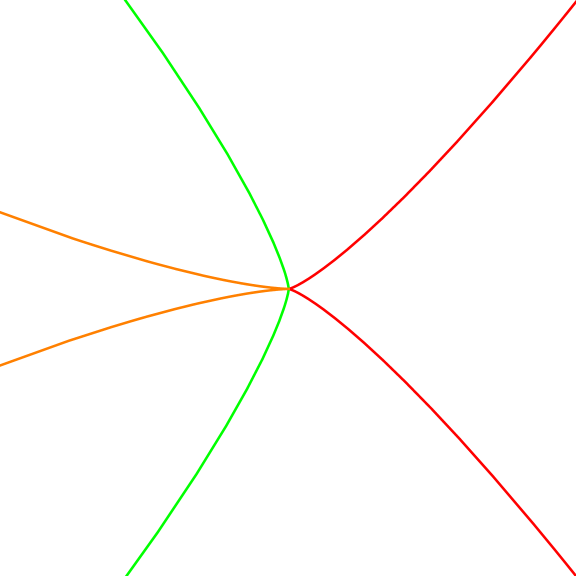}
\caption{}
The orange curve is $Ev(Ev(\gamma))$.
\end{center}
\end{minipage}

\begin{minipage}{0.3\hsize}
\begin{center}
\includegraphics[clip,scale=0.34,bb=0 0 323 459]{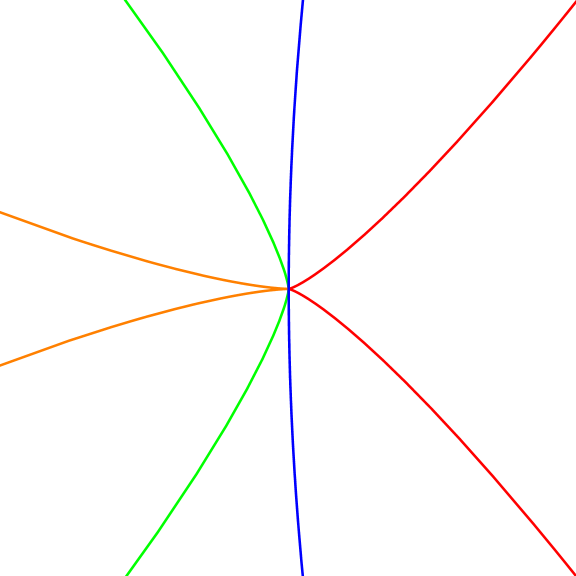}
\caption{}
The blue curve is $Ev(Ev(Ev(\gamma)))$. 
\end{center}
\end{minipage}

\end{tabular}
\end{center}
\end{figure} 

\subsection*{Acknowledgements}
The author would like to express sincere gratitude to Professors Miyuki Koiso and Atsufumi Honda for fruitful discussions and comments.

\newpage

\end{document}